\documentclass{amsart}

\makeatletter
 \def\@textbottom{\vskip \z@ \@plus 1pt}
 \let\@texttop\relax
\makeatother

\sloppy
\usepackage{a4wide}
\usepackage{tikz}
\usepackage{color,eucal,enumerate,mathrsfs}
\usepackage[normalem]{ulem}
\usepackage{amsmath}
\usepackage{amssymb,epsfig,bbm}
\usepackage[colorlinks,citecolor=green,linkcolor=red]{hyperref}
\usepackage{hyphenat}
\hyphenation{pa-ra-me-ter me-tric mea-su-ra-ble mo-du-les ca-no-ni-cal i-so-me-tric}

\numberwithin{equation}{section}

\usepackage[latin1]{inputenc}

%

\newcommand{\N}{\mathbb{N}}

\newcommand{\R}{\mathbb{R}}
\newcommand{\Z}{\mathbb{Z}}


\newcommand{\mm}{{\mbox{\boldmath$m$}}}









\newcommand{\sfd}{{\sf d}}


\newcommand{\Kliminf}{K\kern-3pt-\kern-2pt\mathop{\rm lim\,inf}\limits}  
\renewcommand{\d}{{\mathrm d}}

\newcommand{\restr}[1]{\lower3pt\hbox{$|_{#1}$}}
\newcommand{\la}{{\langle}}                  
\newcommand{\ra}{{\rangle}}
\newcommand{\eps}{\varepsilon}  
\newcommand{\nchi}{{\raise.3ex\hbox{$\chi$}}}
\newcommand{\weakto}{\rightharpoonup}

\setlength{\marginparwidth}{3cm}

\newcommand{\limi}{\varliminf}
\newcommand{\lims}{\varlimsup}

\newcommand{\fr}{\penalty-20\null\hfill$\blacksquare$}                      



\newcommand{\X}{{\rm X}}

\renewcommand{\mm}{\mathfrak m}                                


\renewcommand{\Cap}{{\rm Cap}}
\newcommand{\Cqc}{\mathcal{QC}(\X)}
\newcommand{\qu}{\mathcal{QU}}
\newcommand{\qcr}{{\sf QCR}}
\newcommand{\Cqcvf}{\mathcal{QC}(T\X)}
\newcommand{\tCqcvf}{{\widetilde{\mathcal{QC}}}(T\X)}
\newcommand{\RCD}{{\sf RCD}}
\newcommand{\ncRCD}{{\sf ncRCD}}

\newtheorem{theorem}{Theorem}[section]

\newtheorem{lemma}[theorem]{Lemma}
\newtheorem{proposition}[theorem]{Proposition}

\newtheorem{definition}[theorem]{Definition}

\newtheorem{example}[theorem]{Example}
\newtheorem{remark}[theorem]{Remark}

\newcommand{\beq}{\begin{equation}}
\newcommand{\eeq}{\end{equation}}

\linespread{1.15}

\setcounter{tocdepth}{2}

\title{Quasi-continuous vector fields on $\sf RCD$ spaces}

\author{Cl\'{e}ment Debin}
\address{Institut Polytechnique de Grenoble (CPP - La Pr\'epa des INP), Domaine universitaire, 701 rue de la piscine, 38402 Saint Martin d'H\`eres.}
\email{clement.debin@gmail.com}

\author{Nicola Gigli}
\address{SISSA, Via Bonomea 265, 34136 Trieste}
\email{ngigli@sissa.it}

\author{Enrico Pasqualetto}
\address{Department of Mathematics and Statistics,
P.O.\ Box 35 (MaD), FI-40014 University of Jyv\"{a}skyl\"{a}}
\email{enpasqua@jyu.fi}

\date{\today}

\begin{document}
\begin{abstract} In the existing language for tensor calculus on ${\sf RCD}$ spaces, tensor fields are only defined $\mm$-a.e.. In this paper we introduce the concept of tensor field defined `2-capacity-a.e.' and discuss in which sense Sobolev vector fields have a 2-capacity-a.e.\ uniquely defined quasi-continuous representative.
\end{abstract}
\maketitle
\tableofcontents
\section*{Introduction}

The theory of differential calculus on ${\sf RCD}$ spaces as proposed in \cite{Gigli14,Gigli17} is built around the notion of $L^0(\mm)$-normed $L^0(\mm)$-module, which provides a convenient abstraction of the concept of `space of measurable sections of a vector bundle'. In this sense, one thinks at such a module as the space of measurable sections of some, not really given, measurable bundle over the given metric measure space $(\X,\sfd,\mm)$. More precisely, given that elements of $L^0(\mm)$ are `Borel functions identified up to equality $\mm$-a.e.', elements of such modules are, in a sense, `measurable sections identified up to equality $\mm$-a.e.'. Notice that this interpretation is fully rigorous in the smooth case, where given a normed vector bundle, the space of its Borel sections identified up to $\mm$-a.e.\ equality is  a $L^0(\mm)$-normed $L^0(\mm)$-module. We also remark that in  \cite{Gigli14,Gigli17} the emphasis is more on the notion of $L^\infty(\mm)$-module rather than on $L^0(\mm)$ ones, but this is more a choice of presentation rather than an essential technical point, and given that for the purposes of this manuscript to work with $L^0$ is more convenient, we shall concentrate on this.

In particular, all the tensor fields on a metric measure space which are considered within the theory of $L^0(\mm)$-modules are only defined $\mm$-a.e.. While this is an advantage in some setting, e.g.\ because a rigorous first order differential calculus can be built on this ground over arbitrary metric measure structures, in others is quite limiting: there certainly might be instances where, say, one is interested in the behaviour of a vector field on some negligible set. For instance, the question of whether the critical set of a harmonic function has capacity zero simply makes no sense if the gradient of such function only exists as element of a $L^0(\mm)$-normed module.

Aim of this paper is to create a theoretical framework which allows to speak about `Borel sections identified up to equality ${\rm Cap}$-a.e.' and to show that Sobolev vector fields on ${\sf RCD}$ spaces, which are introduced via the theory of $L^0(\mm)$-modules, in fact can be defined up to ${\rm Cap}$-null sets and turn out to be continuous (in a sense to be made precise) outside sets of small capacities. Here the analogy is with the well-known case of Sobolev functions on the Euclidean space: these are a priori defined in a distributional-like sense, and thus up to equality $\mathcal L^d$-a.e., but once the concept of capacity is introduced one quickly realizes that a Sobolev function has a uniquely-defined representative up to ${\rm Cap}$-null sets which is continuous outside sets of small capacities.

\bigskip

More in detail, in this paper we do the following:
\begin{itemize}
\item[o)] We start recalling how to integrate w.r.t.\ an outer measure and that such integral is sublinear iff the outer measure is submodular, which is the case of capacity. This will allow to put a natural complete distance on the space $L^0(\Cap)$ of real-valued Borel functions on $\X$ identified up to $\Cap$-null sets. Given that $\Cap$-null sets are $\mm$-null, $L^0(\mm)$ can be seen as quotient of  $L^0(\Cap)$; we shall denote by $\Pr:L^0(\Cap)\to L^0(\mm)$ the quotient map.

We then recall the concept of quasi-continuous function which, being invariant under modification on $\Cap$-null sets, is a property of (equivalence classes of) functions in $L^0(\Cap)$. The space $\Cqc$ of quasi-continuous functions is actually a closed subspace of $L^0(\Cap)$ and coincides with the $L^0(\Cap)$-closure of continuous functions (then by approximation in the uniform norm one easily sees that in proper spaces one could also take the completion of the space of locally Lipschitz functions and, in $\R^d$, of smooth ones); we believe that this characterization of quasi-continuity is well-known in the literature but have not been able to find a reference -- in any case, for completeness in the preliminary section we provide full proofs of all the results we need. In connection with the concept of capacity the space $\Cqc$ is relevant for at least two reasons:
\begin{itemize}
\item[a)] The restriction of the projection operator $\Pr:L^0(\Cap)\to L^0(\mm)$ to $\Cqc$ is injective.
\item[b)] Any Sobolev function $f\in W^{1,2}(\X)\subset L^0(\mm)$ has a (necessarily unique, by a) above) quasi-continuous representative $\tilde f\in \Cqc$, i.e.\  $\Pr(\Cqc)\supset W^{1,2}(\X)$.
\end{itemize}

\item[i)] We propose the notion of $L^0({\rm Cap})$-normed $L^0({\rm Cap})$-modules ($L^0(\Cap)$-modules, in short), defined by properly imitating the one of $L^0(\mm)$-module. At the technical level an important difference between the two notions is that the capacity is only an outer measure: while in some cases this is only a nuisance (see e.g.\ the proof of the fact that the natural distance on $L^0(\Cap)$-modules satisfies the triangle inequality), in others it creates problems whose solution is unclear to us (e.g.\ in defining the dual of a $L^0({\rm Cap})$-module -- see Remark \ref{rmk:no_dual}).

We then see that, much like starting from $L^0(\Cap)$ and quotienting out up to $\mm$-null sets we find $L^0(\mm)$, starting from an arbitrary $L^0(\Cap)$-module $\mathscr M$ and quotienting via the relation
\begin{equation}\label{eq:eqrel}
v\sim w\quad\text{ provided }\Pr\big(|v-w|\big)=0\;\;\;\mm\text{-a.e.,}
\end{equation}
we produce a canonical $L^0(\mm)$-module $\mathscr M_\mm$ and  projection operator ${\rm Pr}_{\mathscr M}:\mathscr M \to \mathscr M_\mm$ (see Proposition \ref{pr:pr}).

\item[ii)] The main construction that we propose in this manuscript is that of tangent $L^0({\rm Cap})$-module $L^0_\Cap(T\X)$ on an ${\sf RCD}$ space $\X$. Specifically, in such setting we prove that there is a canonical couple $\big(L^0_\Cap(T\X),\bar\nabla\big)$, where $L^0_\Cap(T\X)$ is a  $L^0({\rm Cap})$-module and $\bar\nabla:\,{\rm Test}(\X)\to L^0_\Cap(T\X)$ is a linear map whose image generates $L^0_\Cap(T\X)$  and such that $|\bar\nabla f|$ coincides with the unique quasi-continuous representative of the minimal weak upper gradient $|Df|$ of $f$, see Theorem \ref{thm:tg_mod}. Here the space ${\rm Test}(\X)$ of test functions is made, in some sense, of the smoothest functions available on ${\sf RCD}$ spaces; this regularity matters in the definition of $L^0_\Cap(T\X)$ to the extent that  $|D f|$ belongs to $W^{1,2}(\X)$ whenever $f\in {\rm Test}(\X)$ (and this fact is in turn highly depending on the lower Ricci curvature bound: it seems hard to find many functions with this property on more general metric measure spaces).

The relation between $(L^0_\Cap(T\X),\bar\nabla)$ and the already known $L^0(\mm)$-tangent module $L^0_\mm(T\X)$ and gradient operator $\nabla$ is the fact that $L^0_\mm(T\X)$ can be seen as the quotient of $L^0_\Cap(T\X)$ via the equivalence relation \eqref{eq:eqrel}, where the projection operator  sends $\bar\nabla f$ to $\nabla f$ for any $f\in {\rm Test}(\X)$ (see Propositions \ref{pr:prtan}, \ref{pr:prtan2} for the precise formulation).

\item[iii)] We define the notion of `quasi-continuous vector field' in $L^0_\Cap(T\X)$. Here a relevant technical point is that there is no topology on the `tangent bundle' or, to put it differently, it is totally unclear what it means for a tangent vector field to be continuous or continuous at a point (in fact, not even the value of a vector field at a point is defined in our setting!). In this direction we also remark that the recent result in \cite{DPZ19} suggests that it might be pointless to look for `many' continuous vector fields already on finite dimensional Alexandrov spaces, thus a fortiori on ${\sf RCD}$ ones. 

Thus, much like in $\R^d$ quasi-continuous functions are the $L^0(\Cap)$-closure of smooth ones, we define the space of quasi-continuous vector fields $\Cqcvf$ as the $L^0_\Cap(T\X)$-closure of the space of the `smoothest' vector fields available, i.e.\ linear combinations of those of the form  $g\tilde\nabla f$ for $f,g\in {\rm Test}(\X)$. The choice of terminology is justified by the fact that the analogue of a), b) above hold (see Proposition \ref{prop:inj_bar_Pi} and Theorem \ref{thm:qcrvf}) and, moreover:
\begin{itemize}
\item[c)] for $v\in \Cqcvf$ we have $|v|\in\Cqc$ (see Proposition \ref{lem:|v|_qc}).
\end{itemize}
\end{itemize}

\bigskip

We conclude by pointing out that, while the concept of $L^0(\Cap)$-module makes sense for any $p$-capacity, for our purposes only the case $p=2$ is relevant. This is due to the fact that the natural Sobolev space to which $|Df|$ belongs for $f\in {\rm Test}(\X)$ is $W^{1,p}(\X)$ with $p=2$. Also, we remark that, albeit the definitions proposed in this paper are meant to be used in actual problems regarding the structure of ${\sf RCD}$ spaces (like the already mentioned one concerning the size of $\{\nabla f=0\}$ for $f$ harmonic - see Example \ref{ex:1d} for comments in this direction), in this manuscript we concentrate on building a solid foundation of the theoretical side of the story. The added value here is in providing what we believe are the correct definitions: once these are given, proofs of relevant results will come out rather easily.

\bigskip

\noindent{\bf Acknowledgments} This research has been supported by the MIUR SIR-grant `Nonsmooth Differential Geometry' (RBSI147UG4).

The authors want to thank Elia Bru\`e and Daniele Semola for stimulating conversations on the topics of this manuscript.

\section{Preliminaries}\label{s:preliminaries}
\subsection{Integration w.r.t.\ outer measures}\label{ss:integration_outer_measures}
Let $\X$ be a given set and $\mu$ an outer measure on $\X$.
Then for every function $f:\,\X\to[0,+\infty]$ (no measurability assumption is made here)
it holds that the function $[0,\infty)\ni t\mapsto\mu\big(\{f>t\}\big)\in[0,+\infty]$
is non-increasing and thus Lebesgue measurable. Hence the following
definition (via \emph{Cavalieri's formula})
\begin{equation}\label{eq:def_int}
\int f\,\d\mu:=\int_0^{+\infty}\mu\big(\{f>t\}\big)\,\d t.
\end{equation}
is well-posed. For an arbitrary set $E\subset\X$ we shall also put
\begin{equation}
\int_E f\,\d\mu:=\int\nchi_E\,f\,\d\mu.
\end{equation}

In the next result we collect the basic properties of the above-defined integral:
\begin{proposition}[Basic properties of $\mu$ and integrals w.r.t.\ it]
\label{prop:properties_mu}
The following properties hold:
\begin{itemize}
\item[a)] Let $f,g:\,\X\to[0,+\infty]$ be fixed. Then the following holds:
\begin{itemize}
\item[i)] $\int f\,\d\mu\leq\int g\,\d\mu$ provided $f\leq g$.
\item[ii)] $\int\lambda f\,\d\mu=\lambda\int f\,\d\mu$ for every $\lambda>0$.
\item[iii)] $\int f\,\d\mu=0$ if and only if $\mu\big(\{f\neq 0\}\big)=0$.
\item[iv)] $\int f\,\d\mu=\int g\,\d\mu$ provided $\mu\big(\{f\neq g\}\big)=0$.
\item[v)] \underline{\v{C}eby\v{s}\"{e}v's inequality.} It holds that
\[\mu\big(\{f\geq\lambda\}\big)\leq\frac{1}{\lambda}\int_{\{f\geq\lambda\}}f\,\d\mu
\quad\text{ for every }\lambda>0.\]
In particular, if $\int f\,\d\mu<+\infty$ then $\mu\big(\{f=+\infty\}\big)=0$.
\end{itemize}
\item[b)] \underline{Monotone convergence.} Let $f,f_n:\,\X\to[0,+\infty]$,
$n\in\N$, be such that
\[\mu\big(\{x\in\X\,:\,f_n(x)\not\,\uparrow f(x)\}\big)=0.\]
Then $\int f_n\,\d\mu\to\int f\,\d\mu$ as $n\to\infty$.
\item[c)] \underline{Borel-Cantelli.} Let $(E_n)_{n\in\N}$ be subsets of $\X$
satisfying $\sum_{n\in\N}\mu(E_n)<+\infty$.
Then it holds that $\mu\big(\bigcap_{n\in\N}\bigcup_{m\geq n}E_m\big)=0$.
\end{itemize}
\end{proposition}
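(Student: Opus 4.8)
The plan is to read off parts a) and c) directly from Cavalieri's formula~\eqref{eq:def_int}, and to reduce part b) to the classical monotone convergence theorem on $[0,+\infty)$ equipped with the Lebesgue measure. Apart from b), the only properties of $\mu$ entering the argument are $\mu(\emptyset)=0$, monotonicity and countable subadditivity.

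For a): if $f\le g$ then $\{f>t\}\subseteq\{g>t\}$ for every $t\ge 0$, whence i) follows by monotonicity of $\mu$ and of the Lebesgue integral. For ii) one notes $\{\lambda f>t\}=\{f>t/\lambda\}$ and substitutes $s=t/\lambda$ in~\eqref{eq:def_int}. For iii), write $\{f\ne 0\}=\bigcup_{n\in\N}\{f>1/n\}$: if $\mu(\{f\ne 0\})=0$ then $\mu(\{f>t\})=0$ for every $t>0$ by monotonicity, so $\int f\,\d\mu=0$; conversely, if $\int f\,\d\mu=0$ then the non-increasing map $t\mapsto\mu(\{f>t\})$ is $0$ for a.e.\ $t>0$, hence for every $t>0$, and then $\mu(\{f\ne 0\})\le\sum_n\mu(\{f>1/n\})=0$ by countable subadditivity. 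For iv), the inclusion $\{f>t\}\subseteq\{g>t\}\cup\{f\ne g\}$ together with subadditivity gives $\mu(\{f>t\})\le\mu(\{g>t\})$, and symmetrically the converse, so the two distribution functions coincide and so do the integrals. For v), observe that for $t\in[0,\lambda)$ one has $\{\nchi_{\{f\ge\lambda\}}f>t\}=\{f\ge\lambda\}$; hence, restricting the integral~\eqref{eq:def_int} of $\nchi_{\{f\ge\lambda\}}f$ to $t\in[0,\lambda)$, one obtains $\int_{\{f\ge\lambda\}}f\,\d\mu\ge\lambda\,\mu(\{f\ge\lambda\})$, which is the asserted inequality; combining it with $\int_{\{f\ge\lambda\}}f\,\d\mu\le\int f\,\d\mu$ from i) and with $\{f=+\infty\}\subseteq\{f\ge\lambda\}$, and letting $\lambda\to+\infty$, yields the last statement. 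Part c) is the one-line estimate $\mu\big(\bigcap_{k\in\N}\bigcup_{m\ge k}E_m\big)\le\mu\big(\bigcup_{m\ge n}E_m\big)\le\sum_{m\ge n}\mu(E_m)$, valid for every $n\in\N$, whose right-hand side tends to $0$ as $n\to\infty$ by the summability assumption.

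For b) I would argue as follows. Set $N:=\{x\in\X\,:\,f_n(x)\not\uparrow f(x)\}$, which is $\mu$-null by assumption, and replace $f_n$ and $f$ by $\nchi_{\X\setminus N}f_n$ and $\nchi_{\X\setminus N}f$; by iv) none of the integrals changes, and after this replacement $f_n(x)\uparrow f(x)$ holds for \emph{every} $x\in\X$. For each fixed $t\ge 0$ the sets $\{f_n>t\}$ then increase with $n$ to $\{f>t\}$, so $g_n(t):=\mu(\{f_n>t\})$ increases to $\mu(\{f>t\})$; since every $g_n$ is non-increasing in $t$, hence Lebesgue measurable, the classical monotone convergence theorem on $[0,+\infty)$ gives, as $n\to\infty$, $\int_0^{+\infty}g_n\,\d t\to\int_0^{+\infty}\mu(\{f>t\})\,\d t$, that is $\int f_n\,\d\mu\to\int f\,\d\mu$.

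The step I expect to be the actual obstacle is precisely the assertion $g_n(t)\uparrow\mu(\{f>t\})$, that is, the continuity of $\mu$ from below along increasing sequences of sets: this is the one ingredient of the proposition that is not mere bookkeeping with~\eqref{eq:def_int} and that genuinely has to be invoked, beyond monotonicity and countable subadditivity. For the outer measure relevant in this paper, the $2$-capacity, it is available, and it is where the content of the monotone convergence statement really resides.
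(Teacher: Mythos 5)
Your proof is correct and follows essentially the same route as the paper's: parts a) and c) are read off Cavalieri's formula exactly as in the paper, and part b) is handled by first reducing to the everywhere-monotone case via iv) and then applying the classical monotone convergence theorem to the distribution functions $t\mapsto\mu(\{f_n>t\})$. Your closing observation is well taken: the step $\mu(\{f_n>t\})\uparrow\mu(\{f>t\})$ does require continuity of $\mu$ from below along increasing sequences of sets (it is not a consequence of monotonicity and countable subadditivity alone), the paper's proof of b) uses this silently, and for the relevant outer measure $\Cap$ it is precisely what is established in the proof of Proposition \ref{prop:properties_cap}.
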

\begin{proof}\ \\
\noindent{\bf (a)} $(i)$ is trivial and $(ii)$ follows by a change of variables.
The `if' implication in $(iii)$ is trivial, for the `only if' recall that
$t\mapsto\mu\big(\{f>t\}\big)$ is non-increasing to conclude that if $\int f\,\d\mu=0$
it must hold $\mu\big(\{f>t\}\big)=0$ for every $t>0$.
Then use the countable subadditivity of $\mu$ and the identity
$\{f>0\}=\bigcup_n\{f>1/n\}$ to deduce that $\mu\big(\{f>0\}\big)=0$.
To prove $(iv)$ notice that $\{g>t\}\subset\{f>t\}\cup\{f\neq g\}$,
hence taking into account the subadditivity of $\mu$ and the assumption
we get $\int g\,\d\mu\leq\int f\,\d\mu$. Inverting the roles of $f,g$ we conclude.
In order to prove $(v)$, call $E_\lambda:=\{f\geq\lambda\}$ and
notice that $E_\lambda\subset\{\nchi_{E_\lambda}f>t\}$ for all $t\in(0,\lambda)$, whence
\[\int_{E_\lambda}f\,\d\mu\geq\int_0^\lambda\mu\big(\{\nchi_{E_\lambda}f>t\}\big)\,\d t
\geq\lambda\,\mu(E_\lambda),\]
which proves $\mu(E_\lambda)\leq\lambda^{-1}\int_{E_\lambda}f\,\d\mu$.
For the last statement, notice that if $\int f\,\d\mu<+\infty$ then
\[\mu\big(\{f=+\infty\}\big)\leq\limi_{\lambda\to +\infty}\mu(E_\lambda)
\leq\lim_{\lambda\to +\infty}\frac{1}{\lambda}\int f\,\d\mu=0,\]
so that $\mu\big(\{f=+\infty\}\big)=0$, as required.

\noindent{\bf (b)} Assume for a moment that $f_n(x)\uparrow f(x)$ for every $x\in\X$.
Then the sequence of sets $\{f_n>t\}$ is increasing with respect to $n$ and satisfies
$\bigcup_n\{f_n>t\}=\{f>t\}$. Hence the monotone convergence theorem
(for the Lebesgue measure) gives
\[
\int f\,\d\mu=\int_0^{+\infty}\mu\big(\{f>t\}\big)\,\d t
=\lim_{n\to\infty}\int_0^{+\infty}\mu\big(\{f_n>t\}\big)\,\d t
=\lim_{n\to\infty}\int f_n\,\d\mu.
\]
The general case follows taking into account point $(iv)$.

\noindent{\bf (c)} The standard proof applies:
let us put $E:=\bigcap_{n\in\N}\bigcup_{m\geq n}E_m$, then
$\mu(E)\leq\mu\big(\bigcup_{m\geq n}E_m\big)$ for all $n\in\N$,
so that
\[\mu(E)\leq\limi_{n\to\infty}\mu\Big(\bigcup_{m\geq n}E_m\Big)
\leq\lim_{n\to\infty}\sum_{m\geq n}\mu(E_m)=0,\]
as required.
\end{proof}
\begin{example}\label{ex}{\rm
Consider the closed interval $S:=[0,1]$ in $\R$ (equipped with Euclidean
distance and Lebesgue measure). Given any $n\geq 1$, denote
by $P_n\subseteq S$ the singleton $\{1/n\}$. One can check that
$0<\Cap(P_n)<\Cap(S)$, but $\Cap(S\setminus P_n)=\Cap(S)$.
In other words, $\int\nchi_{S\setminus P_n}\,\d\Cap=\int\nchi_S\,\d\Cap$
and $\Cap\big(\{\nchi_{S\setminus P_n}\neq\nchi_S\}\big)>0$.
This shows that -- even for $\mu:=\Cap$ and $f\leq g$ --
the converse of item iv) of Proposition \ref{prop:properties_mu} fails.
\fr}\end{example}
\begin{remark}{\rm

An analogue of the dominated convergence theorem cannot hold,
as shown by the following counterexample. For any $n\geq 1$,
consider the singleton
$P_n$ in $\R$ defined in Example \ref{ex}.
Since the capacity in the space $\R$
is translation-invariant, one has that $\Cap(P_n)=\Cap(P_1)>0$ for all $n\geq 1$.
Moreover, we have $\lim_n\nchi_{P_n}(x)=0$ for all $x\in\R$
and $\nchi_{P_n}\leq\nchi_{[0,1]}$ for all $n\geq 1$,
with $\int\nchi_{[0,1]}\,\d\Cap=\Cap\big([0,1]\big)<+\infty$.
Nevertheless, it holds that $\int\nchi_{P_n}\,\d\Cap\equiv\Cap(P_1)$
does not converge to $0$ as $n\to\infty$, thus proving the failure
of the dominated convergence theorem. To provide such a counterexample,
we exploited the fact that the capacity is not $\sigma$-additive; indeed,
we built a sequence of pairwise disjoint sets, with the same positive capacity,
which are contained in a fixed set of finite capacity.
The lack of a result such as the dominated convergence theorem explains the
technical difficulties we will find in the proofs of Proposition
\ref{prop:Cauchy_indep_choice} and Theorem \ref{thm:L0_complete}.
\fr}\end{remark}
Let us now introduce a crucial property of outer measures:
\begin{definition}[Submodularity]
We say that $\mu$ is \emph{submodular} provided
\begin{equation}
\mu(E\cup F)+\mu(E\cap F)\leq\mu(E)+\mu(F)\quad\text{ for every }E,F\subset\X.
\end{equation}
\end{definition}
The importance of the above notion is due to the following result
(we refer to \cite[Chapter 6]{denneberg2010non} for a detailed bibliography):
\begin{theorem}[Subadditivity theorem]\label{thm:subadditivity_theorem}
It holds that $\mu$ is submodular if and only if the integral associated to $\mu$
is subadditive, i.e.\
\begin{equation}
\int(f+g)\,\d\mu\leq\int f\,\d\mu+\int g\,\d\mu
\quad\text{ for every }f,g:\,\X\to[0,+\infty).
\end{equation}
\end{theorem}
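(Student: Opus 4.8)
The plan is to prove both implications separately, with the `submodular $\Rightarrow$ subadditive' direction being the substantial one (the converse being essentially a tautological rephrasing). For the easy direction, suppose the integral is subadditive; apply it to $f=\nchi_E$ and $g=\nchi_F$. Since $\nchi_E+\nchi_F=\nchi_{E\cup F}+\nchi_{E\cap F}$ pointwise, part (a)(iv) of Proposition \ref{prop:properties_mu} gives $\int(\nchi_E+\nchi_F)\,\d\mu=\int\nchi_{E\cup F}\,\d\mu+\int\nchi_{E\cap F}\,\d\mu=\mu(E\cup F)+\mu(E\cap F)$, while subadditivity bounds this by $\int\nchi_E\,\d\mu+\int\nchi_F\,\d\mu=\mu(E)+\mu(F)$, which is exactly submodularity.

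For the main direction, assume $\mu$ is submodular and fix $f,g:\X\to[0,+\infty)$. The strategy is the standard `layer-cake' / horizontal-slicing argument: write $f=\int_0^\infty\nchi_{\{f>s\}}\,\d s$ and $g=\int_0^\infty\nchi_{\{g>s\}}\,\d s$, so that the superlevel sets of $f+g$ can be controlled by unions of superlevel sets of $f$ and $g$. Concretely, first I would establish by induction the finite version of submodularity: for sets $E_1,\dots,E_n$,
\[
\mu\Big(\bigcup_{i=1}^n E_i\Big)+\sum_{i=2}^n\mu\Big(E_i\cap\bigcup_{j<i}E_j\Big)\le\sum_{i=1}^n\mu(E_i),
\]
or more usefully the inequality $\sum_{i=1}^n\mu(A_i)\ge\mu\big(\bigcup A_i\big)+\sum_{i=1}^{n-1}\mu\big(A_{i+1}\cap(A_1\cup\cdots\cup A_i)\big)$, which via telescoping yields $\sum_i\mu(A_i)\ge\mu(\bigcup A_i)$ refined with the overlap terms. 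Then the cleanest route is to approximate $f$ and $g$ from below by simple functions taking finitely many values on a common partition of the range (using monotone convergence, Proposition \ref{prop:properties_mu}(b), to pass to the limit), and for simple functions reduce the subadditivity of the integral to a finite sum of instances of the submodularity inequality applied to the relevant superlevel sets. Alternatively — and this is the approach I would actually write out — one shows directly that for every $t>0$,
\[
\int_0^t\mu\big(\{f+g>s\}\big)\,\d s\le\int_0^t\mu\big(\{f>s\}\big)\,\d s+\int_0^t\mu\big(\{g>s\}\big)\,\d s
\]
by a discretization of the $s$-variable into a fine uniform grid, using that $\{f+g>s_1+s_2\}\subseteq\{f>s_1\}\cup\{g>s_2\}$, and then summing the submodularity inequality over the grid and letting the mesh go to zero; passing $t\to\infty$ via monotone convergence finishes the proof.

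The main obstacle is handling the interaction between the horizontal slicing and the mere subadditivity (not $\sigma$-additivity) of the outer measure: one cannot naively add up measures of infinitely many slices, so the argument must be organized through finite grids with an explicit control on the error, and one must be careful that the telescoping/overlap terms in the finite submodularity inequality are all nonnegative and correctly aligned so that in the limit the cross terms vanish rather than accumulate. I would isolate the finite combinatorial inequality as a preliminary lemma, prove it by induction on $n$ using the definition of submodularity directly, and then the passage to the integral is a routine (if slightly delicate) application of monotone convergence and Riemann-sum approximation of the one-dimensional integrals in \eqref{eq:def_int}.
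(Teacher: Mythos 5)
Your easy direction is fine and matches the paper. The problem is with the main implication: the decisive combinatorial step is not actually carried out, and it is precisely where the whole content of the theorem sits. Your scheme is to discretize the level parameter, use $\{f+g>s_1+s_2\}\subseteq\{f>s_1\}\cup\{g>s_2\}$, and ``sum the submodularity inequality over the grid.'' But you never specify how the splittings $s=s_1+s_2$ are chosen across grid levels, nor how the intersection terms $\mu\big(\{f>s_1\}\cap\{g>s_2\}\big)$ produced by submodularity are made to telescope. This matters: if one drops those intersection terms (i.e.\ uses only subadditivity of $\mu$) and takes $s_1=\lambda s$, $s_2=(1-\lambda)s$, the slicing argument only yields $\int(f+g)\,\d\mu\leq\lambda^{-1}\int f\,\d\mu+(1-\lambda)^{-1}\int g\,\d\mu$, and no choice of $\lambda$ gives both constants equal to $1$. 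Since subadditive-but-not-submodular outer measures genuinely fail the conclusion, any correct proof must exploit the intersection terms in an essential, globally organized way; declaring that step ``routine (if slightly delicate)'' is exactly backwards — the limit passages are routine, the alignment of the cross terms is the theorem. Moreover, the finite lemma you propose to isolate, $\mu\big(\bigcup_i E_i\big)+\sum_{i\geq 2}\mu\big(E_i\cap\bigcup_{j<i}E_j\big)\leq\sum_i\mu(E_i)$, while true, is not the inequality needed here: the level set $\{f+g>s\}$ at higher levels is a \emph{union of intersections} $\bigcup_{s_1+s_2=s}\{f>s_1\}\cap\{g>s_2\}$, and the discarded overlap terms in your lemma do not match these sets.

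For comparison, the paper reduces to simple functions and builds a genuine (additive) measure $\nu$ on the finite algebra generated by $f,g$, defined so as to agree with $\mu$ along the chain of level sets of $f+g$; a bubble-sort permutation argument, where submodularity enters once per transposition, shows $\int h\,\d\nu\leq\int h\,\d\mu$ for all $h$ with equality for $h$ comonotone with $f+g$, and the conclusion follows from additivity of $\nu$. If you want to salvage a slicing-style proof closer to your outline, the workable key lemma is different from the one you stated: for a set $S$ and a simple $h=\sum_k\nchi_{H_k}$ with $H_1\supseteq H_2\supseteq\cdots$, one has $\big\{\nchi_S+h\geq k\big\}=H_k\cup(S\cap H_{k-1})$, and applying submodularity to the pair $\big(H_k,\,S\cap H_{k-1}\big)$ gives $\mu\big(H_k\cup(S\cap H_{k-1})\big)\leq\mu(H_k)+\mu(S\cap H_{k-1})-\mu(S\cap H_k)$, whose error terms telescope in $k$ to yield $\int(\nchi_S+h)\,\d\mu\leq\mu(S)+\int h\,\d\mu$; peeling off the indicator layers of $f$ one at a time and invoking monotone convergence then finishes the proof. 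Either route works, but as written your proposal contains neither: it identifies the obstacle correctly and then asserts, rather than demonstrates, that it can be overcome.
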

\begin{proof}
The `if' trivially follows by taking $f:=\nchi_E$ and $g:=\nchi_F$, so we turn to
the `only if'. Notice that, up to a monotone approximation argument based on 
point $(b)$ of Proposition \ref{prop:properties_mu}, it suffices to consider the case
in which $f,g$ assume only a finite number of values and
$\mu\big(\{f>0\}\cup\{g>0\}\big)<\infty$. Then up to replacing $\X$
with $\{f>0\}\cup\{g>0\}$ we can also assume that $\mu$ is finite.

Thus assume this is the case and let $\mathcal A$ be the (finite) algebra generated
by $f,g$, i.e.\ the one generated by the sets $\{f=a\}$ and $\{g=b\}$, $a,b\in\R$.
Let $A_1,\ldots,A_n\in\mathcal A$ be minimal with respect to inclusion among non-empty
sets in $\mathcal A$ and ordered in such a way that $a_i\geq a_{i+1}$ for every
$i=1,\ldots,n-1$, where $a_i:=(f+g)(A_i)$ (i.e.\ $a_i$ is the value of $f+g$ on $A_i$). 

Define a finite measure $\nu$ on $\mathcal A$ by putting 
\begin{equation}\label{eq:defnu}
\nu(A_1\cup\ldots\cup A_i):=\mu(A_1\cup\ldots\cup A_i)\quad\text{ for every }i=1,\ldots,n
\end{equation}
and notice that (since $\mu$ is finite) the measure $\nu$ is well-defined.
We claim that for a $\mathcal A$-measurable function $h:\,\X\to[0,+\infty]$ it holds
\begin{equation}\label{eq:claimnu}
\int h\,\d\mu\geq\int h\,\d\nu\quad
\text{ with equality if }h(A_i)\geq h(A_{i+1})\text{ for every }i=1,\ldots,n-1.
\end{equation}
Notice that once such claim is proved the conclusion easily follows from
$\int(f+g)\,\d\mu=\int(f+g)\,\d\nu$ (from the equality case of the claim \eqref{eq:claimnu}
and the choice of enumeration of the $A_i$'s), the inequalities
$\int f\,\d\nu\leq\int f\,\d\mu$, $\int g\,\d\nu\leq \int g\,\d\mu$ (from the claim
\eqref{eq:claimnu}) and the linearity of the integral w.r.t.\ $\nu$. 

Also, notice that the equality case in \eqref{eq:claimnu} is a direct consequence of
Cavalieri's formula for both $\mu,\nu$, the defining property \eqref{eq:defnu} and the
fact that for $h$ as stated it holds $\{h>t\}=A_1\cup\ldots\cup A_i$ for some $i=i(t)$.

Let us now assume that for some $\bar i$ it holds
$h(A_{\bar i})=:b_{\bar i}\leq b_{\bar i+1}:=h(A_{\bar i+1})$ and let us define
another finite measure $\tilde\nu$ as in \eqref{eq:defnu} with the sets $(A_i)_i$
replaced by $(\tilde A_i)_i$, where $\tilde A_i:= A_i$ for $i\neq \bar i,\bar i+1$,
$\tilde A_{\bar i}:=A_{\bar i+1}$ and $\tilde A_{\bar i+1}:=A_{\bar i}$. We shall prove that 
\begin{equation}\label{eq:toprove}
\int h\,\d\nu\leq \int h\,\d\tilde\nu
\end{equation}
and notice that this will give the proof, as with a finite number of such permutations
the sets $(A_i)_i$ get ordered as in the equality case in \eqref{eq:claimnu}.
Using Cavalieri's formula and noticing that by construction it holds
$\nu\big(\{h>t\}\big)=\tilde\nu\big(\{h>t\}\big)$ for $t\notin[b_{\bar i},b_{\bar i+1})$
we reduce to prove that
\[(b_{\bar i+1}-b_{\bar i})\,\nu\big(B\cup A_{\bar i+1}\big)=
\int_{b_{\bar i}}^{b_{\bar i+1}}\nu\big(\{h>t\}\big)\,\d t\leq
\int_{b_{\bar i}}^{b_{\bar i+1}}\tilde\nu\big(\{h>t\}\big)\,\d t=
(b_{\bar i+1}-b_{\bar i})\,\tilde\nu(B\cup A_{\bar i+1})\]
where $B:=A_1\cup\ldots\cup A_{\bar i-1}$. Given that $b_{\bar i+1}-b_{\bar i}\geq 0$,
the conclusion would follow if we showed that
$\nu(B\cup A_{\bar i+1})\leq\tilde\nu(B\cup A_{\bar i+1})$. Since
\[\begin{split}
\nu(B\cup A_{\bar i+1})&=\nu(B\cup A_{\bar i}\cup A_{\bar i+1})+\nu(B)-\nu(B\cup A_{\bar i})
\stackrel{\eqref{eq:defnu}}=\mu(B\cup A_{\bar i}\cup A_{\bar i+1})+
\mu(B)-\mu(B\cup A_{\bar i}),\\
\tilde\nu(B\cup A_{\bar i+1})&=\tilde \nu(B\cup\tilde A_{\bar i})
\stackrel{\eqref{eq:defnu}}=\mu(B\cup\tilde A_{\bar i})=\mu(B\cup A_{\bar i+1}),
\end{split}\]
we know from the submodularity of $\mu$ that
$\nu(B\cup A_{\bar i+1})\leq\tilde\nu(B\cup A_{\bar i+1})$, as required.
\end{proof}
\subsection{Capacity on metric measure spaces}\label{ss:capacity}
We shall be interested in a specific outer measure: the 2-capacity
(to which we shall simply refer as capacity) on a metric measure space $(\X,\sfd,\mm)$.

For the purposes of the current manuscript, by metric measure space we shall always
mean a triple $(\X,\sfd,\mm)$ such that
\begin{equation}\label{eq:mms}\begin{split}
(\X,\sfd)&\quad\text{ is a complete and separable metric space,}\\
\mm\geq 0&\quad\text{ is a Borel measure on }(\X,\sfd),\text{ finite on balls.}
\end{split}\end{equation}
In this setting, starting from \cite{Cheeger00}
(see also \cite{Shanmugalingam00,AmbrosioGigliSavare11}) there is a well-defined notion
of Sobolev space $W^{1,2}(\X,\sfd,\mm)$ (or, briefly, $W^{1,2}(\X)$) of real-valued
functions on $\X$, and to any $f\in W^{1,2}(\X,\sfd,\mm)$ is associated a function
$|D f|\in L^2(\mm)$, called minimal weak upper gradient, which plays the role of
the modulus of the distributional differential of $f$. For our purposes, it will be
useful to recall that $W^{1,2}(\X,\sfd,\mm)$ is a lattice (i.e.\ $f\vee g$ and
$f\wedge g$ are in $W^{1,2}(\X)$ provided $f,g\in W^{1,2}(\X)$),
that the minimal weak upper gradient is local, i.e.\
\begin{equation}
\label{eq:localwug}
|D f|=|Dg|\quad\mm\text{-a.e.\ on }\{f=g\},
\end{equation}
that if $f:\,\X\to\R$ is Lipschitz with bounded support, then
\begin{equation}
\label{eq:wuglip}
|D f|\leq{\rm lip}(f)\quad\mm\text{-a.e.,}
\end{equation}
where we set ${\rm lip}(f)(x):=\lims_{y\to x}\big|f(y)-f(x)\big|/\sfd(x,y)$ if $x$
is not isolated, $0$ otherwise.

Finally, the norm on $W^{1,2}(\X)$ is defined as
\[\|f\|_{W^{1,2}(\X)}^2:=\|f\|_{L^2(\mm)}^2+{\big\||Df|\big\|}^2_{L^2(\mm)}\]
and with this norm $W^{1,2}(\X)$ is always a Banach space whose norm is $L^2$-lower semicontinuous, i.e.
\begin{equation}
\label{eq:lscw12}
f_n\weakto f\ \text{ in }L^2(\mm)\quad\Longrightarrow\quad
\|f\|_{W^{1,2}(\X)}\leq\limi_{n\to\infty} \|f_n\|_{W^{1,2}(\X)},
\end{equation}
where as customary $\|f\|_{W^{1,2}(\X)}$ is set to be $+\infty$ if $f\notin W^{1,2}(\X)$.

Even if in general $W^{1,2}(\X)$ is not Hilbert (and thus the map
$f\mapsto\frac{1}{2}\int|Df|^2\,\d\mm$ is not a Dirichlet form),
the concept of capacity is well-defined as the definition carries over quite naturally (see e.g.\ \cite{Bjorn-Bjorn11}, \cite{HKST15} and references therein for the metric setting, and \cite{bouleau1991dirichlet} for the more classical framework of Dirichlet forms):
\begin{definition}[Capacity]
Let $E$ be a given subset of $\X$. Let us denote
\begin{equation}
\mathcal F_E:=\big\{f\in W^{1,2}(\X)\;\big|
\;f\geq 1\;\mm\text{-a.e.\ on some open neighbourhood of }E\big\}.
\end{equation}
Then the \emph{capacity} of the set $E$ is defined as
the quantity $\Cap(E)\in[0,+\infty]$, given by
\begin{equation}\label{eq:def_cap}
\Cap(E):=\inf_{f\in\mathcal F_E}{\|f\|}^2_{W^{1,2}(\X)}=
\inf_{f\in\mathcal F_E}\int |f|^2+|Df|^2\,\d\mm,
\end{equation}
with the convention that $\Cap(E):=+\infty$ whenever the family $\mathcal F_E$ is empty.
\end{definition}
In the following result we recall the main properties of the set-function $\Cap$:
\begin{proposition}\label{prop:properties_cap}
The capacity $\Cap$ is a submodular outer measure on $\X$,
which satisfies the following properties:
\begin{itemize}
\item[$\rm i)$] $\mm(E)\leq\Cap(E)$ for every Borel subset $E$ of $\X$,
\item[$\rm ii)$] $\Cap(B)<\infty$ for any bounded set $B\subset\X$.
\end{itemize}
\end{proposition}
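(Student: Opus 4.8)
The plan is to verify each claim more or less directly from the definitions, using the facts about $W^{1,2}(\X)$ recalled above (the lattice property, $L^2$-lower semicontinuity of the $W^{1,2}$-norm, and the estimate $|Df|\le{\rm lip}(f)$ for Lipschitz functions). First I would check that $\Cap$ is an outer measure, i.e.\ monotone and countably subadditive. Monotonicity is immediate since $E\subset F$ implies $\mathcal F_F\subset\mathcal F_E$, so the infimum over the larger family is no larger. For countable subadditivity, given $E=\bigcup_n E_n$ with each $\Cap(E_n)<\infty$, pick for $\eps>0$ competitors $f_n\in\mathcal F_{E_n}$ with $\|f_n\|_{W^{1,2}}^2\le\Cap(E_n)+\eps 2^{-n}$; the natural candidate for $E$ is $f:=\sup_n f_n$. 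One shows $f\in W^{1,2}(\X)$ with $\||f|^2+|Df|^2\|_{L^1}\le\sum_n(\Cap(E_n)+\eps2^{-n})$ by taking finite maxima $g_N:=\max_{n\le N}f_n$, using the lattice property together with locality of the minimal weak upper gradient to get $\||g_N|^2+|Dg_N|^2\|_{L^1}\le\sum_{n\le N}\|f_n\|_{W^{1,2}}^2$ (on $\{g_N=f_n\}$ one has $|Dg_N|=|Df_n|$, and these sets cover $\X$), then passing to the limit via \eqref{eq:lscw12} since $g_N\to f$ in $L^2(\mm)$ by monotone convergence (the $L^2$ bound being uniform). Finally $f\ge1$ $\mm$-a.e.\ on the union of the open neighbourhoods, so $f\in\mathcal F_E$ and $\Cap(E)\le\sum_n\Cap(E_n)+\eps$; let $\eps\to0$.

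Next, submodularity: given $E,F$ with competitors $f\in\mathcal F_E$, $g\in\mathcal F_F$, set $h_1:=f\vee g$ and $h_2:=f\wedge g$. Then $h_1\in\mathcal F_{E\cup F}$ and $h_2\in\mathcal F_{E\cap F}$ (on a neighbourhood of $E\cap F$ both $f,g\ge1$, hence so does their minimum; and on the union of the two neighbourhoods the maximum is $\ge1$). By locality of the minimal weak upper gradient, $|Dh_1|^2+|Dh_2|^2=|Df|^2+|Dg|^2$ $\mm$-a.e.\ (partitioning $\X$ according to which of $f,g$ is larger, and using \eqref{eq:localwug} on each piece), and pointwise $h_1^2+h_2^2=f^2+g^2$; integrating gives $\|h_1\|_{W^{1,2}}^2+\|h_2\|_{W^{1,2}}^2=\|f\|_{W^{1,2}}^2+\|g\|_{W^{1,2}}^2$, whence $\Cap(E\cup F)+\Cap(E\cap F)\le\|f\|_{W^{1,2}}^2+\|g\|_{W^{1,2}}^2$; taking infima over $f,g$ yields the claim.

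For $\rm i)$: if $\Cap(E)=\infty$ there is nothing to prove; otherwise for any $f\in\mathcal F_E$ we have $f\ge1$ $\mm$-a.e.\ on an open set $U\supset E$, so $\mm(E)\le\mm(U)\le\int_U|f|^2\,\d\mm\le\|f\|_{W^{1,2}}^2$, and taking the infimum gives $\mm(E)\le\Cap(E)$ (here one uses that $E$ Borel so $\mm(E)$ is defined, though only the bound by $\mm(U)$ is needed). For $\rm ii)$: given a bounded set $B$, fix $R$ with $B\subset B_R(x_0)$ and take the Lipschitz cutoff $f(x):=\max\{0,\min\{1,2-\sfd(x,x_0)/R\}\}$, which equals $1$ on $B_R(x_0)\supset B$, is supported in $B_{2R}(x_0)$, and is $1$-Lipschitz with ${\rm lip}(f)\le 1/R$; then \eqref{eq:wuglip} gives $|Df|\le1/R$ $\mm$-a.e., so $\|f\|_{W^{1,2}}^2\le(1+R^{-2})\,\mm(B_{2R}(x_0))<\infty$ since $\mm$ is finite on balls, and hence $\Cap(B)<\infty$.

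The routine parts are the cutoff estimate in $\rm ii)$ and the pointwise identities; \textbf{the main obstacle} is the subadditivity argument, specifically justifying that $f=\sup_n f_n\in W^{1,2}(\X)$ with the right norm bound — this requires combining the lattice/locality structure to control $|D(\max_{n\le N}f_n)|$ uniformly in $N$ and then invoking the $L^2$-lower semicontinuity \eqref{eq:lscw12}, paying attention that one does \emph{not} have a pointwise gradient bound for the infinite supremum a priori. One must also take a small amount of care that the infima defining $\Cap$ are over possibly empty families, which is handled by the stated $+\infty$ convention and by reducing to the case where all relevant capacities are finite.
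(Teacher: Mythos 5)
Your proof is correct, and for submodularity and for items i), ii) it follows essentially the paper's argument (the identity $\|f\vee g\|_{W^{1,2}(\X)}^2+\|f\wedge g\|_{W^{1,2}(\X)}^2=\|f\|_{W^{1,2}(\X)}^2+\|g\|_{W^{1,2}(\X)}^2$ via locality of the minimal weak upper gradient, and a Lipschitz cutoff for the finiteness on bounded sets). Where you genuinely diverge is countable subadditivity: you prove it directly by taking near-optimal competitors $f_n\in\mathcal F_{E_n}$ and showing that $\sup_nf_n\in\mathcal F_{\bigcup_nE_n}$, controlling the finite maxima through the lattice/locality identity and then passing to the limit with \eqref{eq:lscw12} along the monotone (hence strong $L^2$) convergence. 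The paper instead deduces finite subadditivity from submodularity and then proves the continuity property $\Cap\big(\bigcup_nE_n\big)=\lim_n\Cap(E_n)$ for \emph{every} increasing sequence of sets -- first for open sets by a weak-compactness and lower-semicontinuity argument, then in general by sandwiching with open sets via submodularity -- which implies countable subadditivity. Your route is more economical for the statement as written, but the paper's yields the strictly stronger continuity along increasing sequences, which does not follow from your construction (the norm of $\sup_nf_n$ is controlled by the \emph{sum}, not the supremum, of the $\|f_n\|_{W^{1,2}(\X)}^2$) and which is implicitly relied upon later: the monotone convergence statement in item b) of Proposition \ref{prop:properties_mu}, applied to $\mu=\Cap$ in Propositions \ref{prop:Sf_dense} and \ref{prop:QC_complete}, uses precisely that $\Cap\big(\{f_n>t\}\big)\uparrow\Cap\big(\{f>t\}\big)$. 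Two cosmetic points: replacing $f_n$ by $f_n\vee 0$ (still in $\mathcal F_{E_n}$, with no larger norm, by the lattice property) makes the $L^2$ convergence of the finite maxima to $\sup_nf_n$ a clean application of monotone convergence, and your cutoff is supported in the closed ball of radius $2R$, so the bound should involve $\mm\big(\overline{B_{2R}(x_0)}\big)$; neither affects correctness.
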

\begin{proof}
We start claiming that for any $f,g\in W^{1,2}(\X)$ it holds
\begin{equation}\label{eq:capsub}
\|f\vee g\|_{W^{1,2}(\X)}^2+\|f\wedge g\|_{W^{1,2}(\X)}^2
=\|f\|_{W^{1,2}(\X)}^2+\|g\|_{W^{1,2}(\X)}^2.
\end{equation}
Indeed, the set $\big\{(f\vee g)(x),(f\wedge g)(x)\big\}$ coincides with the set
$\big\{f(x),g(x)\big\}$ for $\mm$-a.e.\ $x$ and thus
\[|f\vee g|^2(x)+|f\wedge g|^2(x)=|f|^2(x)+|g|^2(x)\]
and, similarly, by the locality property \eqref{eq:localwug} the set
$\big\{|D (f\vee g)|(x),|D(f\wedge g)|(x)\big\}$ coincides with the set
$\big\{|Df|(x),|Dg|(x)\big\}$ for $\mm$-a.e.\ $x$ and thus
\[\big|D(f\vee g)\big|^2(x)+\big|D(f\wedge g)\big|^2(x)=|Df|^2(x)+|Dg|^2(x).\]
Adding up these two identities and integrating we obtain \eqref{eq:capsub}.

Now let $E_1,E_2\subset\X$ be given and notice that for $f_i\in\mathcal F_{E_i}$, $i=1,2$,
it holds that $f_1\vee f_2\in\mathcal F_{E_1\cup E_2}$ and
$f_1\wedge f_2\in\mathcal F_{E_1\cap E_2}$, thus
\[\Cap(E_1\cup E_2)+\Cap(E_1\cap E_2)\leq
\|f_1\vee f_2\|_{W^{1,2}(\X)}^2+\|f_1\wedge f_2\|_{W^{1,2}(\X)}^2
\stackrel{\eqref{eq:capsub}}=\|f_1\|_{W^{1,2}(\X)}^2+\|f_2\|_{W^{1,2}(\X)}^2.\]
Hence passing to the infimum over $f_i\in\mathcal F_{E_i}$ we conclude that
\begin{equation}\label{eq:submodcap}
\Cap(E_1\cup E_2)+\Cap(E_1\cap E_2)\leq\Cap(E_1)+\Cap(E_2).
\end{equation}
In particular, this shows that $\Cap$ is finitely subadditive and thus to conclude
that it is a submodular outer measure it is sufficient to show that if $(E_n)_n$
is an increasing sequence of subsets of $\X$ it holds that
$\Cap\big(\bigcup_n E_n\big)=\sup_n\Cap(E_n)$. Since trivially
$\Cap(E)\leq\Cap(F)$ if $E\subset F$, it is sufficient to prove that
$\Cap\big(\bigcup_n E_n\big)\leq\sup_n\Cap(E_n)$ and this is obvious
if $\sup_n\Cap(E_n)=+\infty$. 

Thus assume that $S:=\sup_n\Cap(E_n)<+\infty$ and assume for the moment
also that the $E_n$'s are open. Let $f_n\in\mathcal F_{E_n}$ be such that
$\|f_n\|_{W^{1,2}(\X)}^2\leq\Cap(E_n)+2^{-n}\leq S+1$. Thus in particular
such sequence is bounded in $L^2(\mm)$ and thus for some $n_k\uparrow+\infty$
we have that $f_{n_k}\weakto f$ in $L^2(\mm)$ for some $f\in L^2(\mm)$.
Passing to the (weak) limit in $k$ in the inequality
$f_{n_k}\geq\nchi_{E_{n_k}}\geq \nchi_{E_{n_\ell}}$ valid for $k\geq\ell$,
we conclude that $f\geq \nchi_{E_{n_\ell}}$ for every $\ell$, hence
$f\geq\nchi_{\bigcup_n E_n}$. Since the $E_n$'s are open, this means that
$f\in\mathcal F_{\bigcup_n E_n}$. Therefore taking into account the semicontinuity property \eqref{eq:lscw12}
we deduce that
\[\Cap\big({\textstyle\bigcup\nolimits_n}E_n\big)\leq
\|f\|_{W^{1,2}(\X)}^2\leq\limi_{k\to\infty}\|f_{n_k}\|^2_{W^{1,2}(\X)}\leq
\limi_{k\to\infty}\Cap(E_{n_k})+\frac{1}{2^{n_k}}=\lim_{n\to\infty}\Cap(E_n).\]
Now let us drop the assumption that the $E_n$'s are open. Let $\eps>0$. We use the
submodularity property \eqref{eq:submodcap} and an induction argument to find an
increasing sequence $(\Omega_n)_n$ of open sets such that $\Omega_n\supset E_n$
and $\Cap(\Omega_n)\leq\Cap(E_n)+\eps\sum_{i\leq n}2^{-i}$. Then taking into account
what already proved for open sets we deduce that
\[\Cap\big({\textstyle\bigcup\nolimits_n}E_n\big)\leq
\Cap\big({\textstyle\bigcup\nolimits_n}\Omega_n\big)=
\lim_{n\to\infty}\Cap(\Omega_n)\leq\lim_{n\to\infty}\Cap(E_n)+
\eps\sum_{i\leq n}\frac{1}{2^i}=\lim_{n\to\infty}\Cap(E_n)+\eps\]
and the conclusion follows from the arbitrariness of $\eps>0$.

The inequality in $(i)$ trivially follows noticing that for $f\in\mathcal F_E $
it holds $f\geq 1$ $\mm$-a.e.\ on $E$, thus
\[\int|f|^2+|D f|^2\,\d\mm\geq\int_E 1\,\d\mm=\mm(E),\]
so that the conclusion follows taking the infimum over $f\in\mathcal F_E $.
For the statement $(ii)$ it is sufficient to recall that for any $B\subset\X$
bounded there is $f$ Lipschitz with bounded support that is $\geq 1$ on a 
neighbourhood of $B$ and that such $f$ belongs to $W^{1,2}(\X)$.
\end{proof}
\subsection{The space \texorpdfstring{$L^0(\Cap)$}{L0(Cap)}}
We have just seen that $\Cap$ is a submodular outer measure and in Subsection
\ref{ss:integration_outer_measures} we recalled how integration w.r.t.\ outer
measures is defined. It makes therefore sense to consider the integral associated
to $\Cap$ and that such integral is subadditive by Proposition \ref{prop:properties_cap}
and Theorem \ref{thm:subadditivity_theorem}.
In light of this observation, the following definition is meaningful:
\begin{definition}[The space $L^0(\Cap)$]\label{def:L0(Cap)}
Given any two functions $f,g:\,\X\to\R$, we will
say that $f=g$ \emph{in the $\Cap$-a.e.\ sense}
provided $\Cap\big(\{f\neq g\}\big)=0$. We define $L^0(\Cap)$ as the
space of all the equivalence classes -- up to $\Cap$-a.e.\ equality -- of
Borel functions on $\X$. 

We endow $L^0(\Cap)$ with the following distance: pick an increasing sequence
$(A_k)_k$ of open subsets of $\X$ with finite capacity such that for any
$B\subset\X$ bounded there is $k\in\N$ with $B\subset A_k$
(for instance, one could pick $A_k:=B_k(\bar x)$ for some $\bar x\in\X$),
then let us define
\begin{equation}\label{eq:def_distance_L0(Cap)}
\sfd_\Cap(f,g):=\sum_{k\in\N}\frac{1}{2^k\big(\Cap(A_k)\vee 1\big)}
\int_{A_k}|f-g|\wedge 1\,\d\Cap\quad\text{ for every }f,g\in L^0(\Cap).
\end{equation}
\end{definition}
Notice that the integral $\int_{A_k}|f-g|\wedge 1\,\d\Cap$ is well-defined,
since its value does not depend on the particular representatives of $f$ and $g$,
as granted by item $(iv)$ of Proposition \ref{prop:properties_mu}.
Moreover, we point out that the fact that $\sfd_\Cap$ satisfies the triangle
inequality is a consequence of the subadditivity of the integral associated
with the capacity.
\begin{remark}{\rm
We point out that if $\Cap(\X)<+\infty$ then the choice
$A_k:=\X$ for all $k\in\N$ is admissible in Definition \ref{def:L0(Cap)}.
\fr}\end{remark}
The next result shows that, even if the choice of the particular sequence $(A_k)_k$
might affect the distance $\sfd_\Cap$, its induced topology remains unaltered.
\begin{proposition}[Convergence in $L^0(\Cap)$]\label{prop:Cauchy_indep_choice}
The following holds:
\begin{itemize}
\item \underline{Characterization of Cauchy sequences}. 
Let $(f_n)_n\subseteq L^0(\Cap)$ be given. Then the following conditions are equivalent:
\begin{itemize}
\item[$\rm i)$] $\lim_{n,m}\sfd_\Cap(f_n,f_m)=0$,
\item[$\rm ii)$] $\lim_{n,m}\Cap\big(B\cap\big\{|f_n-f_m|>\eps\big\}\big)=0$
for any $\eps>0$ and any bounded set $B\subset\X$.
\end{itemize}
\item \underline{Characterization of convergence}.
Let $f\in L^0(\Cap)$ and $(f_n)_n\subseteq L^0(\Cap)$.
Then the following conditions are equivalent:
\begin{itemize}
\item[$\rm i)$] $\lim_n\sfd_\Cap(f_n,f)=0$,
\item[$\rm ii)$] $\lim_n\Cap\big(B\cap\big\{|f_n-f|>\eps\big\}\big)=0$ for
any $\eps>0$ and any bounded set $B\subset\X$.
\end{itemize}
\end{itemize}
\end{proposition}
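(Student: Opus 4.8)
The two equivalences are proved in the same way: the characterization of convergence is the special case of the Cauchy one in which $f_m$ is replaced, for all large $m$, by a fixed $f\in L^0(\Cap)$, and conversely the Cauchy statement is obtained by reading the same estimates below as double limits in $n,m$. I will therefore spell out the argument for the convergence statement and only point to the trivial modifications needed for the other. Also, since the set $\{|f_n-f|>\eps\}$ shrinks as $\eps$ grows, in condition $\rm ii)$ it suffices to treat $\eps\in(0,1)$.

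For the implication $\rm i)\Rightarrow ii)$, fix $\eps\in(0,1)$ and a bounded set $B\subset\X$, and choose $k\in\N$ with $B\subset A_k$ (possible by the defining property of the sequence $(A_k)_k$). Since all the summands in \eqref{eq:def_distance_L0(Cap)} are non-negative, $\sfd_\Cap(f_n,f)\to 0$ forces $\int_{A_k}|f_n-f|\wedge 1\,\d\Cap\to 0$. As $\eps<1$, on $A_k$ one has $\{|f_n-f|>\eps\}\subset\{\nchi_{A_k}(|f_n-f|\wedge 1)>\eps\}$, so by monotonicity of $\Cap$ and \v{C}eby\v{s}\"{e}v's inequality (Proposition \ref{prop:properties_mu})
\[\Cap\big(B\cap\{|f_n-f|>\eps\}\big)\leq\Cap\big(A_k\cap\{|f_n-f|>\eps\}\big)\leq\frac1\eps\int_{A_k}|f_n-f|\wedge 1\,\d\Cap,\]
and the right-hand side tends to $0$ as $n\to\infty$; for the Cauchy version one uses instead $\int_{A_k}|f_n-f_m|\wedge 1\,\d\Cap\leq 2^k(\Cap(A_k)\vee 1)\,\sfd_\Cap(f_n,f_m)\to 0$.

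The substance of the statement is $\rm ii)\Rightarrow i)$, and here the point is that, the capacity being merely an outer measure, we cannot invoke a dominated convergence theorem to pass from ``$|f_n-f|\wedge 1\to 0$ in $\Cap$-measure on $A_k$'' to convergence of the corresponding integrals. Fix $\delta>0$. Using $\int_{A_k}|f_n-f|\wedge 1\,\d\Cap\leq\Cap(A_k)$ and $\Cap(A_k)/(\Cap(A_k)\vee 1)\leq 1$, the tail $\sum_{k>N}$ of \eqref{eq:def_distance_L0(Cap)} is bounded by $\sum_{k>N}2^{-k}$, hence is $<\delta/2$ for $N$ large, uniformly in $n$; it remains to show $\int_{A_k}|f_n-f|\wedge 1\,\d\Cap\to 0$ for each of the finitely many $k\leq N$. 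The substitute for dominated convergence is the elementary pointwise bound $\nchi_{A_k}(|f_n-f|\wedge 1)\leq\eps\,\nchi_{A_k}+\nchi_{A_k\cap\{|f_n-f|>\eps\}}$ (valid for $\eps\in(0,1)$), which together with the subadditivity of the integral with respect to $\Cap$ (Theorem \ref{thm:subadditivity_theorem}, applicable since $\Cap$ is submodular by Proposition \ref{prop:properties_cap}) yields
\[\int_{A_k}|f_n-f|\wedge 1\,\d\Cap\leq\eps\,\Cap(A_k)+\Cap\big(A_k\cap\{|f_n-f|>\eps\}\big).\]
If the $A_k$ are bounded (e.g.\ for $A_k=B_k(\bar x)$), the second term tends to $0$ by $\rm ii)$, so taking $\limsup_n$ at fixed $\eps$ and then letting $\eps\downarrow 0$ (legitimate since $\Cap(A_k)<\infty$) gives $\int_{A_k}|f_n-f|\wedge 1\,\d\Cap\to 0$; combining the $N$ head terms with the tail bound then produces $\sfd_\Cap(f_n,f)<\delta$ for $n$ large. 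For general, possibly unbounded, $A_k$ one first reduces to bounded sets: since $A_k$ has finite capacity, a cut-off argument — multiply a near-minimizer $f\in\mathcal F_{A_k}$ by $1-\varphi_R$, where $\varphi_R$ is $(1/R)$-Lipschitz, equals $1$ on $B_R(\bar x)$, vanishes off $B_{2R}(\bar x)$, and lies in $W^{1,2}(\X)$ with $|D\varphi_R|\leq 1/R$ by \eqref{eq:wuglip} — shows $\Cap\big(A_k\setminus B_{2R}(\bar x)\big)\to 0$ as $R\to\infty$; one then splits $\int_{A_k}=\int_{A_k\cap B_{2R}(\bar x)}+\int_{A_k\setminus B_{2R}(\bar x)}$, estimates the first integral by the bounded-set argument applied to $B_{2R}(\bar x)$ and the second by $\Cap\big(A_k\setminus B_{2R}(\bar x)\big)$. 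The Cauchy version is word for word the same with $f$ replaced by $f_m$. The main obstacle is precisely this last implication: the explicit truncation estimate above works only because the cut-off $\wedge 1$ is uniform and $\Cap(A_k)$ is finite, and the cut-off reduction is needed to bring genuinely unbounded $A_k$'s within the reach of hypothesis $\rm ii)$.
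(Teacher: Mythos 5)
Your proof is correct and follows essentially the same route as the paper's: the implication ${\rm i)}\Rightarrow{\rm ii)}$ via \v{C}eby\v{s}\"{e}v's inequality on $A_k\supset B$, and ${\rm ii)}\Rightarrow{\rm i)}$ by truncating the tail of the series defining $\sfd_\Cap$ and splitting each integral $\int_{A_k}|f_n-f_m|\wedge 1\,\d\Cap$ into the set $\{|f_n-f_m|>\eps\}$ (estimated by its capacity) and its complement (estimated by $\eps\,\Cap(A_k)$), exactly as in the paper. The only difference is your additional cut-off argument reducing possibly unbounded $A_k$'s to bounded sets; the paper instead applies hypothesis ${\rm ii)}$ directly with $B=A_i$ (harmless for the suggested choice $A_k=B_k(\bar x)$), so your extra step is a legitimate refinement covering the case of unbounded $A_k$ of finite capacity rather than a different method.
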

\begin{proof}
We shall only prove the characterization of Cauchy sequences,
as the other claim follows by similar means.\\
{\color{blue}${\rm i)}\Longrightarrow{\rm ii)}$}
Fix any $0<\eps<1$ and a bounded set $B\subset\X$. Choose $k\in\N$ such that
$B\subset A_k$. Given that $\sfd_\Cap(f_n,f_m)\overset{n,m}\longrightarrow 0$,
we have $\lim_{n,m}\int_{A_k}|f_n-f_m|\wedge 1\,\d\Cap=0$.
Therefore we conclude that
\[\begin{split}
\lims_{n,m\to\infty}\eps\,\Cap\big(B\cap\big\{|f_n-f_m|>\eps\big\}\big)
&\leq\lims_{n,m\to\infty}\eps\,\Cap\big(A_k\cap\big\{|f_n-f_m|>\eps\big\}\big)\\
&\leq\lim_{n,m\to\infty}\int_{A_k}|f_n-f_m|\wedge 1\,\d\Cap=0.
\end{split}\]
{\color{blue}${\rm ii)}\Longrightarrow{\rm i)}$} Let $\eps>0$ be fixed. Choose $k\in\N$
such that $2^{-k}\leq\eps$. By our hypothesis, there is $\bar n\in\N$ such
that $\Cap\big(A_i\cap\big\{|f_n-f_m|>\eps\big\}\big)\leq\eps\,\Cap(A_i)$
for every $n,m\geq\bar n$ and $i=1,\ldots,k$. Let us call
$B_i^{nm}:=A_i\cap\big\{|f_n-f_m|>\eps\big\}$ and
$C_i^{nm}:=A_i\setminus B_i^{nm}$. Therefore for any $n,m\geq\bar n$ it holds that
\[\begin{split}
\sfd_\Cap(f_n,f_m)&\leq
\sum_{i=1}^k\frac{1}{2^i\,\Cap(A_i)}\int_{A_i}|f_n-f_m|\wedge 1\,\d\Cap
+\sum_{i=k+1}^\infty\frac{1}{2^i}\\
&\leq\sum_{i=1}^k\frac{1}{2^i\,\Cap(A_i)}\left[\int_{B_i^{nm}}|f_n-f_m|\wedge 1\,\d\Cap+
\int_{C_i^{nm}}|f_n-f_m|\wedge 1\,\d\Cap\right]+\eps\\
&\leq\sum_{i=1}^k\frac{1}{2^i\,\Cap(A_i)}\big[\Cap(B_i^{nm})+\eps\,\Cap(A_i)\big]
+\eps\leq 3\,\eps,
\end{split}\]
proving that $\lim_{n,m}\sfd_\Cap(f_n,f_m)=0$, as required.
\end{proof}

\begin{theorem}\label{thm:L0_complete}
The metric space $\big(L^0(\Cap),\sfd_\Cap\big)$ is complete.
\end{theorem}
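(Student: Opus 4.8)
The plan is the standard strategy for completeness of $L^0$-type spaces, carefully adapted to the fact that $\Cap$ admits no dominated convergence theorem. Let $(f_n)_n\subseteq L^0(\Cap)$ be $\sfd_\Cap$-Cauchy. First I would extract a rapidly converging subsequence: since $\sfd_\Cap(f_n,f_m)\to 0$, each summand in the definition \eqref{eq:def_distance_L0(Cap)} of $\sfd_\Cap$ tends to $0$, i.e.\ $\int_{A_k}|f_n-f_m|\wedge 1\,\d\Cap\to 0$ for every fixed $k$, whence by a \v{C}eby\v{s}\"{e}v-type estimate (item v) of Proposition~\ref{prop:properties_mu}) $\Cap\big(A_k\cap\{|f_n-f_m|>\eps\}\big)\to 0$ for every $k$ and every $\eps>0$. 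Using this I would choose inductively indices $n_1<n_2<\cdots$ so that, setting $E_k:=A_k\cap\{|f_{n_{k+1}}-f_{n_k}|>2^{-k}\}$, one has $\Cap(E_k)\leq 2^{-k}$ for all $k$; the key point of the induction is that the smallness of $\Cap(E_k)$ has to be secured simultaneously with the inclusion $E_k\subseteq A_k$, which is why a plain diagonal extraction will not do.

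By Borel--Cantelli (Proposition~\ref{prop:properties_mu}(c)) the set $E:=\bigcap_n\bigcup_{m\geq n}E_m$ has $\Cap(E)=0$. Given a bounded set $B\subset\X$, pick $k_0$ with $B\subseteq A_{k_0}$; then for $x\in B$ and $m\geq k_0$ the membership $x\in A_m$ is automatic, so for every $j\geq k_0$ and every $x\in B\setminus\bigcup_{m\geq j}E_m$ we have $|f_{n_{m+1}}(x)-f_{n_m}(x)|\leq 2^{-m}$ for all $m\geq j$, hence $|f_{n_k}(x)-f_{n_j}(x)|<2^{-j+1}$ for all $k\geq j$ and $(f_{n_k}(x))_k$ is Cauchy in $\R$. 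Since $\X$ is a union of balls and every ball lies in some $A_k$, this shows that $(f_{n_k}(x))_k$ converges in $\R$ for every $x\notin E$. I would then set $g:=\limsup_k f_{n_k}$ (Borel, with values in $[-\infty,+\infty]$), observe that $\{|g|=+\infty\}\subseteq E$ thanks to the finiteness estimate just obtained, and define $f:=g\,\nchi_{\{|g|<+\infty\}}$: this is a genuine real-valued Borel function, so $f\in L^0(\Cap)$, and it coincides $\Cap$-a.e.\ with the pointwise limit of $(f_{n_k})_k$.

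Letting $k\to\infty$ in $|f_{n_k}(x)-f_{n_j}(x)|<2^{-j+1}$ gives $|f(x)-f_{n_j}(x)|\leq 2^{-j+1}$ for $x\in B\setminus\bigcup_{m\geq j}E_m$ and $j\geq k_0$, hence $B\cap\{|f-f_{n_j}|>2^{-j+1}\}\subseteq B\cap\bigcup_{m\geq j}E_m$, and by countable subadditivity of the outer measure $\Cap$ we obtain $\Cap\big(B\cap\{|f-f_{n_j}|>2^{-j+1}\}\big)\leq\sum_{m\geq j}2^{-m}=2^{-j+1}$. For any $\eps>0$, taking $j$ large enough that $2^{-j+1}\leq\eps$ yields $\Cap\big(B\cap\{|f-f_{n_j}|>\eps\}\big)\to 0$, which by the characterization of convergence in Proposition~\ref{prop:Cauchy_indep_choice} means $\sfd_\Cap(f_{n_j},f)\to 0$. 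Since a Cauchy sequence with a convergent subsequence converges to the same limit, $\sfd_\Cap(f_n,f)\to 0$, and completeness follows.

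The main obstacle is exactly the one flagged after Example~\ref{ex}: the absence of a dominated (or even reverse-Fatou) convergence theorem for $\Cap$ rules out the classical shortcut ``$L^1$-Cauchy $\Rightarrow$ a.e.-convergent subsequence'' and forces the Borel--Cantelli route, which in turn requires careful bookkeeping with the exhausting family $(A_k)_k$ so that each fixed bounded set eventually sits inside $A_m$. Once this is arranged, all remaining estimates use only countable subadditivity of the outer measure, which is available.
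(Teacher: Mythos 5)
Your proof is correct and follows essentially the same route as the paper's: Chebyshev to translate the Cauchy condition into capacities of superlevel sets, a rapidly converging subsequence, Borel--Cantelli off a $\Cap$-null set, and the exhaustion $(A_k)_k$ to localize to bounded sets. The only organizational difference is that you extract one diagonal subsequence (tying the subsequence index to the index of $A_k$) and finish with ``Cauchy plus convergent subsequence implies convergent'', whereas the paper works within each fixed $A_k$ via a subsequence-of-subsequences argument and then glues the resulting limits $g^k$; both are sound.
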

\begin{proof}
Let $(f_n)_n$ be a $\sfd_\Cap$-Cauchy sequence of Borel functions $f_n:\,\X\to\R$.
Fix any $k\in\N$. Let $(f_{n_i})_i$ be an arbitrary subsequence of $(f_n)_n$.
Up to passing to a further (not relabeled) subsequence, it holds that
\begin{equation}\label{eq:L0_complete_aux1}
\Cap\big(A_k\cap\big\{|f_{n_i}-f_{n_{i+1}}|>2^{-i}\big\}\big)\leq 2^{-i}
\quad\text{ for every }i\in\N.
\end{equation}
Let us call $F_i:=A_k\cap\big\{|f_{n_i}-f_{n_{i+1}}|>2^{-i}\big\}$ for every $i\in\N$
and $F:=\bigcap_{i\in\N}\bigcup_{j\geq i}F_j$. Given that $\sum_{i\in\N}\Cap(F_i)<+\infty$ by
\eqref{eq:L0_complete_aux1}, we deduce from item c) of Proposition
\ref{prop:properties_mu} that $\Cap(F)=0$.
Notice that if $x\in A_k\setminus F=\bigcup_{i\in\N}\bigcap_{j\geq i}A_k\setminus F_j$,
then there is $i\in\N$ such that $\big|f_{n_j}(x)-f_{n_{j+1}}(x)\big|\leq 2^{-j}$
for all $j\geq i$, which grants that $\big(f_{n_i}(x)\big)_i\subseteq\R$ is a Cauchy
sequence for every $x\in A_k\setminus F$.
Therefore we define the Borel function $g^k:\,A_k\to\R$ as
\[g^k(x):=\left\{\begin{array}{ll}
\lim_i f_{n_i}(x)\\
0
\end{array}\quad\begin{array}{ll}
\text{ if }x\in A_k\setminus F,\\
\text{ if }x\in F.
\end{array}\right.\]
Now fix any $\eps>0$. Choose $\bar i\in\N$ such that $\sum_{i\geq\bar i}2^{-i}\leq\eps$.
If $i\geq\bar i$ and $x\in\bigcap_{j\geq i}A_k\setminus F_j$
(thus in particular $x \notin F$), hence one has
$\big|f_{n_i}(x)-g^k(x)\big|\leq\sum_{j\geq i}\big|f_{n_j}(x)-f_{n_{j+1}}(x)\big|\leq\sum_{j\geq i}2^{-j}\leq\eps$.
This implies that
\begin{equation}
A_k \cap\big\{|f_{n_i}-g^k|>\eps\big\}\subseteq\bigcup_{j\geq i}F_j
\quad\text{ for every }i\geq\bar i.
\end{equation}
Then $\Cap\big(A_k\cap\big\{|f_{n_i}-g^k|>\eps\big\}\big)
\leq\sum_{j\geq i}\Cap(F_j)\leq\sum_{j\geq i}2^{-j}$
holds for every $i\geq\bar i$, thus
\begin{equation}
\lim_{i\to\infty}\Cap\big(A_k\cap\big\{|f_{n_i}-g^k|>\eps\big\}\big)=0
\quad\text{ for every }\eps>0.
\end{equation}
We proved this property for some subsequence of a given
subsequence $(f_{n_i})_i$ of $(f_n)_n$, hence this shows that
\begin{equation}\label{eq:L0_complete_aux2}
\lim_{n\to\infty}\Cap\big(A_k\cap\big\{|f_n-g^k|>\eps\big\}\big)=0
\quad\text{ for every }\eps>0.
\end{equation}
Now let us define the Borel function $f:\,\X\to\R$ as $f:=\sum_{k\in\N}\nchi_{A_k}\,g^k$.
Notice that the identity $A_k\cap\big\{|f_n-f|>\eps\big\}=A_k\cap\big\{|f_n-g^k|>\eps\big\}$
and \eqref{eq:L0_complete_aux2} yield
\[\lim_{n\to\infty}\Cap\big(A_k\cap\big\{|f_n-f|>\eps\big\}\big)=0
\quad\text{ for every }k\in\N\text{ and }\eps>0.\]
Since any bounded subset of $\X$ is contained in the set $A_k$ for some $k\in\N$,
we immediately deduce that $\lim_n\Cap\big(B\cap\big\{|f_n-f|>\eps\big\}\big)=0$
whenever $\eps>0$ and $B\subset\X$ is bounded. This grants that $\lim_n\sfd_\Cap(f_n,f)=0$
by Proposition \ref{prop:Cauchy_indep_choice}, thus proving that
$\big(L^0(\Cap),\sfd_\Cap\big)$ is a complete metric space and accordingly the statement.
\end{proof}

We conclude this section with some other basic properties
of the metric space $\big(L^0(\Cap),\sfd_\Cap\big)$:
\begin{proposition}\label{prop:Sf_dense}
Let $f_n\to f$
in $L^0(\Cap)$. Then there exists a subsequence $n_j\uparrow+\infty$ such that
for $\Cap$-a.e.\ $x$ it holds that $f_{n_j}(x)\to f(x)$.

Moreover, the space ${\sf Sf}(\X)$ of simple functions, which is defined as
\begin{equation}
{\sf Sf}(\X):=\left\{\sum_{n=1}^\infty\alpha_n\,\nchi_{E_n}\;\bigg|\;
\begin{array}{ll}
(\alpha_n)_n\subset\R\text{ and }(E_n)_n\text{ is some}\\
\text{partition of }\X\text{ into Borel sets}\end{array}\right\},
\end{equation}
is dense in $\big(L^0(\Cap),\sfd_\Cap\big)$.
\end{proposition}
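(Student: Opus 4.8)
The statement has two parts: (1) convergence in $L^0(\Cap)$ admits a $\Cap$-a.e.\ convergent subsequence, and (2) simple functions are dense. For part (1), the plan is to reuse the machinery already deployed in the proof of Theorem \ref{thm:L0_complete}. Fix an exhausting sequence $(A_k)_k$ as in Definition \ref{def:L0(Cap)}. By Proposition \ref{prop:Cauchy_indep_choice} (characterization of convergence), $\lim_n\Cap\big(A_k\cap\{|f_n-f|>\eps\}\big)=0$ for every $k$ and every $\eps>0$. A diagonal argument then produces a single subsequence $(f_{n_j})_j$ such that $\Cap\big(A_k\cap\{|f_{n_j}-f|>2^{-j}\}\big)\leq 2^{-j}$ for all $j\geq k$; applying Borel--Cantelli (item c) of Proposition \ref{prop:properties_mu}) on each $A_k$ shows that, off a $\Cap$-null set $F_k\subseteq A_k$, one has $f_{n_j}(x)\to f(x)$ for $x\in A_k\setminus F_k$. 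Since $\X=\bigcup_k A_k$ and a countable union of $\Cap$-null sets is $\Cap$-null (subadditivity of $\Cap$), putting $F:=\bigcup_k F_k$ gives $f_{n_j}(x)\to f(x)$ for $\Cap$-a.e.\ $x$.

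For part (2), the plan is a two-step approximation: first truncate, then discretize the range. Given $f\in L^0(\Cap)$, the truncations $f_N:=(-N)\vee f\wedge N$ converge to $f$ in $L^0(\Cap)$ — indeed $\{|f-f_N|>\eps\}\subseteq\{|f|>N\}$, and by \v Ceby\v sev / monotone convergence the capacity of $\{|f|>N\}\cap A_k$ tends to $0$ as $N\to\infty$ for each fixed $k$ (here one uses that $|f|\wedge 1$ has finite integral on each $A_k$, which holds since $\Cap(A_k)<\infty$). So it suffices to approximate a bounded Borel function $g$ with $|g|\leq N$. For such $g$ and given $\eps>0$, partition $[-N,N]$ into intervals of length $\eps$ and let $h:=\sum_i \alpha_i\nchi_{g^{-1}(I_i)}$ where $\alpha_i$ is, say, the left endpoint of $I_i$; then $|g-h|\leq\eps$ everywhere, so certainly $\sfd_\Cap(g,h)\leq\eps$, and $h\in{\sf Sf}(\X)$ since the sets $g^{-1}(I_i)$ form a finite Borel partition of $\X$. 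Combining the two steps and using the triangle inequality for $\sfd_\Cap$ gives density.

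The only mildly delicate point is the truncation step, where one must verify $\lim_{N\to\infty}\Cap\big(A_k\cap\{|f|>N\}\big)=0$. This follows because the sets $A_k\cap\{|f|>N\}$ decrease as $N\uparrow\infty$ with empty intersection (as $f$ is real-valued, hence finite), but $\Cap$ need not be continuous along decreasing sequences; instead one argues via the integral: $\Cap\big(A_k\cap\{|f|>N\}\big)\leq\int_{A_k}\nchi_{\{|f|>N\}}\,\d\Cap$, and since $\nchi_{\{|f|>N\}}\cdot\nchi_{A_k}\downarrow 0$ pointwise while being dominated by $\nchi_{A_k}$ (a function of finite $\Cap$-integral), one needs a downward monotone convergence statement — which is \emph{not} available in general (cf.\ the remark following Example \ref{ex} on the failure of dominated convergence). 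The correct route is therefore to note that $\{|f|>N\}\cap A_k=\bigcup_{j\geq 1}\big(\{|f-f_{N}|\geq 2^{-j}\}\cap A_k\big)$-type decompositions do not help either; instead use directly that $t\mapsto\Cap\big(A_k\cap\{|f|>t\}\big)$ is non-increasing with $\int_0^\infty\Cap\big(A_k\cap\{|f|>t\}\big)\,\d t=\int_{A_k}|f|\,\d\Cap$, which we may assume finite after first replacing $f$ by $f\wedge 1$-type modifications inside the distance (equivalently, since $\sfd_\Cap$ only ever sees $|f-g|\wedge 1$, it is enough to approximate $f$ in the topology, and bounded functions already suffice). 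This last observation — that $\sfd_\Cap$ depends on $f-g$ only through its truncation at height $1$ — is what makes the truncation step clean, and is the key thing to get right.
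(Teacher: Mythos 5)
Your proof of the first claim is correct and takes a genuinely different route from the paper's. The paper extracts a subsequence with $\sfd_\Cap(f_{n_j},f_{n_{j+1}})\leq 2^{-j}$, shows via the partial sums $g_j=\sum_{i<j}|f_{n_i}-f_{n_{i+1}}|\wedge 1$ and \v{C}eby\v{s}\"ev that $(f_{n_j}(x))_j$ is Cauchy for $\Cap$-a.e.\ $x$, and then must separately identify the pointwise limit $\tilde f$ with $f$ by an integral estimate. Your route — diagonal extraction so that $\Cap\big(A_k\cap\{|f_{n_j}-f|>2^{-j}\}\big)\leq 2^{-j}$ for $j\geq k$, then Borel--Cantelli on each $A_k$ and a countable union — converges directly to $f$ and avoids the identification step; it is arguably cleaner, at the cost of invoking the ``characterization of convergence'' half of Proposition \ref{prop:Cauchy_indep_choice} (which the paper states but whose proof it only sketches as ``similar'').

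The second part has a genuine gap: the truncation step is not merely delicate, it is false. Bounded Borel functions are \emph{not} dense in $L^0(\Cap)$. Concretely, in $\X=\R$ take $f:=\sum_{n\geq 1}n\,\nchi_{\{1/n\}}$; by translation invariance every singleton has capacity $c:=\Cap(\{0\})>0$ (this is exactly the fact used in the remark after Example \ref{ex}). For any bounded $g$ with $\|g\|_\infty\leq M$ one has $\{|f-g|>1/2\}\supseteq\{1/n\,:\,n>M+1/2\}\subseteq[0,1]$, whence $\Cap\big([0,1]\cap\{|f-g|>1/2\}\big)\geq c$ for every bounded $g$, and by Proposition \ref{prop:Cauchy_indep_choice} the function $f$ is not in the closure of the bounded functions; in particular its truncations $f_N$ do not converge to it. Your own text circles around this — you correctly observe that downward monotone/dominated convergence is unavailable — but the escape hatch you propose (``we may assume $\int_{A_k}|f|\,\d\Cap$ finite'', ``bounded functions already suffice'') is precisely the unjustified, and in fact false, assertion.

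The fix is to drop the truncation entirely: the definition of ${\sf Sf}(\X)$ allows \emph{countable} partitions, so one may discretize the range of $f$ over all of $\R$ at once. This is what the paper does: set $E_i:=\bar f^{-1}\big([i\eps,(i+1)\eps)\big)$ for $i\in\Z$ and $\bar g:=\sum_{i\in\Z}i\,\eps\,\nchi_{E_i}\in{\sf Sf}(\X)$; then $|\bar f-\bar g|<\eps$ everywhere, so $\sfd_\Cap(f,g)\leq\eps$ with no convergence of truncations ever needed. Your range-discretization of a bounded function is the right idea — it just has to be applied to a countable partition of $\R$ rather than a finite partition of $[-N,N]$.
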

\begin{proof}
As for the standard case of measures, let the subsequence satisfy
$\sfd_\Cap(f_{n_j},f_{n_{j+1}})\leq 2^{-j}$ for all $j\in\N$. By the very definition
of $\sfd_\Cap$, we deduce that for every $j,k\in\N$ one has
\begin{equation}\label{eq:Sf_dense_aux}
\int_{A_k}|f_{n_j}-f_{n_{j+1}}|\wedge 1\,\d\Cap\leq\frac{c_k}{2^j},
\quad\text{ where }c_k:=2^k\big(\Cap(A_k)\vee 1\big).
\end{equation}
Calling $g_j(x):=\sum_{i=1}^{j-1}\big|f_{n_i}(x)-f_{n_{i+1}}(x)\big|\wedge 1$
for every $j\in\N$ and $x\in\X$, we see that $g_j(x)\uparrow g_\infty(x)$ for
some $g_\infty:\,\X\to[0,+\infty]$. Given any $k\in\N$, we know from item b) of
Proposition \ref{prop:properties_mu} (and the subadditivity of the integral
associated to $\Cap$) that
\[\int_{A_k}g_\infty\,\d\Cap=\lim_{j\to\infty}\int_{A_k}g_j\,\d\Cap
\leq\limi_{j\to\infty}\sum_{i=1}^{j-1}\int_{A_k}|f_{n_i}-f_{n_{i+1}}|\wedge 1\,\d\Cap
\overset{\eqref{eq:Sf_dense_aux}}\leq c_k\limi_{j\to\infty}\sum_{i=1}^{j-1}\frac{1}{2^i}
=c_k.\]
Therefore item v) of Proposition \ref{prop:properties_mu} ensures that
$g_\infty(x)<+\infty$ for $\Cap$-a.e.\ $x\in A_k$, whence also for $\Cap$-a.e.\ $x\in\X$
by arbitrariness of $k\in\N$. Now observe that for all $j'\geq j$ and
$\Cap$-a.e.\ $x\in\X$ it holds that
\begin{equation}\label{eq:Sf_dense_aux2}
\big|f_{n_{j'}}(x)-f_{n_j}(x)\big|\wedge 1\leq
\sum_{i=j}^{j'-1}\big|f_{n_i}(x)-f_{n_{i+1}}(x)\big|\wedge 1
=g_{j'}(x)-g_j(x)\leq g_\infty(x)-g_j(x)<+\infty.
\end{equation}
By letting $j,j'\to\infty$ in \eqref{eq:Sf_dense_aux2} we deduce that
$\big(f_{n_j}(x)\big)_j\subset\R$ is Cauchy for $\Cap$-a.e.\ $x\in\X$,
thus it admits a limit $\tilde f(x)\in\R$. Again by item b) of Proposition
\ref{prop:properties_mu} we know for every $k\in\N$ that
\begin{equation}\label{eq:Sf_dense_aux3}\begin{split}
\int_{A_k}(g_\infty-g_j)\,\d\Cap&\overset{\phantom{\eqref{eq:Sf_dense_aux}}}=
\lim_{j'\to\infty}\int_{A_k}\sum_{i=j}^{j'}|f_{n_i}-f_{n_{i+1}}|\wedge 1\,\d\Cap\\
&\overset{\phantom{\eqref{eq:Sf_dense_aux}}}\leq
\limi_{j'\to\infty}\sum_{i=j}^{j'}\int_{A_k}|f_{n_i}-f_{n_{i+1}}|\wedge 1\,\d\Cap\\
&\overset{\eqref{eq:Sf_dense_aux}}\leq c_k\lim_{j'\to\infty}\sum_{i=j}^{j'}\frac{1}{2^i}
=\frac{c_k}{2^{j-1}}\overset{j}\longrightarrow 0.
\end{split}\end{equation}
By letting $j'\to\infty$ in \eqref{eq:Sf_dense_aux2} we get
$|\tilde f-f_{n_j}|\wedge 1\leq g_\infty-g_j$ $\Cap$-a.e., whence
for any $k\in\N$ it holds
\[\begin{split}
\int_{A_k}|\tilde f-f|\wedge 1\d\Cap&\leq
\limi_{j\to\infty}\bigg[\int_{A_k}|\tilde f-f_{n_j}|\wedge 1\,\d\Cap
+\int_{A_k}|f_{n_j}-f|\wedge 1\,\d\Cap\bigg]\\
&\leq\limi_{j\to\infty}\bigg[\int_{A_k}(g_\infty-g_j)\,\d\Cap
+c_k\,\sfd_\Cap(f_{n_j},f)\bigg]\overset{\eqref{eq:Sf_dense_aux3}}=0.
\end{split}\]
This means that $\int_{A_k}|\tilde f-f|\wedge 1\,\d\Cap=0$ for every $k\in\N$,
thus accordingly $\tilde f=f$ holds $\Cap$-a.e.\ by item iii) of
Proposition \ref{prop:properties_mu}. We then finally conclude that
$f_{n_j}(x)\to f(x)$ for $\Cap$-a.e.\ $x\in\X$.

For the second statement we argue as follows. Fix $f\in L^0(\Cap)$ and $\eps>0$.
Choose a Borel representative $\bar f:\,\X\to\R$ of $f$.
For any integer $i\in\Z$, let us define $E_i:=\bar f^{-1}\big([i\,\eps,(i+1)\,\eps)\big)$.
Then $(E_i)_{i\in\Z}$ constitutes a partition of $\X$ into Borel sets, so that
$\bar g:=\sum_{i\in\Z}i\,\eps\,\nchi_{E_i}$ is a well-defined Borel function that belongs
to ${\sf Sf}(\X)$. Finally, it holds that $\big|\bar f(x)-\bar g(x)\big|<\eps$ for
every $x\in\X$, which grants that $\sfd_\Cap(f,g)\leq\eps$, where
$g\in L^0(\Cap)$ denotes the equivalence class of $\bar g$. Hence the statement follows.
\end{proof}
\begin{remark}\label{rmk:d_conv_implies_ae-notinverse}{\rm
In general, $\Cap$-a.e.\ convergence does not imply convergence in $L^0(\Cap)$,
as shown by the following counterexample.
Consider $P_n$ as in Example \ref{ex} for any $n\geq 1$.
We have that the functions $f_n:=\nchi_{P_n}$ pointwise converge to $0$ as $n\to\infty$.
However, it holds that
\[\Cap\big([0,1]\cap\big\{|f_n|>1/2\big\}\big)=\Cap(P_n)\equiv\Cap(P_1)>0\]
does not converge to $0$, thus we do not have $\lim_n\sfd_\Cap(f_n,0)=0$
by Proposition \ref{prop:Cauchy_indep_choice}.
\fr}\end{remark}
\subsection{Quasi-continuous functions and quasi-uniform convergence}
Here we quickly recall the definition and main properties of quasi-continuous functions associated to Sobolev functions (see \cite{Bjorn-Bjorn11}, \cite{Koskela-Rajala-Shanmugalingam03},  \cite{bouleau1991dirichlet} for more on the topic and detailed bibliography).

\begin{definition}[Quasi-continuous functions]
We say that a function $f:\,\X\to\R$ is \emph{quasi-continuous} provided
for every $\eps>0$ there exists a set $E\subset\X$ with
$\Cap(E)<\eps$ such that the function
$f\restr{\X\setminus E}:\,\X\setminus E\to\R$ is continuous.
\end{definition}
It is clear that if $f,\tilde f$ agree $\Cap$-a.e.\ and one of them is quasi-continuous,
so is the other. Also, by the very definition of capacity, in defining quasi-continuity
one could restrict to sets $E$ which are open. In particular, if $f$ is quasi-continuous
there is an increasing sequence $(C_n)_n$ of closed subsets of $\X$ with
$\lim_n\Cap(\X\setminus C_n)=0$ such that $f$ is continuous on each $C_n$.
Then $N:=\bigcap_n\X\setminus C_n$ is a Borel set with null capacity --
in particular, we have $\mm(N)=0$ by item i) of Proposition \ref{prop:properties_cap}
-- and $f$ is Borel on $\X\setminus N$. This proves that any quasi-continuous function
is $\mm$-measurable and $\Cap$-a.e.\ equivalent to a Borel function. 

We shall denote by $\Cqc$ the collection of all equivalence classes --
up to $\Cap$-a.e.\ equality -- of quasi-continuous functions on $\X$.
What we just said ensures that $\Cqc\subset L^0(\Cap)$.
It is readily verified that $\Cqc$ is an algebra.

Let us now discuss a notion of convergence particularly relevant in relation with $\Cqc$:
\begin{definition}[Local quasi-uniform convergence]
Let $f_n:\,\X\to\R$, $n\in\N\cup\{\infty\}$ be Borel functions.
Then we say that $f_n$ \emph{locally quasi-uniformly converges} to $f_\infty$
as $n\to\infty$ provided for any $B\subset\X$ bounded
and any $\eps>0$ there exists a set $E\subset\X$ with $\Cap(E)<\eps$
such that $f_n\to f_\infty$ uniformly on $B\setminus E$.
In this case, we shall write $f_n\stackrel\qu\to f_\infty$.
\end{definition}
As before, nothing changes if one even requires the set $E$ to be open in the above
definition and the notion of local quasi-uniform convergence is invariant under
modification of the functions in $\Cap$-null sets. Local quasi-uniform convergence
is (almost) the convergence induced by the following distance:
\begin{equation}\label{eq:defdqu}
\sfd_\qu(f,g):=\inf_{E\subset\X}\sum_{k\in\N}\bigg(\frac{\Cap(E\cap A_k)}
{2^k\big(\Cap(A_k)\vee 1\big)}+\frac{1}{2^k}
\sup_{x\in A_k\setminus E}\big|f(x)-g(x)\big|\wedge 1\bigg),
\end{equation}
where $(A_k)_k$ is any sequence as in Definition \ref{def:L0(Cap)}.
Indeed, it is trivial to verify that $\sfd_\qu$ is actually a distance
(notice that $\sfd_\qu\leq 1$, as one can see by picking $E=\X$ in \eqref{eq:defdqu}),
moreover we have:
\begin{proposition}\label{prop:dqu}
Let  $f_n:\X\to \R$, $n\in\N\cup\{\infty\}$
be Borel functions. Then
\begin{itemize}
\item[i)] If $f_n\stackrel\qu\to f_\infty$, then $\sfd_\qu(f_n,f_\infty)\to 0$.
\item[ii)]  If $\sfd_\qu(f_n,f_\infty)\to 0$, then any subsequence $n_k$ has a further
subsequence, not relabeled, such that $f_{n_k}\stackrel\qu\to f_\infty$.
\end{itemize}
\end{proposition}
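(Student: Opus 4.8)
The plan is to prove the two implications separately: i) by a direct estimate built on a single exceptional set, and ii) by a Borel--Cantelli / fast-subsequence argument in the spirit of the proof of Theorem \ref{thm:L0_complete}. In both parts I use that $(A_k)_k$ is increasing; for i) I also use that the $A_k$ may be taken bounded (e.g.\ $A_k=B_k(\bar x)$), so that each $A_k$ is an admissible test set for the definition of $\stackrel\qu\to$.

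\emph{Part i).} Assume $f_n\stackrel\qu\to f_\infty$ and fix $\eps\in(0,1)$. First I would choose $K\in\N$ with $\sum_{k>K}2^{-k}<\eps/3$, so as to discard the tail of the series in \eqref{eq:defdqu}. Then, applying the definition of local quasi-uniform convergence to the bounded set $A_K$ and to the threshold $\eps/6$, I would obtain a \emph{single} set $E\subset\X$ with $\Cap(E)<\eps/6$ and $f_n\to f_\infty$ uniformly on $A_K\setminus E$; pick $N$ with $\sup_{A_K\setminus E}|f_n-f_\infty|<\eps/6$ for every $n\ge N$. Plugging this $E$ into the infimum defining $\sfd_\qu(f_n,f_\infty)$ and splitting the series at $k=K$, the first-term contribution is $\le\sum_k\Cap(E)\,2^{-k}\le 2\,\Cap(E)<\eps/3$ (using $\Cap(E\cap A_k)\le\Cap(E)$ and $\Cap(A_k)\vee1\ge1$), the sup-contribution for $k\le K$ is $\le\big(\sup_{A_K\setminus E}|f_n-f_\infty|\big)\sum_{k\le K}2^{-k}<\eps/3$ (using $A_k\setminus E\subset A_K\setminus E$), and the sup-contribution for $k>K$ is $\le\sum_{k>K}2^{-k}<\eps/3$. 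Hence $\sfd_\qu(f_n,f_\infty)<\eps$ for all $n\ge N$, which is the claim.

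\emph{Part ii).} Assume $\sfd_\qu(f_n,f_\infty)\to0$ and let a subsequence be given; $\sfd_\qu(\cdot,f_\infty)\to0$ still holds along it, so after relabelling I may assume $\sfd_\qu(f_n,f_\infty)\le 4^{-n}$ for all $n\ge1$. For each $n$ I would use the definition of the infimum in \eqref{eq:defdqu} to select a set $E_n\subset\X$ along which the corresponding series is $\le\sfd_\qu(f_n,f_\infty)+4^{-n}\le 2^{-n}$. Since all summands are nonnegative, this forces, for every $k$,
\[\Cap(E_n\cap A_k)\le 2^{-n}\,2^k\big(\Cap(A_k)\vee1\big)\qquad\text{and}\qquad\sup_{A_k\setminus E_n}|f_n-f_\infty|\wedge1\le 2^{k-n}.\]
Now fix a bounded set $B\subset\X$, pick $k_0$ with $B\subset A_{k_0}$, and fix $\eps>0$. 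I would choose $J>k_0$ large enough that $2^{k_0}\big(\Cap(A_{k_0})\vee1\big)\sum_{n\ge J}2^{-n}<\eps$ and set $E:=\bigcup_{n\ge J}\big(E_n\cap A_{k_0}\big)$; by countable subadditivity of $\Cap$ together with the first estimate above, $\Cap(E)<\eps$. For $n\ge J$ one has $B\setminus E\subset A_{k_0}\setminus E_n$, and since $2^{k_0-n}<1$ the truncation at $1$ is inactive there, so the second estimate yields $\sup_{B\setminus E}|f_n-f_\infty|\le 2^{k_0-n}\to0$. Thus $f_n\to f_\infty$ uniformly on $B\setminus E$, and by arbitrariness of $B$ and $\eps$ this gives $f_n\stackrel\qu\to f_\infty$ along the chosen subsequence.

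\emph{Main obstacle.} The substantive part is ii). Because $\Cap$ is only countably subadditive, not $\sigma$-additive, the exceptional sets $E_n$ cannot be controlled directly; the remedy -- exactly as in Theorem \ref{thm:L0_complete}, ultimately relying on item c) of Proposition \ref{prop:properties_mu} -- is to pass first to a subsequence along which $\sfd_\qu$ decays geometrically, so that $\Cap\big(\bigcup_{n\ge J}E_n\big)$ is small. A second, more technical, point is that a set $E_n$ extracted from the infimum \eqref{eq:defdqu} only controls its traces $E_n\cap A_k$ and not $E_n$ itself, which is why every estimate -- and in particular the definition of $E$ -- must be localised to the single index $k_0$ determined by the given bounded set $B$. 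Part i) is comparatively routine once one notices that a single $E$ serves all the finitely many relevant indices $k\le K$ simultaneously.
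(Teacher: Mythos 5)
Your proof is correct and follows essentially the same strategy as the paper's: in ii) both arguments pass to a subsequence with geometrically decaying $\sfd_\qu$, pick near-optimal sets $E_n$ from the infimum, and take the union of the traces $E_n\cap A_{k_0}$ on the single $A_{k_0}$ containing the given bounded set, relying only on countable subadditivity of $\Cap$. The only (cosmetic) difference is in i), where you exploit monotonicity of $(A_k)_k$ to use a single exceptional set for $A_K$, whereas the paper unions a family $(E_k)_k$ with $\Cap(E_k)<\eps 2^{-k}$; both versions implicitly use that the $A_k$ can be taken bounded, which you rightly flag.
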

\begin{proof}\ \\
{\bf (i)} Let $\eps>0$ and use the definition of local quasi-uniform convergence
to find some subsets $(E_k)_{k\in\N}$ of $\X$ such that $\Cap(E_k)<\eps/2^k$ and
$f_n\to f_\infty$ uniformly on $A_k\setminus E_k$ for any $k\in\N$.
Choosing the set $E:=\bigcup_{k\in\N}E_k$ in \eqref{eq:defdqu} yields
(for $\bar k\in\N$ sufficiently big)
\[\begin{split}\lims_{n\to\infty}\sfd_\qu(f_n,f_\infty)&\leq
\eps+\lims_{n\to\infty}\sum_{k=1}^{\bar k}
\bigg(\frac{\Cap(E\cap A_k)}
{2^k\big(\Cap(A_k)\vee 1\big)}+\frac{1}{2^k}
\sup_{x\in A_k\setminus E}\big|f_n(x)-f_\infty(x)\big|\wedge 1\bigg)\\
&\leq\eps+\lims_{n\to\infty}\sum_{k=1}^{\bar k}\bigg(\frac{\eps}
{2^k}+\sup_{x\in A_k\setminus E_k}\big|f_n(x)-f_\infty(x)\big|\bigg)\\
&\leq 2\,\eps+\sum_{k=1}^{\bar k}\lims_{n\to\infty}
\sup_{x\in A_k\setminus E_k}\big|f_n(x)-f_\infty(x)\big|=2\,\eps,
\end{split}\]
and the conclusion follows by the arbitrariness of $\eps$.\\
{\bf (ii)} We shall prove that if $\sum_n\sfd_\qu(f_n,f_\infty)<+\infty$
then $f_n\stackrel\qu\to f_\infty$. First, choose a sequence $(E_n)_n$
of subsets of $\X$ such that
\begin{equation}\label{eq:dqu_aux}
\sum_{n\in\N}\sum_{k\in\N}\bigg(\frac{\Cap(E_n\cap A_k)}
{2^k\big(\Cap(A_k)\vee 1\big)}+\frac{1}{2^k}
\sup_{A_k\setminus E_n}|f_n-f_\infty|\wedge 1\bigg)<+\infty.
\end{equation}
Let $\eps>0$ and $B\subset\X$ bounded be fixed. Pick $\bar k\in\N$ such
that $B\subset A_{\bar k}$. Then \eqref{eq:dqu_aux} grants the existence
of $\bar n\in\N$ with $\sum_{n\geq\bar n}\Cap(E_n\cap A_{\bar k})<\eps$,
thus $E:=\bigcup_{n\geq\bar n}E_n\cap A_{\bar k}$ satisfies $\Cap(E)<\eps$.
Therefore we have that
\[\sum_{n\geq\bar n}\sup_{B\setminus E}|f_n-f_\infty|\wedge 1
\leq\sum_{n\geq\bar n}\sup_{A_{\bar k}\setminus E_n}|f_n-f_\infty|\wedge 1
=2^{\bar k}\sum_{n\geq\bar n}\frac{1}{2^{\bar k}}
\sup_{A_{\bar k}\setminus E_n}|f_n-f_\infty|\wedge 1
\overset{\eqref{eq:dqu_aux}}<+\infty,\]
whence accordingly $f_n\to f_\infty$ uniformly on $B\setminus E$.
This grants that $f_n\stackrel\qu\to f_\infty$, as required.
\end{proof}
\begin{proposition}\label{prop:QC_complete}
The following properties hold:
\begin{itemize}
\item[i)] The metric space $\big(\Cqc,\sfd_\qu\big)$ is complete.
\item[ii)] It holds that
$\sfd_\Cap(f,g)\leq\sfd_\qu(f,g)\leq 2\,\sqrt{\sfd_\Cap(f,g)}$ for every
$f,g\in\Cqc$. In particular, the canonical embedding of $\Cqc$ in $L^0(\Cap)$
is continuous and has closed image.
\item[iii)] $\Cqc$ is the closure in $L^0(\Cap)$ of the space of (equivalence classes up to $\Cap$-null sets of) continuous functions. \end{itemize}
\end{proposition}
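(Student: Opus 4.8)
The plan is to prove (ii) first by a direct term-by-term comparison of the two defining series, then deduce (i) from (ii) together with the completeness of $L^0(\Cap)$ (Theorem \ref{thm:L0_complete}), and finally obtain (iii) from (i)--(ii) and a Tietze-type extension argument.

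\textbf{Part (ii).} For the left-hand inequality, fix any $E\subset\X$ and use the pointwise bound
\[
\nchi_{A_k}\,(|f-g|\wedge 1)\ \leq\ \nchi_{E\cap A_k}\ +\ \Big(\sup_{x\in A_k\setminus E}|f(x)-g(x)|\wedge 1\Big)\,\nchi_{A_k};
\]
integrating via the monotonicity and subadditivity of the integral w.r.t.\ $\Cap$ (Proposition \ref{prop:properties_mu} and Theorem \ref{thm:subadditivity_theorem}), dividing by $2^k(\Cap(A_k)\vee 1)$ (and using $\Cap(A_k)\leq\Cap(A_k)\vee 1$), summing over $k$ and taking the infimum over $E$ gives $\sfd_\Cap(f,g)\leq\sfd_\qu(f,g)$; no regularity of $f,g$ is needed here. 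For the right-hand inequality, set $\eta:=\sfd_\Cap(f,g)$ and, for $\lambda\in(0,1]$, test the infimum defining $\sfd_\qu(f,g)$ with $E_\lambda:=\{|f-g|\geq\lambda\}$: then $\sup_{A_k\setminus E_\lambda}|f-g|\wedge1\leq\lambda$, while Chebyshev's inequality (cf.\ Proposition \ref{prop:properties_mu}) gives $\Cap(E_\lambda\cap A_k)=\Cap\big(A_k\cap\{|f-g|\wedge1\geq\lambda\}\big)\leq\lambda^{-1}\int_{A_k}|f-g|\wedge1\,\d\Cap$; summing over $k$ (and bounding $\sum_{k\in\N}2^{-k}\leq1$) yields $\sfd_\qu(f,g)\leq\lambda+\lambda^{-1}\eta$. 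Taking $\lambda=\sqrt\eta$ when $\eta\leq1$, and recalling $\sfd_\qu\leq1$ otherwise, we obtain $\sfd_\qu(f,g)\leq2\sqrt\eta$.

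\textbf{Part (i).} Let $(f_n)\subset\Cqc$ be $\sfd_\qu$-Cauchy. By (ii) it is $\sfd_\Cap$-Cauchy, hence $\sfd_\Cap$-converges to some $f_\infty\in L^0(\Cap)$ by Theorem \ref{thm:L0_complete}; it remains to show that $f_\infty$ has a quasi-continuous representative and that $\sfd_\qu(f_n,f_\infty)\to0$. Pass to a subsequence with $\sfd_\qu(f_{n_j},f_{n_{j+1}})<2^{-j}$, pick for each $j$ a set $H_j\subset\X$ attaining the defining infimum within $2^{-j}$, and a set $U_j$ (open, w.l.o.g.) with $\Cap(U_j)<2^{-j}$ off which $f_{n_j}$ is continuous. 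Fix $\bar k\in\N$ and $\eps>0$: reading off the $\bar k$-th term of the series for $H_j$ gives $\Cap(H_j\cap A_{\bar k})<2^{-j}\,2^{\bar k}(\Cap(A_{\bar k})\vee1)$ and $\sup_{A_{\bar k}\setminus H_j}|f_{n_j}-f_{n_{j+1}}|\wedge1<2^{\bar k-j}$, both summable in $j$. Choose $\bar j>\bar k$ with $\sum_{j\geq\bar j}\big(\Cap(H_j\cap A_{\bar k})+2^{-j}\big)<\eps$ and set $E:=\bigcup_{j\geq\bar j}\big((H_j\cap A_{\bar k})\cup U_j\big)$, so $\Cap(E)<\eps$ by countable subadditivity. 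On $A_{\bar k}\setminus E$ every $f_{n_j}$ with $j\geq\bar j$ is continuous and satisfies $|f_{n_j}-f_{n_{j+1}}|<2^{\bar k-j}$, hence $(f_{n_j})_{j\geq\bar j}$ is uniformly Cauchy there and converges uniformly to a continuous function $g$. Since $f_{n_j}\to f_\infty$ in $L^0(\Cap)$, a further subsequence converges $\Cap$-a.e.\ (Proposition \ref{prop:Sf_dense}), so $g=f_\infty$ off a $\Cap$-null set $N$, whence $f_{n_j}\to f_\infty$ uniformly on $A_{\bar k}\setminus(E\cup N)$, a set whose complement inside $A_{\bar k}$ has capacity $<\eps$. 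Running this with $\bar k=m$ and tolerance $\eps\,2^{-m}$ for each $m\in\N$ and taking the union over $m$ of the exceptional sets produces, for every $\eps>0$, a single set of capacity $<\eps$ off which $f_\infty$ is continuous --- here one uses that $\{A_m\}_m$ is an open cover of $\X$, so continuity of $f_\infty$ on each $A_m$ minus a common bad set patches to continuity on the complement of that set. Thus $f_\infty\in\Cqc$; moreover we have shown $f_{n_j}\stackrel\qu\to f_\infty$, so $\sfd_\qu(f_{n_j},f_\infty)\to0$ by Proposition \ref{prop:dqu}(i), which by the Cauchy property upgrades to $\sfd_\qu(f_n,f_\infty)\to0$.

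\textbf{Part (iii).} Continuous functions are quasi-continuous, so $\mathcal C(\X)\subset\Cqc$; by (i)--(ii) the image of $\Cqc$ in $L^0(\Cap)$ is closed (a $\sfd_\Cap$-Cauchy sequence in $\Cqc$ is $\sfd_\qu$-Cauchy by the right inequality in (ii), hence $\sfd_\qu$-converges in $\Cqc$ by (i), hence $\sfd_\Cap$-converges to the same limit), which also establishes the last assertion of (ii). It remains to see that $\mathcal C(\X)$ is $\sfd_\Cap$-dense in $\Cqc$: given quasi-continuous $f$, take the increasing closed sets $(C_n)_n$ with $\Cap(\X\setminus C_n)\to0$ on which $f$ is continuous (as recalled before the statement), extend $f|_{C_n}$ to a continuous $\tilde f_n:\X\to\R$ by the Tietze extension theorem (applicable since metric spaces are normal), and note that $|f-\tilde f_n|\wedge1\leq\nchi_{\X\setminus C_n}$, whence $\int_{A_k}|f-\tilde f_n|\wedge1\,\d\Cap\leq\Cap(\X\setminus C_n)$ for all $k$ and $\sfd_\Cap(f,\tilde f_n)\leq\big(\sum_{k\in\N}2^{-k}\big)\Cap(\X\setminus C_n)\to0$. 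Hence $\Cqc$ coincides with the $\sfd_\Cap$-closure of $\mathcal C(\X)$.

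The main obstacle is the bookkeeping in part (i): one must simultaneously control the capacity of the exceptional sets and the uniform size of the increments $f_{n_j}-f_{n_{j+1}}$ across the weighted series over the $A_k$'s, absorb the extra sets $U_j$ that are needed because the $f_n$ are merely quasi-continuous rather than continuous, and then patch the local quasi-uniform convergence on the $A_m$'s into genuine quasi-continuity of the limit. Parts (ii) and (iii) are routine once this is in place.
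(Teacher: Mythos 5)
Your proposal is correct and follows essentially the same route as the paper: the two inequalities in (ii) are proved by the same splitting/Chebyshev arguments, (i) uses the same mechanism of a fast subsequence, exceptional sets combining the near-optimal sets in the definition of $\sfd_\qu$ with the sets where the $f_{n_j}$ fail to be continuous, uniform convergence on $A_k$ minus a small-capacity set, and patching over the $A_k$'s, and (iii) is the same Tietze-extension argument. The only (harmless) organizational differences are that you identify the limit in (i) via Theorem \ref{thm:L0_complete} and $\Cap$-a.e.\ convergence of a subsequence instead of constructing it directly from the local uniform limits, and in (iii) you estimate $\sfd_\Cap$ directly rather than going through $\sfd_\qu$.
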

\begin{proof}\ \\
{\bf (i)} To prove completeness, fix a $\sfd_\qu$-Cauchy sequence $(f_n)_n$ of
quasi-continuous functions. Up to passing to a (not relabeled) subsequence,
we can suppose that $\sfd_\qu(f_n,f_{n+1})<2^{-n}$ for all $n$. For any $n\in\N$
we can pick a set $E_n\subset\X$ such that the function $f_n$ is continuous
on $\X\setminus E_n$ and
\begin{equation}\label{eq:QC_complete_aux}
\sum_{k\in\N}\bigg(\frac{\Cap(E_n\cap A_k)}{2^k\big(\Cap(A_k)\vee 1\big)}
+\frac{1}{2^k}\sup_{A_k\setminus E_n}|f_n-f_{n+1}|\wedge 1\bigg)<\frac{1}{2^n}.
\end{equation}
Now define $F_{k,m}:=\bigcup_{n\geq m}E_n\cap A_k$ and
$F_k:=\bigcap_{m\in\N}F_{k,m}$ for every $k,m\in\N$.
Hence one has
\[\sum_{n\geq m}\sup_{A_k\setminus F_{k,m}}|f_n-f_{n+1}|\wedge 1
\leq\sum_{n\geq m}\sup_{A_k\setminus E_n}|f_n-f_{n+1}|\wedge 1
\overset{\eqref{eq:QC_complete_aux}}<2^k\sum_{n\geq m}\frac{1}{2^n}<+\infty\]
for any given $k,m\in\N$, so that $f_n\stackrel n\to g_m$ uniformly on the set
$A_k\setminus F_{k,m}$ for some continuous function $g_{k,m}:\,A_k\setminus F_{k,m}\to\R$.
Let us set
\[f_\infty(x):=\left\{\begin{array}{ll}
g_{k,m}(x)\\
0
\end{array}\quad\begin{array}{ll}
\text{ if }x\in A_k\setminus F_{k,m}\text{ for some }k,m\in\N,\\
\text{ if }x\in\X\setminus\big(\bigcup_{k\in\N}A_k\setminus F_k\big).
\end{array}\right.\]
Clearly $f_\infty$ is well-defined as $g_{k,m}=g_{k',m'}$ on
$(A_k\setminus F_{k,m})\cap(A_{k'}\setminus F_{k',m'})$ for all $k,k',m,m'\in\N$.
Moreover, we know from \eqref{eq:QC_complete_aux} that
$\sum_{n\in\N}\Cap(E_n\cap A_k)\leq
2^k\big(\Cap(A_k)\vee 1\big)\sum_{n\in\N}2^{-n}<+\infty$ holds for every $k\in\N$,
whence item c) of Proposition \ref{prop:properties_mu} ensures that
$\Cap(F_k)=0$ for all $k\in\N$. By item b) of Proposition \ref{prop:properties_mu}
we see that
\[\begin{split}
\Cap\big(\X\setminus\big({\textstyle\bigcup\nolimits_{k\in\N}}A_k\setminus F_k\big)\big)
&=\lim_{j\to\infty}\Cap\big(A_j\setminus
\big({\textstyle\bigcup\nolimits_{k\in\N}}A_k\setminus F_k\big)\big)
\leq\limi_{j\to\infty}\Cap\big(A_j\setminus(A_j\setminus F_j)\big)\\
&=\lim_{j\to\infty}\Cap(F_j)=0,
\end{split}\]
which shows that the function $f_\infty$ is quasi-continuous.
We claim that $f_n\stackrel\qu\to f_\infty$, which is enough to conclude by item i)
of Proposition \ref{prop:dqu}. Given any $\eps>0$ and any bounded set
$B\subset\X$, we can pick $\bar k,\bar n\in\N$ such that $B\subset A_{\bar k}$
and $\Cap(E)<\eps$, where we set $E:=E_{\bar n}\cap A_{\bar k}$.
Therefore we have
\[\sup_{B\setminus E}|f_n-f_\infty|\leq
\sup_{A_{\bar k}\setminus E_{\bar n}}|f_n-g_{\bar k,\bar n}|\overset{n}\longrightarrow 0,\]
which implies that $f_n\stackrel\qu\to f_\infty$, as desired.\\
%
{\bf (ii)} Fix $f,g\in\Cqc$ and take $(A_k)_k$ as in Definition \ref{def:L0(Cap)}.
Given any $E\subset\X$, it holds that
\[\begin{split}
\int_{A_k}|f-g|\wedge 1\,\d\Cap&\leq
\int_{E\cap A_k}|f-g|\wedge 1\,\d\Cap+\int_{A_k\setminus E}|f-g|\wedge 1\,\d\Cap\\
&\leq\Cap(E\cap A_k)+\Cap(A_k)\sup_{A_k\setminus E}|f-g|\wedge 1,
\end{split}\]
whence accordingly
\[\frac{1}{2^k\big(\Cap(A_k)\vee 1\big)}\int_{A_k}|f-g|\wedge 1\,\d\Cap
\leq\frac{\Cap(E\cap A_k)}{2^k\big(\Cap(A_k)\vee 1\big)}
+\frac{1}{2^k}\sup_{A_k\setminus E}|f-g|\wedge 1\]
for every $k\in\N$. By summing over $k\in\N$ and then passing to the infimum
over $E\subset\X$, we conclude that $\sfd_\Cap(f,g)\leq\sfd_\qu(f,g)$.

On the other hand, let us consider the set $E_\lambda:=\big\{|f-g|\wedge 1>\lambda\big\}$
for any $\lambda>0$. Therefore for every $k\in\N$ one has that
$\Cap(E_\lambda\cap A_k)\leq\lambda^{-1}\int_{A_k}|f-g|\wedge 1\,\d\Cap$
by item v) of Proposition \ref{prop:properties_mu} and that
$\sup_{A_k\setminus E_\lambda}|f-g|\wedge 1\leq\lambda$, thus accordingly
\[\begin{split}
\sfd_\qu(f,g)&\leq\sum_{k\in\N}\bigg(\frac{\Cap(E_\lambda\cap A_k)}
{2^k\big(\Cap(A_k)\vee 1\big)}+\frac{1}{2^k}
\sup_{x\in A_k\setminus E_\lambda}\big|f(x)-g(x)\big|\wedge 1\bigg)\\
&\leq\lambda+\frac{1}{\lambda}\sum_{k\in\N}\frac{1}{2^k\big(\Cap(A_k)\vee 1\big)}
\int_{A_k}|f-g|\wedge 1\,\d\Cap
=\lambda+\frac{\sfd_\Cap(f,g)}{\lambda}.
\end{split}\]
By letting $\lambda\downarrow\sqrt{\sfd_\Cap(f,g)}$ we conclude
that $\sfd_\qu(f,g)\leq 2\,\sqrt{\sfd_\Cap(f,g)}$, as required.\\
{\bf (iii)} Let $f:\X\to\R$ be a Borel function whose equivalence class up to $\Cap$-null sets belongs to $\Cqc$ and $\eps>0$. Then by definition there is an open set $\Omega$ with $\Cap(\Omega)<\eps$ and $f\restr{\X\setminus\Omega}$ is continuous. By the Tietze extension theorem there is $g\in C(\X)$ which agrees with $f$ on $\X\setminus\Omega$, and -- since this latter condition ensures that $\sfd_\qu(f,g)<\eps$ -- the proof is achieved.
\end{proof}

We now turn to the relation between quasi-continuity and Sobolev functions,
and to do so it is useful to emphasise whether we speak about functions up
to $\Cap$-null sets or up to $\mm$-null sets. We shall therefore write
$[f]_\Cap$ (resp.\ $[f]_\mm$) for the equivalence class of the Borel function
$f:\,\X\to\R$ up to $\Cap$-null (resp.\ $\mm$-null) sets. 

We start noticing that -- since $\mm$ is absolutely continuous with respect
to $\Cap$, i.e.\ $\Cap$-null sets are also $\mm$-null (recall $(i)$ of Proposition \ref{prop:properties_cap}) -- there is a natural projection map
\begin{equation}\label{eq:Pi_fcs}\begin{split}
\Pr:\,L^0(\Cap)&\longrightarrow L^0(\mm),\\
[f]_\Cap&\longmapsto[f]_\mm.
\end{split}\end{equation}
Since in general there are $\mm$-null sets which are not $\Cap$-null, such projection
operator is typically non-injective. This is why the following result is interesting:
\begin{proposition}[Uniqueness of quasi-continuous representative] 
\label{prop:eqmmimplieseqCap}
Let $f,g:\,\X\to\R$ be quasi-continuous functions. Then $f=g$ $\mm$-a.e.\ implies $f=g$
$\Cap$-a.e.. In other words,
\[\Pr\restr{\Cqc}:\,\Cqc\longrightarrow L^0(\mm)\]
is an injective map.
\end{proposition}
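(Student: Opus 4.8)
The plan is to reduce to a single quasi-continuous function vanishing $\mm$-a.e.\ and then read off the conclusion from the very definition of capacity. First I would set $h:=f-g$ and observe that $h$ is again quasi-continuous: given $\eps>0$, pick $E_f,E_g\subset\X$ with $\Cap(E_f),\Cap(E_g)<\eps/2$ and $f|_{\X\setminus E_f}$, $g|_{\X\setminus E_g}$ continuous, so that on $\X\setminus(E_f\cup E_g)$ — a set whose complement has capacity $<\eps$ by subadditivity — the function $h$ is continuous. Since $f=g$ $\mm$-a.e.\ we have $h=0$ $\mm$-a.e., and the assertion to prove is exactly $\Cap(\{h\neq 0\})=0$. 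As $\{h\neq 0\}=\bigcup_{n\geq 1}\{|h|>1/n\}$ and $\Cap$ is countably subadditive (Proposition \ref{prop:properties_cap}), it suffices to show $\Cap(\{|h|>t\})=0$ for each fixed $t>0$.

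To this end I would fix $t>0$ and an arbitrary $\delta>0$ and, using quasi-continuity in the version with open exceptional sets, choose an \emph{open} set $\Omega\subset\X$ with $\Cap(\Omega)<\delta$ and $h|_{\X\setminus\Omega}$ continuous. Then $A:=\{|h|>t\}\setminus\Omega=(|h|_{\X\setminus\Omega})^{-1}\big((t,+\infty)\big)$ is relatively open in $\X\setminus\Omega$, so $A=V\cap(\X\setminus\Omega)$ for some open $V\subset\X$, and hence $V=(V\cap\Omega)\cup A$. Moreover $\mm(A)=0$, since $A\subset\{h\neq 0\}$. From $\{|h|>t\}\subset\Omega\cup A$ and subadditivity, $\Cap(\{|h|>t\})\leq\Cap(\Omega)+\Cap(A)<\delta+\Cap(A)$, so the whole matter reduces to bounding $\Cap(A)$.

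The key step — and the only point requiring a little care — is the estimate $\Cap(A)\leq\Cap(\Omega)$, which I would obtain as follows. Because $\mm(A)=0$ and $V=(V\cap\Omega)\cup A$, a function $u\in W^{1,2}(\X)$ is $\geq 1$ $\mm$-a.e.\ on the open set $V\cap\Omega$ if and only if it is $\geq 1$ $\mm$-a.e.\ on $V$; in particular every element of $\mathcal F_{V\cap\Omega}$ lies in $\mathcal F_A$ (using that $V$ is an open neighbourhood of $A$), i.e.\ $\mathcal F_{V\cap\Omega}\subset\mathcal F_A$. Passing to the infimum of $\|\cdot\|_{W^{1,2}(\X)}^2$ over these two families gives $\Cap(A)\leq\Cap(V\cap\Omega)\leq\Cap(\Omega)<\delta$, where the middle inequality is just monotonicity of the outer measure $\Cap$. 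Plugging this back in yields $\Cap(\{|h|>t\})<2\delta$, and letting $\delta\downarrow 0$ finishes the proof. I expect the main conceptual obstacle to be exactly this recognition that a relatively open, $\mm$-negligible ``bad set'' can be swallowed by an honest open neighbourhood that differs from a genuinely open set only by an $\mm$-null piece; once that is seen, no capacitability argument or capacitary potentials are needed.
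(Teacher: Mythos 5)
Your argument is correct and rests on exactly the same mechanism as the paper's proof: the set where the functions differ is relatively open outside the small open exceptional set $\Omega$, and since the definition of $\Cap$ only involves being $\geq 1$ $\mm$-a.e.\ on open neighbourhoods, this $\mm$-null relatively open piece has capacity controlled by $\Cap(\Omega)$ (the paper phrases this as $\Cap(N\cup\Omega)=\Cap(\Omega)$ for the open set $N\cup\Omega$, you phrase it as $\mathcal F_{V\cap\Omega}\subset\mathcal F_A$; these are the same observation). Your preliminary reductions -- passing to $h=f-g$ and to the superlevel sets $\{|h|>t\}$ via countable subadditivity -- are harmless but unnecessary, since $\{h\neq 0\}$ is itself relatively open in $\X\setminus\Omega$ and can be treated in one stroke, as the paper does with $N=\{f\neq g\}$.
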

\begin{proof}
Let $N:=\{f\neq g\}$. Let $\Omega\subset\X$ open be such that $f,g$
are continuous on $\X\setminus\Omega$. Thus $N$ is open in $\X\setminus\Omega$
and therefore $\tilde \Omega:=N\cup\Omega$ is open in $\X$. By assumption we
know that $\mm(N)=0$ and thus the very definition of capacity yields
$\Cap(\Omega)=\Cap(\tilde\Omega)$. Hence
\[\Cap(N)\leq \Cap(\tilde\Omega)=\Cap(\Omega)\]
and the quasi-continuity assumption gives the conclusion.
\end{proof}
\begin{proposition}\label{prop:linkqusob}
Let $f,g\in C(\X)$ be such that $[f]_\mm,[g]_\mm\in W^{1,2}(\X)$. Then
\[\sfd_\qu\big([f]_\Cap,[g]_\Cap\big)\leq
3\,{\big\|[f]_\mm-[g]_\mm\big\|}_{W^{1,2}(\X)}^{\frac{2}{3}}.\] 
\end{proposition}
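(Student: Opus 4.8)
The plan is to control $\sfd_\qu\big([f]_\Cap,[g]_\Cap\big)$ via a Chebyshev-type estimate on the capacity of the superlevel sets of the continuous function $h:=f-g$, which is a representative of $[f]_\Cap-[g]_\Cap$ and whose $\mm$-equivalence class $[h]_\mm=[f]_\mm-[g]_\mm$ lies in $W^{1,2}(\X)$; write $M:=\big\|[h]_\mm\big\|_{W^{1,2}(\X)}$ for brevity. Since $\sfd_\qu$ is invariant under $\Cap$-null modifications of its entries, it is harmless to compute it on the continuous representatives $f,g$.

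\emph{Step 1: a capacitary Chebyshev inequality.} I would first show that
\[\Cap\big(\{|h|>\lambda\}\big)\le\frac{M^2}{\lambda^2}\qquad\text{for all }\lambda>0.\]
This is where continuity is essential: as $h$ is continuous, $\Omega_\lambda:=\{|h|>\lambda\}$ is open, hence qualifies as an open neighbourhood of itself in the definition of $\Cap$. By the lattice property of $W^{1,2}(\X)$ one has $[|h|]_\mm\in W^{1,2}(\X)$, and the locality of the minimal weak upper gradient \eqref{eq:localwug} (applied on $\{h>0\}$, $\{h<0\}$, $\{h=0\}$, arguing as in the proof of Proposition \ref{prop:properties_cap}) gives $|D|h||=|Dh|$ $\mm$-a.e.; hence $v:=\lambda^{-1}|h|$ satisfies $[v]_\mm\in W^{1,2}(\X)$ with $\|[v]_\mm\|^2_{W^{1,2}(\X)}=M^2/\lambda^2$. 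Since $v>1$ on the open set $\Omega_\lambda$, we get $[v]_\mm\in\mathcal F_{\Omega_\lambda}$, so that $\Cap(\Omega_\lambda)\le\|[v]_\mm\|^2_{W^{1,2}(\X)}=M^2/\lambda^2$, as wanted.

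\emph{Step 2: insertion into $\sfd_\qu$ and optimisation in $\lambda$.} I would then use $E:=\Omega_\lambda$ as a competitor in the infimum \eqref{eq:defdqu}. On $\X\setminus E=\{|h|\le\lambda\}$ we have $|f-g|\wedge 1=|h|\wedge 1\le\lambda$, so every supremum term is $\le\lambda$; and since $\Cap(E\cap A_k)\le\Cap(E)\le M^2/\lambda^2$ while $\Cap(A_k)\vee 1\ge1$, each capacity term is $\le 2^{-k}M^2/\lambda^2$. Summing over $k$ (the series summing to $1$, consistently with the bound $\sfd_\qu\le 1$ noted after \eqref{eq:defdqu}) gives
\[\sfd_\qu\big([f]_\Cap,[g]_\Cap\big)\le\frac{M^2}{\lambda^2}+\lambda\qquad\text{for all }\lambda>0.\]
Choosing $\lambda:=M^{2/3}$ when $M>0$ yields $\sfd_\qu\le 2M^{2/3}\le 3M^{2/3}$ (the optimal choice $\lambda=(2M^2)^{1/3}$ would even give the constant $3\cdot2^{-2/3}$), while letting $\lambda\downarrow0$ when $M=0$ gives $\sfd_\qu=0$; either way the claim follows.

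The only delicate point is Step 1 — checking that $\lambda^{-1}|h|$ is a bona fide competitor for $\Cap(\Omega_\lambda)$ — and it hinges on exactly the two hypotheses at hand: continuity of $f$ and $g$ (so that $\Omega_\lambda$ is open, hence admissible with no loss in the definition of capacity) and the lattice/locality structure of $W^{1,2}(\X)$ (so that $|h|\in W^{1,2}(\X)$ with the same minimal weak upper gradient as $h$). Once these are in place, the manipulation of the series in \eqref{eq:defdqu} and the one-variable optimisation are routine.
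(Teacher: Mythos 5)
Your proposal is correct and follows essentially the same route as the paper: take $\Omega_\lambda=\{|f-g|>\lambda\}$ (open by continuity), use $\lambda^{-1}\big[|f-g|\big]_\mm\in\mathcal F_{\Omega_\lambda}$ to get the capacitary Chebyshev bound $\Cap(\Omega_\lambda)\leq\lambda^{-2}\big\|[f]_\mm-[g]_\mm\big\|_{W^{1,2}(\X)}^2$, plug $E=\Omega_\lambda$ into \eqref{eq:defdqu}, and optimise in $\lambda$. Your explicit justification that $|f-g|$ has the same $W^{1,2}$-norm (via the lattice and locality properties) and your separate treatment of the degenerate case $M=0$ are minor refinements of what the paper leaves implicit, not a different argument.
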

\begin{proof}
For any $\lambda>0$ let $\Omega_\lambda:=\big\{|f-g|>\lambda\big\}$,
so that by definition of $\sfd_\qu$ we have
\begin{equation}\label{eq:linkqusob_aux}
\sfd_\qu\big([f]_\Cap,[g]_\Cap\big)\leq\lambda+
\sum_{k\in\N}\frac{\Cap(\Omega_\lambda\cap A_k)}{2^k\big(\Cap(A_k)\vee 1\big)}.
\end{equation}
Notice that $\Omega_\lambda$ is an open set by continuity of $|f-g|$.
Moreover, $\lambda^{-1}\,\big[|f-g|\big]_\mm$ is a Sobolev function satisfying
$\lambda^{-1}\,\big[|f-g|\big]_\mm\geq 1$ $\mm$-a.e.\ on $\Omega_\lambda$.
Hence $\Cap(\Omega_\lambda\cap A_k)\leq\lambda^{-2}
\,{\big\|[f-g]_\mm\big\|}_{W^{1,2}(\X)}^2$ holds for all $k\in\N$.
Plugging this estimate in \eqref{eq:linkqusob_aux} we obtain that
\[\sfd_\qu\big([f]_\Cap,[g]_\Cap\big)\leq\lambda+2
\frac{{\big\|[f]_\mm-[g]_\mm\big\|}_{W^{1,2}(\X)}^2}{\lambda^2},\]
then by choosing $\lambda:={\big\|[f]_\mm-[g]_\mm\big\|}_{W^{1,2}(\X)}^{\frac{2}{3}}$
we get the conclusion.
\end{proof}
Collecting these last two propositions we obtain the following result:
\begin{theorem}[Quasi-continuous representative of Sobolev function] 
\label{thm:qcrepresentative}
Suppose that (equivalence classes up to $\mm$-a.e.\ equality of)
continuous functions in $W^{1,2}(\X)$ are dense in $W^{1,2}(\X)$.
Then there exists a unique continuous map
\begin{equation}
\qcr:\,W^{1,2}(\X)\longrightarrow\Cqc
\end{equation}
such that the composition $\Pr\circ \qcr$
is the inclusion map $W^{1,2}(\X)\subset L^0(\mm)$.

Moreover, $\qcr$ is linear and satisfies
\begin{equation}
\label{eq:normqcr}
\big|\qcr (f)\big|=\qcr\big(|f|\big)\;\;\;\Cap\text{-a.e.}
\quad\text{ for every }f\in W^{1,2}(\X).
\end{equation}
Finally, if $[f_n]_\mm\to [f]_\mm$ in $W^{1,2}(\X)$, then any subsequence
has a further subsequence converging locally quasi-uniformly. 
\end{theorem}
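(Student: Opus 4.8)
The plan is to obtain $\qcr$ as the unique continuous extension, from a dense subset of $W^{1,2}(\X)$ to the complete space $\big(\Cqc,\sfd_\qu\big)$, of the natural assignment $[g]_\mm\mapsto[g]_\Cap$ defined on continuous Sobolev functions. Set $\mathcal C:=\big\{[g]_\mm\in W^{1,2}(\X):g\in C(\X)\big\}$, which is dense in $W^{1,2}(\X)$ by hypothesis. The assignment $[g]_\mm\mapsto[g]_\Cap\in\Cqc$ is well posed on $\mathcal C$: if $g,g'\in C(\X)$ coincide $\mm$-a.e.\ then they coincide $\Cap$-a.e.\ by Proposition \ref{prop:eqmmimplieseqCap} (continuous functions are quasi-continuous). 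By Proposition \ref{prop:linkqusob} this assignment is uniformly continuous from $\big(\mathcal C,\|\cdot\|_{W^{1,2}(\X)}\big)$ to $\big(\Cqc,\sfd_\qu\big)$; since the latter is complete by Proposition \ref{prop:QC_complete}(i), it extends to a unique uniformly continuous map $\qcr:W^{1,2}(\X)\to\Cqc$, explicitly $\qcr(f):=\sfd_\qu\text{-}\lim_n[g_n]_\Cap$ for any $g_n\in C(\X)$ with $[g_n]_\mm\to f$ in $W^{1,2}(\X)$. Because $\sfd_\Cap\le\sfd_\qu\le 2\sqrt{\sfd_\Cap}$ on $\Cqc$ (Proposition \ref{prop:QC_complete}(ii)), the $\sfd_\qu$- and $\sfd_\Cap$-topologies agree, so the word ``continuous'' needs no further qualification and $\qcr$ is automatically continuous into $L^0(\Cap)$ as well.

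Next I would check the relation $\Pr\circ\qcr=\big(W^{1,2}(\X)\hookrightarrow L^0(\mm)\big)$. With $g_n$ as above, $[g_n]_\Cap\to\qcr(f)$ in $L^0(\Cap)$, so by Proposition \ref{prop:Sf_dense} a subsequence converges $\Cap$-a.e., hence (since $\Cap$-null sets are $\mm$-null, Proposition \ref{prop:properties_cap}(i)) also $\mm$-a.e., to a Borel representative of $\qcr(f)$; at the same time $[g_n]_\mm\to f$ in $L^2(\mm)$, so a further subsequence converges $\mm$-a.e.\ to $f$. Comparing the two limits gives $\Pr\big(\qcr(f)\big)=[f]_\mm$. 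In particular the limit defining $\qcr(f)$ is independent of the chosen sequence $(g_n)_n$, because any two candidate limits lie in $\Cqc$ with the same image under $\Pr$ and $\Pr|_{\Cqc}$ is injective (Proposition \ref{prop:eqmmimplieseqCap}); the same injectivity gives uniqueness of $\qcr$, since a continuous $T:W^{1,2}(\X)\to\Cqc$ with $\Pr\circ T=$ inclusion must agree with $\qcr$ on the dense set $\mathcal C$ (both send $[g]_\mm$ to the unique element of $\Cqc$ projecting onto $[g]_\mm$, namely $[g]_\Cap$), hence everywhere.

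Linearity is inherited from $\mathcal C$, where it is evident, using that the vector space operations on $L^0(\Cap)$ are continuous (a consequence of the subadditivity of the $\Cap$-integral) and that $\Cqc$ carries the induced topology. For the identity $\big|\qcr(f)\big|=\qcr(|f|)$ $\Cap$-a.e., note that both $\qcr(|f|)$ and $\big|\qcr(f)\big|$ lie in $\Cqc$ ($\Cqc$ is an algebra and stable under $|\cdot|$), and applying $\Pr$ (which commutes with $|\cdot|$) together with $\Pr\circ\qcr=$ inclusion gives $\Pr\big(\big|\qcr(f)\big|\big)=\big|[f]_\mm\big|=\Pr\big(\qcr(|f|)\big)$; injectivity of $\Pr|_{\Cqc}$ then forces $\big|\qcr(f)\big|=\qcr(|f|)$ $\Cap$-a.e. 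Finally, if $[f_n]_\mm\to[f]_\mm$ in $W^{1,2}(\X)$, then $\sfd_\qu\big(\qcr(f_n),\qcr(f)\big)\to 0$ by continuity of $\qcr$, so Proposition \ref{prop:dqu}(ii) yields that every subsequence has a further subsequence with $\qcr(f_{n_k})\stackrel\qu\to\qcr(f)$, i.e.\ converging locally quasi-uniformly.

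The delicate point is the identification $\Pr\circ\qcr=$ inclusion: since $\Cap$-a.e.\ equality is strictly stronger than $\mm$-a.e.\ equality and convergence in $L^0(\Cap)$ only produces $\Cap$-a.e.\ convergent subsequences, one must combine the two extractions — one towards the $L^0(\Cap)$-limit and one towards the $L^2(\mm)$-limit — and use $\mm\ll\Cap$ to glue them. Once this is settled, everything else is the standard ``uniformly continuous extension to a completion'' argument, carried out through Propositions \ref{prop:linkqusob}, \ref{prop:QC_complete} and \ref{prop:eqmmimplieseqCap}.
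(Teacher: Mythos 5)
Your proposal is correct and follows essentially the same route as the paper: the assignment $[g]_\mm\mapsto[g]_\Cap$ on continuous Sobolev functions is forced by Proposition \ref{prop:eqmmimplieseqCap}, extended by uniform continuity via Proposition \ref{prop:linkqusob} and completeness of $(\Cqc,\sfd_\qu)$, with uniqueness from injectivity of $\Pr\restr\Cqc$ and the final statement from Proposition \ref{prop:dqu}. The only (harmless) variations are that you verify $\Pr\circ\qcr=$ inclusion by a double subsequence extraction and prove \eqref{eq:normqcr} via injectivity of $\Pr\restr\Cqc$ rather than by passing the identity to the limit from continuous functions, which if anything avoids having to argue that $f\mapsto|f|$ is continuous on $W^{1,2}(\X)$.
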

\begin{proof}
For $f\in C(\X)$ with $[f]_\mm\in W^{1,2}(\X)$ the requirements for $\qcr(f)$
are that it must belong to $\Cqc$ and satisfy $\Pr\big(\qcr(f)\big)=[f]_\mm$.
Thus Proposition \ref{prop:eqmmimplieseqCap} forces it to be equal to $[f]_\Cap$.
Proposition \ref{prop:linkqusob} ensures that such assignment is Lipschitz as map
from $W^{1,2}(\X)\cap C(\X)$ to $\Cqc$, and thus can be uniquely extended to a continuous
map on the whole $W^{1,2}(\X)$. 

Since $\qcr$ is linear on continuous functions, by continuity it is linear
on the whole $W^{1,2}(\X)$. \eqref{eq:normqcr} is trivial for continuous functions,
thus its validity for general ones follows by continuity.

The last statement is a direct consequence of what already proved
and Proposition \ref{prop:dqu}.
\end{proof}
\section{Main result}
\subsection{\texorpdfstring{$L^0(\Cap)$}{L0(Cap)}-normed \texorpdfstring{$L^0(\Cap)$}{L0(Cap)}-modules}
The language of $L^0(\mm)$-normed $L^0(\mm)$-modules over a metric
measure space $(\X,\sfd,\mm)$ has been proposed and investigated by
the second author in \cite{Gigli14}, with the final aim of developing a
differential calculus on $\sf RCD$ spaces. In the present paper, we
assume the reader to be familiar with such language. We shall use the
term \emph{$L^0(\mm)$-module} in place of $L^0(\mm)$-normed
$L^0(\mm)$-module and we will typically denote by $\mathscr M_\mm$ any
such object. We refer to \cite{Gigli14,Gigli17} for a detailed account
about this topic. Here we introduce a new notion of normed module, called 
\emph{$L^0(\Cap)$-normed $L^0(\Cap)$-module} or, more simply, \emph{$L^0(\Cap)$-module}, in which the measure under consideration
is the capacity $\Cap$ instead of the reference measure $\mm$.
\medskip

Let $(\X,\sfd,\mm)$ be a metric measure space as in \eqref{eq:mms}
and $(A_k)_k$ a sequence as in Definition \ref{def:L0(Cap)}.
\begin{definition}[$L^0(\Cap)$-normed $L^0(\Cap)$-module]\label{def:n_Cap-m}
We say that a quadruple $\big(\mathscr M,\tau,\,\cdot\,,|\cdot|\big)$
is a \emph{$L^0(\Cap)$-normed $L^0(\Cap)$-module} over $(\X,\sfd,\mm)$ provided:
\begin{itemize}
\item[$\rm i)$] $(\mathscr M,\tau)$ is a topological vector space.
\item[$\rm ii)$] The bilinear map $\,\cdot:\,L^0(\Cap)\times\mathscr M
\to\mathscr M$ satisfies $f\cdot(g\cdot v)=(fg)\cdot v$ and $1\cdot v=v$
for every $f,g\in L^0(\Cap)$ and $v\in\mathscr M$.
\item[$\rm iii)$] The map $|\cdot|:\,\mathscr M\to L^0(\Cap)$, called
\emph{pointwise norm}, satisfies
\[\begin{split}
|v|\geq 0&\quad\text{ for every }v\in\mathscr M,\text{ with equality
if and only if }v=0,\\
|v+w|\leq|v|+|w|&\quad\text{ for every }v,w\in\mathscr M,\\
|f\cdot v|=|f||v|&\quad\text{ for every }v\in\mathscr M\text{ and }
f\in L^0(\Cap),
\end{split}\]
where all equalities and inequalities are intended in the $\Cap$-a.e.\ sense.
\item[$\rm iv)$] The distance $\sfd_{\mathscr M}$ on $\mathscr M$, given by
\[\sfd_{\mathscr M}(v,w):=
\sum_{k\in\N}\frac{1}{2^k\big(\Cap(A_k)\vee 1\big)}\int_{A_k}|v-w|\wedge 1\,\d\Cap
\quad\text{ for all }v,w\in\mathscr M,\]
is complete and induces the topology $\tau$.
\end{itemize}
\end{definition}
%
%
%
%
%
\medskip

Much like starting from $L^0(\Cap)$ and passing to the quotient up to $\mm$-a.e.\ equality we obtain $L^0(\mm)$, in the same way by passing to an appropriate quotient starting from an arbitrary $L^0(\Cap)$-module we obtain a $L^0(\mm)$-module. Let us describe this procedure.

Let $\mathscr M$ be a $L^0(\Cap)$-module and define an equivalence relation on it by declaring
\[
v\sim_\mm w\quad\Longleftrightarrow\quad|v-w|=0\;\;\mm\text{-a.e.\ in }\X.
\]
Then we consider the quotient $\mathscr M_\mm:=\mathscr M/\sim_\mm$, the projection map $\Pr_{\mathscr M}:\,\mathscr M\to \mathscr M_\mm$ sending $v$ to its equivalence class $[v]_\mm$ and define the following operations on $\mathscr M_\mm$:
\[\begin{split}
[v]_\mm+[w]_\mm&:=[v+w]_\mm\in\mathscr M_\mm,\\
[f]_\mm\cdot[v]_\mm&:=\big[[f]_\Cap\cdot v\big]_\mm\in\mathscr M_\mm,\\
\big|[v]_\mm\big|&:=\Pr\big(|v|\big)\in L^0(\mm),
\end{split}\]
for every $v,w\in\mathscr M$ and $f:\,\X\to\R$ Borel, where
$\Pr:\,L^0(\Cap)\to L^0(\mm)$ is the projection operator defined in \eqref{eq:Pi_fcs}.  Routine verifications show that with these operations $\mathscr M_\mm$ is a $L^0(\mm)$-module.

For a given $L^0(\Cap)$-module $\mathscr M$, the couple $(\mathscr M_\mm,{\rm Pr}_{\mathscr M})$ is characterized by the following universal property:
\begin{proposition}[Universal property of $(\mathscr M_\mm,{\rm Pr}_{\mathscr M})$]\label{pr:pr} Let $\mathscr M$ be a $L^0(\Cap)$-module and let $(\mathscr M_\mm,{\rm Pr}_{\mathscr M})$ be defined as above. Also, let $\mathscr N_\mm$ be a $L^0(\mm)$-module and $T:\mathscr M\to\mathscr N_\mm$ be  a linear map satisfying 
\begin{equation}
\label{eq:ipT}
\big|T(v)\big|\leq\Pr\big(|v|\big)\;\;\;\mm\text{-a.e.}\quad\text{ for every }v\in\mathscr M.
\end{equation}
Then there is a unique $L^0(\mm)$-linear and continuous map $T_{\rm Pr}:\mathscr M_\mm\to\mathscr N_\mm$ such that the diagram
\begin{equation}
\label{eq:diag}
\begin{tikzpicture}[node distance=2cm, auto]
  \node (S) {$\mathscr M$};
  \node (C) [right of=S] {$\mathscr M_\mm$};
  \node (L) [below  of=C] {$\mathscr N_\mm$};
  \draw[->] (S) to node {${\rm Pr}_{\mathscr M}$} (C);
  \draw[->] (C) to node {$T_{\rm Pr}$} (L);
  \draw[->] (S) to node [swap] {$T$} (L);
\end{tikzpicture}
\end{equation}
commutes. 

In particular, for any other couple $(\mathscr M'_\mm,{\rm Pr}'_{\mathscr M})$ with the same property there is a unique isomorphism $\Phi:\mathscr M_\mm\to \mathscr M'_\mm$ (i.e.\ bijection which preserves the whole structure of $L^0(\Cap)$-module) such that $\Phi\circ\Pr_{\mathscr M}=\Pr_{\mathscr M}'$.
\end{proposition}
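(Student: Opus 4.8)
The plan is to construct $T_{\rm Pr}$ directly by the obvious formula, check well-definedness using hypothesis \eqref{eq:ipT}, verify it is $L^0(\mm)$-linear and continuous, and then deduce uniqueness from the surjectivity of ${\rm Pr}_{\mathscr M}$; the final ``in particular'' clause is then a formal consequence obtained by applying the universal property twice.

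First I would \emph{define} $T_{\rm Pr}$. Given $\xi\in\mathscr M_\mm$, pick any $v\in\mathscr M$ with ${\rm Pr}_{\mathscr M}(v)=[v]_\mm=\xi$ and set $T_{\rm Pr}(\xi):=T(v)$. To see this does not depend on the choice of representative, suppose ${\rm Pr}_{\mathscr M}(v)={\rm Pr}_{\mathscr M}(w)$, i.e.\ $|v-w|=0$ $\mm$-a.e.; then by linearity of $T$ and \eqref{eq:ipT} we get $|T(v)-T(w)|=|T(v-w)|\leq\Pr(|v-w|)=0$ $\mm$-a.e., and since $|\cdot|$ on the $L^0(\mm)$-module $\mathscr N_\mm$ vanishes only at $0$, we conclude $T(v)=T(w)$. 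So $T_{\rm Pr}$ is well-defined and the diagram \eqref{eq:diag} commutes by construction. That ${\rm Pr}_{\mathscr M}$ is surjective is immediate from its definition as a quotient map, so commutativity of \eqref{eq:diag} already pins $T_{\rm Pr}$ down uniquely: if $S\circ{\rm Pr}_{\mathscr M}=T={T_{\rm Pr}}\circ{\rm Pr}_{\mathscr M}$ then $S=T_{\rm Pr}$ on the image of ${\rm Pr}_{\mathscr M}$, which is all of $\mathscr M_\mm$.

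Next I would check that $T_{\rm Pr}$ respects the $L^0(\mm)$-module structure. Additivity is clear since ${\rm Pr}_{\mathscr M}$ is additive and $T$ is linear. For the module multiplication, take $[f]_\mm\in L^0(\mm)$ represented by a Borel $f:\X\to\R$ and $\xi=[v]_\mm$; by the definition of the multiplication on $\mathscr M_\mm$ recalled above, $[f]_\mm\cdot\xi=\big[[f]_\Cap\cdot v\big]_\mm$, so $T_{\rm Pr}([f]_\mm\cdot\xi)=T([f]_\Cap\cdot v)$. Here there is a small subtlety: $T$ is only assumed $\R$-linear, not $L^0(\Cap)$-linear, so one cannot immediately pull $[f]_\Cap$ out. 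The way around this is the standard one: the identity $T([f]_\Cap\cdot v)=[f]_\mm\cdot T(v)$ holds trivially when $f$ is a simple (indeed constant-on-a-Borel-partition) function by $\R$-linearity of $T$ together with the locality encoded in $|T(\chi_E\cdot v)|\leq\Pr(|\chi_E\cdot v|)=\Pr(\chi_E)|T(v)|$-type estimates forcing $T$ to be local, and then one extends to general $[f]_\Cap\in L^0(\Cap)$ by approximating $f$ with simple functions (using Proposition \ref{prop:Sf_dense}) and invoking the continuity of $T_{\rm Pr}$, to be established next. Continuity of $T_{\rm Pr}$ with respect to $\sfd_{\mathscr M_\mm}$ follows from \eqref{eq:ipT} directly: $|T_{\rm Pr}(\xi)-T_{\rm Pr}(\eta)|=|T(v-w)|\leq\Pr(|v-w|)$ $\mm$-a.e., so $\sfd_{\mathscr N_\mm}(T_{\rm Pr}(\xi),T_{\rm Pr}(\eta))\leq\sfd_{\mathscr M_\mm}(\xi,\eta)$, i.e.\ $T_{\rm Pr}$ is $1$-Lipschitz.

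Finally, for the ``in particular'' statement, apply the universal property to the couple $(\mathscr M'_\mm,{\rm Pr}'_{\mathscr M})$ in the role of $(\mathscr N_\mm,T)$ — note ${\rm Pr}'_{\mathscr M}$ satisfies \eqref{eq:ipT} with equality — to get a map $\Phi:\mathscr M_\mm\to\mathscr M'_\mm$ with $\Phi\circ{\rm Pr}_{\mathscr M}={\rm Pr}'_{\mathscr M}$, and symmetrically a map $\Psi:\mathscr M'_\mm\to\mathscr M_\mm$ with $\Psi\circ{\rm Pr}'_{\mathscr M}={\rm Pr}_{\mathscr M}$. Then $\Psi\circ\Phi$ and the identity both make the diagram for $(\mathscr M_\mm,{\rm Pr}_{\mathscr M})$ with $(\mathscr N_\mm,T)=(\mathscr M_\mm,{\rm Pr}_{\mathscr M})$ commute, so by the uniqueness clause $\Psi\circ\Phi=\mathrm{Id}$; symmetrically $\Phi\circ\Psi=\mathrm{Id}$, so $\Phi$ is the desired isomorphism, and it preserves the pointwise norm and module operations because it is itself a $T_{\rm Pr}$-type map. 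I expect the only genuinely delicate point to be the passage from $\R$-linearity of $T$ to $L^0(\mm)$-linearity of $T_{\rm Pr}$ — establishing the requisite locality of $T$ and then the approximation argument — everything else being a routine diagram chase; this should be flagged carefully even though it is standard for normed modules.
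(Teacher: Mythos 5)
Your proposal is correct and follows essentially the same route as the paper: well-definedness and $1$-Lipschitz continuity from \eqref{eq:ipT}, then $L^0(\mm)$-linearity via the locality estimate $\big|T([\chi_E]_\Cap\, v)\big|\leq[\chi_E]_\mm\,\Pr\big(|v|\big)$ (note the small slip in your displayed estimate, which should read $\Pr\big(|\chi_E\cdot v|\big)=[\chi_E]_\mm\,\Pr\big(|v|\big)$ rather than $\Pr(\chi_E)\,|T(v)|$) combined with the decomposition $T(v)=T([\chi_E]_\Cap\, v)+T([\chi_{E^c}]_\Cap\, v)$, and finally density of simple functions plus continuity. Your elaboration of the ``in particular'' clause by a double application of the universal property is the standard argument that the paper leaves as an obvious consequence, and your identification of the passage from $\R$-linearity to $L^0(\mm)$-linearity as the only delicate point is exactly where the paper's proof concentrates its effort.
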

\begin{proof} The latter statement is an obvious consequence of the former, so we concentrate on this one. Let $v,w\in \mathscr M$ be such that $v\sim_\mm w$ and notice that
\[
\big|T(v)-T(w)\big|=\big|T(v-w)\big|\stackrel{\eqref{eq:ipT}}\leq{\rm Pr}\big(|v-w|\big)=0
\quad\text{ holds }\mm\text{-a.e.}.
\]
Thus $T$ passes to the quotient and defines a map $T_{\rm Pr}:\mathscr M_\mm\to \mathscr N_\mm$ making the diagram \eqref{eq:diag} commute. It is clear that $T_{\rm Pr}$ is linear and continuous (the latter being a consequence of \eqref{eq:ipT} and the definition), thus to conclude it is sufficient to prove $L^0(\mm)$-linearity. By linearity and continuity this will follow if we show that $T_{\rm Pr}\big([\nchi_E]_\mm[v]_\mm\big)=[\nchi_E]_\mm\,T_{\rm Pr}\big([v]_\mm\big)$ for any Borel set $E\subset \X$; in turn, this will follow if we prove that
\[
T\big([\nchi_E]_\Cap\,v\big)=[\nchi_E]_\mm\,T(v)\quad\text{ for every }v\in\mathscr M
\text{ and }E\subset\X\text{ Borel.}
\] 
To show this, notice that from \eqref{eq:ipT} it follows $\big|T([\nchi_{E^c}]_\Cap\,v)\big|\leq [\nchi_{E^c}]_\mm\,{\rm Pr}\big(|v|\big)$, thus multiplying both sides by $[\nchi_{E}]_\mm$ we obtain
\begin{equation}
\label{eq:car}
[\nchi_{E}]_\mm\,T\big([\nchi_{E^c}]_\Cap\,v\big)=0\quad\text{and symmetrically}\quad [\nchi_{E^c}]_\mm\,T\big([\nchi_{E}]_\Cap\,v\big)=0.
\end{equation}
Therefore
\[
\begin{split}
T\big([\nchi_{E}]_\Cap\,v\big)&\stackrel{\phantom{\eqref{eq:car}}}=\big([\nchi_{E}]_\mm +[\nchi_{E^c}]_\mm\big)\,T\big([\nchi_{E}]_\Cap\,v\big)\\
&\stackrel{\eqref{eq:car}}=[\nchi_{E}]_\mm\,T\big([\nchi_{E}]_\Cap\,v\big)\\
&\stackrel{\eqref{eq:car}}=[\nchi_{E}]_\mm \big(T([\nchi_{E}]_\Cap\,v)+T([\nchi_{E^c}]_\Cap\,v)\big)=[\nchi_{E}]_\mm\,T(v)
\end{split}
\]
and the conclusion follows.
\end{proof}

\begin{remark}\label{rmk:no_dual}{\rm
In analogy with the case of $L^0(\mm)$-modules, one could be tempted
to define the dual of a  $L^0(\Cap)$-module $\mathscr M_\Cap$ as the
space of $L^0(\Cap)$-linear continuous maps $L:\,\mathscr M_\Cap\to L^0(\Cap)$
and to declare that the pointwise norm $|L|$ of any such $L$ is the minimal
element of $L^0(\Cap)$ (where minimality is intended in the $\Cap$-a.e.\ sense)
such that the inequality $|L|\geq L(v)$ holds $\Cap$-a.e.\ for any $v\in\mathscr M_\Cap$
that $\Cap$-a.e.\ satisfies $|v|\leq 1$.

Technically speaking, for $L^0(\mm)$-modules this can be achieved by using
the notion of \emph{essential supremum} of a family of Borel functions.
Nevertheless, it seems that this tool cannot be adapted to the situation in which
we want to consider the capacity instead of the reference measure,
as suggested by Example \ref{ex}. 
\fr}\end{remark}
\begin{definition}\label{def:Hilbert_module}
Let $\mathscr H$ be a $L^0(\Cap)$-module over $(\X,\sfd,\mm)$. Then
we say that $\mathscr H$ is a \emph{Hilbert module} provided
\begin{equation}\label{eq:Hilbert_module}
|v+w|^2+|v-w|^2=2\,|v|^2+2\,|w|^2\quad\text{ holds }\Cap\text{-a.e.\ in }\X
\end{equation}
for every $v,w\in\mathscr H$.
\end{definition}
\medskip

By polarisation, we define a pointwise scalar product
$\la\cdot,\cdot\ra:\,\mathscr H\times\mathscr H\to L^0(\Cap)$ as
\begin{equation}
\la v,w\ra:=\frac{|v+w|^2-|v|^2-|w|^2}{2}\quad\Cap\text{-a.e.\ in }\X.
\end{equation}
Then the operator $\la\cdot,\cdot\ra$ is $L^0(\Cap)$-bilinear and satisfies
\begin{equation}\begin{split}
\big|\la v,w\ra\big|&\leq|v||w|\\
\la v,v\ra&=|v|^2
\end{split}\qquad\Cap\text{-a.e.}\text{ for every }v,w\in\mathscr H.
\end{equation}
\subsection{Tangent \texorpdfstring{$L^0(\Cap)$}{L0(Cap)}-module}
Let $(\X,\sfd,\mm)$ be an ${\sf RCD}(K,\infty)$ space, for some $K\in\R$.
A fundamental class of Sobolev functions on $\X$ is that of \emph{test functions},
denoted by ${\rm Test}(\X)$ (cf.\ \cite{Gigli14}).
We point out that we are in a position to apply Theorem \ref{thm:qcrepresentative} above,
since Lipschitz functions with bounded support are dense in $W^{1,2}(\X)$,
as proven in \cite{AmbrosioGigliSavare11-3}. Moreover, a fact that is fundamental
for our discussion (see \cite{Savare13}) is the following:
\begin{equation}\label{eq:prod_test_Sob}
\la\nabla f,\nabla g\ra\in W^{1,2}(\X)\quad\text{ for every }f,g\in{\rm Test}(\X).
\end{equation}
In particular, by taking $g=f$ in \eqref{eq:prod_test_Sob} we get
$|Df|^2\in W^{1,2}(\X)$ for every $f\in{\rm Test}(\X)$.
\medskip

Let us use the notation $L^0_\mm(T\X)$ to indicate the tangent $L^0(\mm)$-module
over $(\X,\sfd,\mm)$. Recall that ${\rm TestV}(\X)\subseteq L^0_\mm(T\X)$ denotes
the class of \emph{test vector fields} on $\X$, while $H^{1,2}_C(T\X)$ is the closure
of ${\rm TestV}(\X)$ in the Sobolev space $W^{1,2}_C(T\X)$.
We know from \cite[Proposition 2.19]{Gigli17} that for any
$v\in H^{1,2}_C(T\X)\cap L^\infty_\mm(T\X)$ one has that $|v|^2\in W^{1,2}(\X)$ and
\begin{equation}\label{eq:comp_metric}
\d|v|^2(w)=2\,\la\nabla_w v,v\ra\;\;\;\mm\text{-a.e.}
\quad\text{ for every }w\in L^0_\mm(T\X),
\end{equation}
whence in particular $\big|D|v|^2\big|\leq 2\,|\nabla v|_{\sf HS}\,|v|$ holds $\mm$-a.e..
This in turn implies the following:
\begin{lemma}\label{lem:|v|_Sobolev}
Let $(\X,\sfd,\mm)$ be an ${\sf RCD}(K,\infty)$ space, for some $K\in\R$.
Let $v\in H^{1,2}_C(T\X)$ be fixed. Then $|v|\in W^{1,2}(\X)$ and
\begin{equation}\label{eq:ineq_grad_|v|}
\big|D|v|\big|\leq|\nabla v|_{\sf HS}\quad\text{ holds }\mm\text{-a.e.\ in }\X.
\end{equation}
\end{lemma}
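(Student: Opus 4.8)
The plan is to reduce Lemma \ref{lem:|v|_Sobolev} to the already-recorded facts \eqref{eq:comp_metric} and its consequence $\big|D|v|^2\big|\leq 2\,|\nabla v|_{\sf HS}\,|v|$, first under the boundedness assumption $v\in H^{1,2}_C(T\X)\cap L^\infty_\mm(T\X)$ and then for general $v$ by truncation and lower semicontinuity. First I would fix $v\in H^{1,2}_C(T\X)\cap L^\infty_\mm(T\X)$ and set $u:=|v|^2\in W^{1,2}(\X)$, which is bounded and nonnegative. Since $|v|=\sqrt{u}$, I would like to apply the chain rule for the composition of the Lipschitz-on-bounded-sets function $t\mapsto\sqrt t$ with a bounded Sobolev function; but $\sqrt{\cdot}$ is not Lipschitz near $0$, so the clean move is to regularise: consider $\varphi_\eps(t):=\sqrt{t+\eps^2}-\eps$ for $\eps>0$, which is smooth with $0\le\varphi_\eps'\le 1$ and $\varphi_\eps(0)=0$. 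Then $\varphi_\eps(u)\in W^{1,2}(\X)$ with $\big|D\varphi_\eps(u)\big|=\varphi_\eps'(u)\,|Du|=\tfrac{1}{2\sqrt{u+\eps^2}}\,|Du|\le\tfrac{1}{2\sqrt{u+\eps^2}}\cdot 2|\nabla v|_{\sf HS}\,|v|=\tfrac{|v|}{\sqrt{|v|^2+\eps^2}}\,|\nabla v|_{\sf HS}\le|\nabla v|_{\sf HS}$ $\mm$-a.e., using \eqref{eq:comp_metric} and the chain rule for minimal weak upper gradients. Since $\varphi_\eps(u)\to\sqrt u=|v|$ in $L^2(\mm)$ as $\eps\downarrow0$ (dominated convergence, using $u\in L^\infty$) and the gradients are uniformly bounded in $L^2(\mm)$, the lower semicontinuity \eqref{eq:lscw12} of the $W^{1,2}$-norm gives $|v|\in W^{1,2}(\X)$ with $\big|D|v|\big|\le|\nabla v|_{\sf HS}$ $\mm$-a.e., which is \eqref{eq:ineq_grad_|v|} in the bounded case.

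For general $v\in H^{1,2}_C(T\X)$, I would truncate: set $v_n:=\ggamma_n\cdot v$ where $\ggamma_n:=\min\{1,n/|v|\}\wedge 1$ (i.e.\ $v_n$ is $v$ where $|v|\le n$ and the radial truncation at level $n$ otherwise), so that $v_n\in H^{1,2}_C(T\X)\cap L^\infty_\mm(T\X)$ with $|\nabla v_n|_{\sf HS}\le|\nabla v|_{\sf HS}$ $\mm$-a.e.\ — this last inequality is the standard locality/truncation estimate for the covariant derivative, analogous to \eqref{eq:localwug} and \eqref{eq:wuglip}, and I would invoke it from the calculus on $H^{1,2}_C(T\X)$. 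By the bounded case, $|v_n|\in W^{1,2}(\X)$ with $\big|D|v_n|\big|\le|\nabla v_n|_{\sf HS}\le|\nabla v|_{\sf HS}$ $\mm$-a.e., uniformly in $n$. Since $|v_n|=|v|\wedge n\to|v|$ in $L^2(\mm)$ (dominated convergence, as $|v|\in L^2(\mm)$) and the gradients are uniformly bounded in $L^2(\mm)$, another application of \eqref{eq:lscw12} yields $|v|\in W^{1,2}(\X)$ and $\big|D|v|\big|\le|\nabla v|_{\sf HS}$ $\mm$-a.e., completing the proof.

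I expect the main obstacle to be the truncation step for unbounded $v$: one needs that radially truncating a Sobolev vector field produces again an element of $H^{1,2}_C(T\X)$ with a pointwise-controlled covariant derivative. This is morally the vector analogue of the fact that post-composing a Sobolev function with a $1$-Lipschitz function does not increase the weak upper gradient, but it relies on the product and chain rules for the covariant derivative on $\sf RCD$ spaces (that $\ggamma_n$ is a bounded Sobolev function, that $\nabla(\ggamma_n v)=\d\ggamma_n\otimes v+\ggamma_n\nabla v$, and a cancellation between the two terms where the truncation is active). If one wants to stay entirely within the facts quoted in the excerpt, a cleaner alternative is to avoid truncation altogether by noting that $v_n:=n\,v/(|v|\vee n)$ still satisfies the needed estimate, or simply to quote the relevant truncation lemma from \cite{Gigli17}; either way the heart of the argument is the smoothed chain rule above combined with lower semicontinuity, and the unbounded case is a soft limiting argument on top of it.
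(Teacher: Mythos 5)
Your first step (the bounded case via the smoothed square root) is correct and is essentially the paper's argument: the paper runs the same computation with $\varphi_\eps(t)=\sqrt{t+\eps}$, the chain rule, the bound $\big|D|v|^2\big|\leq 2|\nabla v|_{\sf HS}|v|$ from \eqref{eq:comp_metric}, weak $L^2$-compactness of the gradients and lower semicontinuity of the minimal weak upper gradient. (Your side remark that $0\le\varphi_\eps'\le 1$ is false for small $\eps$, but it is never used; the chain of inequalities you actually write is fine, and your choice $\varphi_\eps(t)=\sqrt{t+\eps^2}-\eps$ correctly handles the possibly infinite measure.)

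The reduction from general $v\in H^{1,2}_C(T\X)$ to the bounded case is where there is a genuine gap. Your truncation $v_n:=\gamma_n v$ with $\gamma_n=\min\{1,n/|v|\}$ requires, before anything else, that $\gamma_n$ be a bounded Sobolev function so that the Leibniz rule yields $\gamma_n v\in H^{1,2}_C(T\X)$ with $\nabla(\gamma_n v)=\d\gamma_n\otimes v+\gamma_n\nabla v$; but Sobolev regularity of $\gamma_n$ amounts to Sobolev regularity of $|v|$ (or of $1/|v|$ on $\{|v|>n\}$), which is precisely what the lemma is trying to establish --- the argument is circular. Moreover the pointwise bound $|\nabla(\gamma_n v)|_{\sf HS}\le|\nabla v|_{\sf HS}$, while true in the smooth setting via the projection identity $\d|v|(w)=\la\nabla_w v,v\ra/|v|$, is not among the facts quoted in the paper and again presupposes control of $\d|v|$; there is no off-the-shelf truncation lemma of this form to quote. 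The fix is simpler than a truncation lemma and is what the paper does: $H^{1,2}_C(T\X)$ is by definition the $W^{1,2}_C$-closure of ${\rm TestV}(\X)$, and test vector fields are bounded, so your bounded case already covers them; taking $(v_n)_n\subseteq{\rm TestV}(\X)$ with $v_n\to v$ in $W^{1,2}_C(T\X)$ one has $|v_n|\to|v|$ and $|\nabla v_n|_{\sf HS}\to|\nabla v|_{\sf HS}$ in $L^2(\mm)$, and a weak limit of (a subsequence of) $\big|D|v_n|\big|$ together with the lower semicontinuity \eqref{eq:lscw12} gives $|v|\in W^{1,2}(\X)$ and \eqref{eq:ineq_grad_|v|}. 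With that replacement your proof is complete.
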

\begin{proof}
First of all, we prove the statement for $v\in{\rm TestV}(\X)$. Given any $\eps>0$,
let us define the Lipschitz function $\varphi_\eps:\,[0,+\infty)\to\R$ as
$\varphi_\eps(t):=\sqrt{t+\eps}$ for any $t\geq 0$. Hence by applying the chain rule
for minimal weak upper gradients we see that $\varphi_\eps\circ|v|^2\in{\rm S}^2(\X)$
(cf.\ \cite{AmbrosioGigliSavare11} for the notion of Sobolev class ${\rm S}^2(\X)$) and
\[\big|D(\varphi_\eps\circ|v|^2)\big|=\varphi'_\eps\circ|v|^2\,\big|D|v|^2\big|
=\frac{\big|D|v|^2\big|}{2\,\sqrt{|v|^2+\eps}}\leq\frac{|v|}{\sqrt{|v|^2+\eps}}
\,|\nabla v|_{\sf HS}\leq|\nabla v|_{\sf HS}.\]
This grants the existence of $G\in L^2(\mm)$ and a sequence $\eps_j\searrow 0$
such that $\big|D(\varphi_{\eps_j}\circ|v|^2)\big|\weakto G$
weakly in $L^2(\mm)$ as $j\to\infty$ and $G\leq|\nabla v|_{\sf HS}$ in the
$\mm$-a.e.\ sense. Since $\varphi_{\eps_j}\circ|v|^2\to|v|$ pointwise $\mm$-a.e.\ as
$j\to\infty$, we deduce from the lower semicontinuity of minimal weak upper gradients
that $|v|\in W^{1,2}(\X)$ and that $\big|D|v|\big|\leq|\nabla v|_{\sf HS}$
holds $\mm$-a.e.\ in $\X$.

Now fix $v\in H^{1,2}_C(T\X)$. Pick a sequence
$(v_n)_n\subseteq{\rm TestV}(\X)$ that $W^{1,2}_C(T\X)$-converges to $v$.
In particular, $|v_n|\to|v|$ and $|\nabla v_n|_{\sf HS}\to|\nabla v|_{\sf HS}$ in
$L^2(\mm)$. By the first part of the proof we know that $|v_n|\in W^{1,2}(\X)$
and $\big|D|v_n|\big|\leq|\nabla v_n|_{\sf HS}$ for all $n\in\N$, thus accordingly
(up to a not relabeled subsequence) we have that $\big|D|v_n|\big|\weakto H$ weakly
in $L^2(\mm)$, for some $H\in L^2(\mm)$ such that $H\leq|\nabla v|_{\sf HS}$ holds
$\mm$-a.e.\ in $\X$. Again by lower semicontinuity of minimal weak upper gradients
we conclude that $|v|\in W^{1,2}(\X)$ with $\big|D|v|\big|\leq|\nabla v|_{\sf HS}$
in the $\mm$-a.e.\ sense, proving the statement.
\end{proof}
\medskip

We now introduce the so-called \emph{tangent $L^0(\Cap)$-module} $L^0_\Cap(T\X)$ over $\X$,
which is a  $L^0(\Cap)$-module in the sense of Definition \ref{def:n_Cap-m}.
\begin{theorem}[Tangent $L^0(\Cap)$-module]\label{thm:tg_mod}
Let $(\X,\sfd,\mm)$ be an ${\sf RCD}(K,\infty)$ space.
Then there exists a unique couple $\big(L^0_\Cap(T\X),\bar\nabla\big)$,
where $L^0_\Cap(T\X)$ is a  $L^0(\Cap)$-module over $\X$ and the operator
$\bar\nabla:\,{\rm Test}(\X)\to L^0_\Cap(T\X)$ is linear, such that
the following properties hold:
\begin{itemize}
\item[$\rm i)$] For any $f\in{\rm Test}(\X)$ we have that the equality
$|\bar\nabla f|=\qcr\big(|Df|\big)$ holds $\Cap$-a.e.\ on $\X$
(note that $|Df|\in W^{1,2}(\X)$ as a consequence of Lemma \ref{lem:|v|_Sobolev}).
\item[$\rm ii)$] The space of $\sum_{n\in\N}\nchi_{E_n}\bar\nabla f_n$, with
$(f_n)_n\subseteq{\rm Test}(\X)$ and $(E_n)_n$ Borel partition of $\X$,
is dense in $L^0_\Cap(T\X)$.
\end{itemize}
Uniqueness is intended up to unique isomorphism: given another couple
$(\mathscr M_\Cap,\bar\nabla')$ with the same properties, there exists a unique
isomorphism $\Phi:\,L^0_\Cap(T\X)\to\mathscr M_\Cap$ with $\Phi\circ\bar\nabla=\bar\nabla'$.

The space $L^0_\Cap(T\X)$ is called \emph{tangent $L^0(\Cap)$-module} associated
to $(\X,\sfd,\mm)$, while its elements are said to be \emph{$\Cap$-vector fields} on $\X$.
Moreover, the operator $\bar\nabla$ is called \emph{gradient}.
\end{theorem}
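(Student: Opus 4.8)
The plan is to run the standard ``generators and relations'' construction of tensor modules, as developed for $L^0(\mm)$-modules in the cited work, now over $L^0(\Cap)$: build an explicit pre-module on the formal symbols $\bar\nabla f$, $f\in\test{\X}$, equip it with a pointwise norm read off from the quasi-continuous representatives of the Sobolev functions $\langle\nabla f,\nabla g\rangle$, quotient out the kernel of that norm, complete with respect to the distance $\sfd_{\mathscr M}$, and finally extend the algebraic structure to the completion. The new inputs are \eqref{eq:prod_test_Sob} (so that $\langle\nabla f,\nabla g\rangle\in W^{1,2}(\X)$ and $\qcr(\langle\nabla f,\nabla g\rangle)$ makes sense) and the properties of $\qcr$ from Section \ref{s:preliminaries}.

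\emph{Uniqueness.} Suppose $(\mathscr M_1,\bar\nabla_1)$ and $(\mathscr M_2,\bar\nabla_2)$ both satisfy i)--ii). First I would show that in any such module the pointwise norm of a finite combination is \emph{forced}:
\[
\Big|\sum_j g_j\,\bar\nabla f_j\Big|^2=\sum_{i,j}g_i\,g_j\,\qcr\big(\langle\nabla f_i,\nabla f_j\rangle\big)\quad\Cap\text{-a.e.}
\]
For simple coefficients $g_j$ this follows by refining their level sets into a common finite Borel partition $\{F_l\}$, writing $\sum_j g_j\bar\nabla f_j=\sum_l\nchi_{F_l}\bar\nabla h_l$ with $h_l\in\test{\X}$ the real linear combination of the $f_j$'s given by the constant values of the $g_j$'s on $F_l$, and then using the locality of the pointwise norm in a $L^0(\Cap)$-module (i.e.\ $|\nchi_E v|=\nchi_E|v|$ and $|\nchi_E v+\nchi_F w|=\nchi_E|v|+\nchi_F|w|$ for disjoint $E,F$) together with property i) and the linearity of $\qcr$; for general $g_j\in L^0(\Cap)$ it then follows by approximation with simple functions and continuity of multiplication and of $|\cdot|$. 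Since the right-hand side does not refer to the ambient module, the assignment $\Phi:\sum_j g_j\bar\nabla_1 f_j\mapsto\sum_j g_j\bar\nabla_2 f_j$ is well defined, $L^0(\Cap)$-linear and pointwise-norm preserving, hence $\sfd_{\mathscr M}$-isometric, between the two generating pre-modules; these are dense by ii) and both modules are complete, so $\Phi$ extends to an isometric $L^0(\Cap)$-linear bijection with $\Phi\circ\bar\nabla_1=\bar\nabla_2$, and it is the unique such map because it is determined on a dense set of generators.

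\emph{Existence.} Let $\mathcal W$ be the $L^0(\Cap)$-pre-module of formal finite sums $\omega=\sum_j g_j\bar\nabla f_j$, $g_j\in L^0(\Cap)$, $f_j\in\test{\X}$, with $\bar\nabla$ declared $\R$-linear in its argument, and define
\[
|\omega|:=\Big(\sum_{i,j}g_i\,g_j\,\qcr\big(\langle\nabla f_i,\nabla f_j\rangle\big)\Big)^{1/2}\in L^0(\Cap).
\]
Well-posedness (independence of the representation of $\omega$) is immediate from $\R$-bilinearity of $(f,g)\mapsto\langle\nabla f,\nabla g\rangle$ and $\R$-linearity of $\qcr$. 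The crucial point is that the symmetric matrix $\big(\qcr(\langle\nabla f_i,\nabla f_j\rangle)(x)\big)_{ij}$ is positive semidefinite for $\Cap$-a.e.\ $x$: for each $\xi\in\Q^m$ one has $\sum_{i,j}\xi_i\xi_j\langle\nabla f_i,\nabla f_j\rangle=|D(\sum_i\xi_i f_i)|^2\geq 0$ $\mm$-a.e.\ with $\sum_i\xi_i f_i\in\test{\X}$, and $\qcr$ preserves non-negativity $\Cap$-a.e.\ (since $\qcr(u)=\qcr(u\vee 0)=\qcr(u)\vee 0$ $\Cap$-a.e.\ by \eqref{eq:normqcr} and linearity), so intersecting over the countably many rational $\xi$ gives the claim. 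From this, $|\omega|\geq 0$ $\Cap$-a.e., the triangle inequality (pointwise Cauchy--Schwarz for the semidefinite form) and homogeneity $|h\omega|=|h||\omega|$ all follow $\Cap$-a.e. Setting $\mathcal N:=\{\omega\in\mathcal W:|\omega|=0\ \Cap\text{-a.e.}\}$, a submodule, the pointwise norm descends to $\mathcal W_0:=\mathcal W/\mathcal N$, and $L^0_\Cap(T\X)$ is defined as the $\sfd_{\mathscr M}$-completion of $\mathcal W_0$; the module axioms of Definition \ref{def:n_Cap-m} (joint continuity of scalar multiplication, associativity, etc.) are verified by extending from the dense pre-module exactly as in the $L^0(\mm)$-theory, the completeness being granted by the argument of Theorem \ref{thm:L0_complete}. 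Property i) holds because $|\bar\nabla f|^2=\qcr(|Df|^2)=\qcr(|Df|)^2$ $\Cap$-a.e.\ — both sides are quasi-continuous and $\mm$-a.e.\ equal, hence $\Cap$-a.e.\ equal by Proposition \ref{prop:eqmmimplieseqCap}, using $|Df|,|Df|^2\in W^{1,2}(\X)$ (Lemma \ref{lem:|v|_Sobolev} and \eqref{eq:prod_test_Sob}). For ii), finite sums with \emph{simple} coefficients are $\sfd_{\mathscr M}$-dense in $\mathcal W_0$ by Proposition \ref{prop:Sf_dense} and continuity of multiplication (Proposition \ref{prop:Cauchy_indep_choice}), and each such sum equals $\sum_l\nchi_{F_l}\bar\nabla h_l$ with $h_l\in\test{\X}$ after refining level sets, which is of the required form.

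\emph{Main obstacle.} The genuinely delicate point is the $\Cap$-level control of the candidate pointwise norm — the $\Cap$-a.e.\ positive semidefiniteness of the Gram matrix of the $\qcr(\langle\nabla f_i,\nabla f_j\rangle)$ and, consequently, the $\Cap$-a.e.\ validity of all module axioms — which rests on the linearity and lattice-compatibility of $\qcr$, the injectivity of $\Pr$ on $\Cqc$, and, crucially, on $\langle\nabla f,\nabla g\rangle\in W^{1,2}(\X)$ for test functions, i.e.\ on the ${\sf RCD}$ hypothesis. The remaining work — extending the module structure to the completion and checking the axioms survive — is routine in spirit but must be handled with care, since $\Cap$ is only an outer measure and there is no dominated convergence at our disposal.
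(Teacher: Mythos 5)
Your construction is a legitimate variant of the paper's as far as it goes: the paper also runs a ``generators, pointwise norm, quotient, completion'' scheme, but its pre-module consists of the countable families $(E_n,f_n)_n$ themselves, with pointwise norm $\sum_n\nchi_{E_n}\qcr\big(|Df_n|\big)$ (so it only needs $|Df|\in W^{1,2}(\X)$, not the full Gram matrix built from \eqref{eq:prod_test_Sob}), and your checks of property i), of the $\Cap$-a.e.\ positive semidefiniteness of the Gram matrix via rational directions, and of monotonicity of $\qcr$ are fine. The genuine gap is in how you treat the countable partitions, i.e.\ exactly at the point where the absence of dominated convergence for $\Cap$ bites. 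Property ii) quantifies over \emph{all} Borel partitions $(E_n)_n$ and sequences $(f_n)_n\subset\test{\X}$; in your module such an object can only be an infinite series, and its partial sums need not be $\sfd_{\mathscr M}$-Cauchy, because $\Cap$ is not continuous along decreasing sequences. Concretely, on $\X=[0,1]$ take $f\in\test{\X}$ with $\qcr(|Df|)=1$ on $[1/4,3/4]$, let $(q_n)_n$ enumerate the rationals of $[1/4,3/4]$, set $E_n:=\{q_n\}$, $f_n:=nf$, and let $E_0$ be the complement with $f_0=0$. Then for $N<M$ the difference of partial sums has pointwise norm $\geq 1$ on $R_{N,M}:=\{q_n:\,N<n\leq M\}$, and by continuity of $\Cap$ along increasing sequences $\sup_M\Cap(R_{N,M})=\Cap\big(\{q_n:\,n>N\}\big)\geq 1/2$, since any Sobolev function that is $\geq1$ $\mm$-a.e.\ on an open neighbourhood of a dense subset of $[1/4,3/4]$ is (through its continuous representative) $\geq 1$ on all of $[1/4,3/4]$. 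So the series does not converge, and your verification of ii) only covers finite sums with (countably-valued) simple coefficients; the general elements appearing in ii) are not shown to exist in your completion, let alone to be dense.

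The same issue undermines your uniqueness argument: you assert that the pre-modules of \emph{finite} sums $\sum_j g_j\bar\nabla f_j$ ``are dense by ii)'', but ii) gives density of the glued countable sums, and passing from those to finite sums by truncation is precisely the convergence that fails above. Hence your $\Phi$ is only defined and isometric on the closure of the finite sums, which you have not shown to be the whole module, so surjectivity (and indeed totality on a dense set) is not established. Note that in the $[0,1]$ example the glued element happens to lie in the closure of finite sums anyway (it equals $g\,\bar\nabla f$ for a single Borel coefficient $g$), so the objection is to the argument rather than a disproof of your construction; but in general no such rewriting is available, and whether the two completions coincide would need a separate proof. The paper avoids both problems by fiat: the countable families are elements of the pre-module from the start, the pointwise norm is prescribed directly on them, and the uniqueness isomorphism is defined on these glued elements and extended by the density that ii) actually provides. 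If you want to keep your Gram-matrix construction, you must either enlarge your pre-module in the same way or prove separately that every glued family is an $\sfd_{\mathscr M}$-limit of finite sums.
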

\begin{proof}\ \\
{\color{blue}\textsc{Uniqueness.}} Consider any simple vector field
$v\in L^0_\Cap(T\X)$, i.e.\ $v=\sum_{n\in\N}\nchi_{E_n}\bar\nabla f_n$
for some $(f_n)_n\subseteq{\rm Test}(\X)$ and $(E_n)_n$ Borel partition of $\X$.
We are thus forced to set
\begin{equation}\label{eq:def_isom_cotg_mod}
\Phi(v):=\sum_{n\in\N}\nchi_{E_n}\bar\nabla'f_n\in\mathscr M_\Cap.
\end{equation}
Such definition is well-posed, as granted by the $\Cap$-a.e.\ equalities
\[\left|\sum_{n\in\N}\nchi_{E_n}\bar\nabla'f_n\right|
=\sum_{n\in\N}\nchi_{E_n}|\bar\nabla'f_n|
=\sum_{n\in\N}\nchi_{E_n}|Df_n|
=\sum_{n\in\N}\nchi_{E_n}|\bar\nabla f_n|=|v|,\]
which also show that $\Phi$ preserves the pointwise norm of simple vector fields.
In particular, the map $\Phi$ is linear and continuous, whence it can be
uniquely extended to a linear and continuous operator
$\Phi:\,L^0_\Cap(T\X)\to\mathscr M_\Cap$ by density of simple vector fields
in $L^0_\Cap(T\X)$. It follows from Proposition \ref{prop:Sf_dense}
that $\Phi$ preserves the pointwise norm. Moreover, we know from the definition
\eqref{eq:def_isom_cotg_mod} that $\Phi(fv)=f\,\Phi(v)$ is
satisfied for any simple $f$ and $v$, whence also for all $f\in L^0(\Cap)$
and $v\in L^0_\Cap(T\X)$ by Proposition \ref{prop:Sf_dense}.
To conclude, just notice that the image of $\Phi$ is dense in $\mathscr M_\Cap$
by density of simple vector fields in $\mathscr M_\Cap$, thus accordingly
$\Phi$ is surjective (as its image is closed, being $\Phi$ an isometry).
Therefore we proved that there exists a unique module isomorphism
$\Phi:\,L^0_\Cap(T\X)\to\mathscr M_\Cap$ such that $\Phi\circ\bar\nabla=\bar\nabla'$,
as required.\\
{\color{blue}\textsc{Existence.}} We define the `pre-tangent module' $\sf Ptm$
as the set of all sequences $(E_n,f_n)_n$, where $(f_n)_n\subseteq{\rm Test}(\X)$
and $(E_n)_n$ is a Borel partition of $\X$. We now define an equivalence relation
$\sim$ on $\sf Ptm$: we declare that $(E_n,f_n)_n\sim(F_m,g_m)_m$ provided
\[\qcr\big(|D(f_n-g_m)|\big)=0\;\;\;\Cap\text{-a.e.\ on }E_n\cap F_m
\quad\text{ for every }n,m\in\N.\]
The equivalence class of $(E_n,f_n)_n$ will be denoted by $[E_n,f_n]_n$.
Moreover, let us define
\[\alpha\,[E_n,f_n]_n+\beta\,[F_m,g_m]_m:=[E_n\cap F_m,\alpha\,f_n+\beta\,g_m]_{n,m}\]
for every $\alpha,\beta\in\R$ and $[E_n,f_n]_n,[F_m,g_m]_m\in{\sf Ptm}/\sim$,
so that ${\sf Ptm}/\sim$ inherits a vector space structure; well-posedness of
these operations is granted by the locality property of minimal weak upper
gradients and by Theorem \ref{thm:qcrepresentative}.
We define the pointwise norm of any given element $[E_n,f_n]_n\in{\sf Ptm}/\sim$ as
\begin{equation}\label{eq:def_ptwse_norm}
\big|[E_n,f_n]_n\big|:=\sum_{n\in\N}\nchi_{E_n}\qcr\big(|Df_n|\big)\in L^0(\Cap).
\end{equation}
Then we define $L^0_\Cap(T\X)$ as the completion of the metric space
$\big({\sf Ptm}/\sim\,,\sfd_{L^0_\Cap(T\X)}\big)$, where
\begin{equation}\label{eq:def_distance_cotg_mod}
\sfd_{L^0_\Cap(T\X)}(v,w):=
\sum_{k\in\N}\frac{1}{2^k\big(\Cap(A_k)\vee 1\big)}\int_{A_k}|v-w|\wedge 1\,\d\Cap
\quad\text{ for all }v,w\in{\sf Ptm}/\sim,
\end{equation}
while we set $\bar\nabla f:=[\X,f]\in L^0_\Cap(T\X)$ for every test function
$f\in{\rm Test}(\X)$, thus obtaining a linear operator
$\bar\nabla:\,{\rm Test}(\X)\to L^0_\Cap(T\X)$. Item i) of the statement is thus
clearly satisfied. Observe that $[E_n,f_n]_n=\sum_{n\in\N}\nchi_{E_n}\bar\nabla f_n$
for every $[E_n,f_n]_n\in{\sf Ptm}/\sim$, so that also item ii) is verified,
as a consequence of the density of ${\sf Ptm}/\sim$ in $L^0_\Cap(T\X)$.
Now let us define the multiplication operator
$\,\cdot:\,{\sf Sf}(\X)\times({\sf Ptm}/\sim)\to{\sf Ptm}/\sim$ as follows:
\begin{equation}\label{eq:def_mult}
\bigg(\sum_{m\in\N}\alpha_m\,\nchi_{F_m}\bigg)\cdot[E_n,f_n]_n:=
[E_n\cap F_m,\alpha_m\,f_n]_{n,m}\in{\sf Ptm}/\sim.
\end{equation}
Therefore the maps defined in \eqref{eq:def_ptwse_norm} and
\eqref{eq:def_mult} can be uniquely extended by continuity to
a pointwise norm operator $|\cdot|:\,L^0_\Cap(T\X)\to L^0(\Cap)$ and
a multiplication by $L^0(\Cap)$-functions
$\,\cdot:\,L^0(\Cap)\times L^0_\Cap(T\X)\to L^0_\Cap(T\X)$,
respectively. It also turns out that the distance $\sfd_{L^0_\Cap(T\X)}$ is
expressed by the formula in \eqref{eq:def_distance_cotg_mod} for any
$v,w\in L^0_\Cap(T\X)$, as one can readily deduce from Proposition
\ref{prop:Sf_dense}. Finally, standard verifications show that
$L^0_\Cap(T\X)$ is a $L^0(\Cap)$-module over $(\X,\sfd,\mm)$,
thus concluding the proof.
\end{proof}
\begin{remark}{\rm
An analogous construction has been carried out in \cite{Gigli14} to define
the cotangent $L^0(\mm)$-module $L^0_\mm(T^*\X)$, while the tangent $L^0(\mm)$-module
$L^0_\mm(T\X)$ was obtained from the cotangent one by duality.
However, since we cannot consider duals of $L^0(\Cap)$-modules
(as pointed out in Remark \ref{rmk:no_dual}), we opted for a different
axiomatisation. We just underline the fact that, since $\sf RCD$ spaces are
infinitesimally Hilbertian, the modules $L^0_\mm(T^*\X)$ and $L^0_\mm(T\X)$
can be canonically identified via the Riesz isomorphism.
\fr}\end{remark}
\begin{proposition}
The tangent $L^0(\Cap)$-module $L^0_\Cap(T\X)$ is a Hilbert module.
\end{proposition}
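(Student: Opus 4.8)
The plan is to check the parallelogram law \eqref{eq:Hilbert_module} first on $\bar\nabla f,\bar\nabla g$ for test functions $f,g$, then on simple $\Cap$-vector fields, and finally on all of $L^0_\Cap(T\X)$ by approximation.

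\textbf{Gradients of test functions.} Since $\test{\X}$ is an algebra, $f\pm g\in\test{\X}$, so $|D(f\pm g)|\in W^{1,2}(\X)$ (Theorem \ref{thm:tg_mod}\,i)) and, by linearity of $\bar\nabla$ and the same item, $|\bar\nabla f\pm\bar\nabla g|=|\bar\nabla(f\pm g)|=\qcr\big(|D(f\pm g)|\big)$ $\Cap$-a.e. Consider the element $\Xi:=\qcr(|D(f+g)|)^2+\qcr(|D(f-g)|)^2-2\,\qcr(|Df|)^2-2\,\qcr(|Dg|)^2$, which lies in $\Cqc$ because $\Cqc$ is an algebra. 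Applying the (algebra-homomorphism) projection $\Pr$ and using that $\Pr\circ\qcr$ is the inclusion $W^{1,2}(\X)\hookrightarrow L^0(\mm)$, one obtains $\Pr(\Xi)=\big[|D(f+g)|^2+|D(f-g)|^2-2|Df|^2-2|Dg|^2\big]_\mm$, which vanishes since $|Df|=|\nabla f|$ $\mm$-a.e.\ and $L^0_\mm(T\X)$ is a Hilbert module (infinitesimal Hilbertianity of $\sf RCD$ spaces). As $\Pr\restr{\Cqc}$ is injective (Proposition \ref{prop:eqmmimplieseqCap}), we get $\Xi=0$ $\Cap$-a.e., i.e.\ \eqref{eq:Hilbert_module} holds for $v=\bar\nabla f$, $w=\bar\nabla g$.

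\textbf{Simple vector fields.} Given $v,w\in{\sf Ptm}/\sim$, after passing to a common refinement of their partitions we may write $v=\sum_n\nchi_{E_n}\bar\nabla f_n$ and $w=\sum_n\nchi_{E_n}\bar\nabla g_n$ over a single Borel partition $(E_n)_n$ (well-posedness being the very content of the equivalence relation defining $\sf Ptm$). Then $v\pm w=\sum_n\nchi_{E_n}\bar\nabla(f_n\pm g_n)$ and, by the pointwise-norm axioms together with disjointness of the $E_n$'s, $|v\pm w|^2=\sum_n\nchi_{E_n}|\bar\nabla(f_n\pm g_n)|^2$. Adding the two identities and applying the previous step to each summand gives $|v+w|^2+|v-w|^2=\sum_n\nchi_{E_n}\big(2|\bar\nabla f_n|^2+2|\bar\nabla g_n|^2\big)=2|v|^2+2|w|^2$ $\Cap$-a.e.

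\textbf{General case and the main obstacle.} For arbitrary $v,w\in L^0_\Cap(T\X)$ choose $v_j,w_j\in{\sf Ptm}/\sim$ with $v_j\to v$, $w_j\to w$ (Theorem \ref{thm:tg_mod}\,ii)). The pointwise norm is $1$-Lipschitz into $L^0(\Cap)$, since $\big||u|-|u'|\big|\le|u-u'|$ $\Cap$-a.e., hence $|v_j\pm w_j|\to|v\pm w|$, $|v_j|\to|v|$ and $|w_j|\to|w|$ in $L^0(\Cap)$. The delicate point is that $u\mapsto|u|^2$ need not be $L^0(\Cap)$-continuous, because $\Cap$-a.e.\ convergence does not imply $L^0(\Cap)$-convergence (Remark \ref{rmk:d_conv_implies_ae-notinverse}); to circumvent this, apply Proposition \ref{prop:Sf_dense} four times in succession to extract a single subsequence along which all four of the above sequences converge $\Cap$-a.e. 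Discarding a $\Cap$-null set — the countable union of these four exceptional sets and of the exceptional sets of the identities from the previous step — we may take the pointwise limit in the parallelogram identity for $v_j,w_j$ along this subsequence and conclude \eqref{eq:Hilbert_module} for $v,w$. This last passage to the limit is the crux of the argument; the remaining verifications are routine.
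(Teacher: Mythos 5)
Your proof is correct and follows essentially the same route as the paper: establish the pointwise parallelogram identity \eqref{eq:Hilbert_module} for gradients of test functions (your argument via the injectivity of $\Pr\restr\Cqc$ from Proposition \ref{prop:eqmmimplieseqCap} is just an explicit rendering of the paper's identity $\qcr\big(|D(f+g)|^2+|D(f-g)|^2\big)=\qcr\big(2|Df|^2+2|Dg|^2\big)$), then pass to simple vector fields over a common partition, then to all of $L^0_\Cap(T\X)$ by density. The only difference is that you carry out the final limit carefully -- extracting a single subsequence along which $|v_j\pm w_j|$, $|v_j|$, $|w_j|$ converge $\Cap$-a.e.\ via Proposition \ref{prop:Sf_dense} and passing to the pointwise limit outside a countable union of $\Cap$-null sets -- whereas the paper compresses this into the words ``by approximation''; your treatment correctly addresses the fact that squaring is not obviously continuous on $L^0(\Cap)$.
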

\begin{proof}
Given any $f,g\in{\rm Test}(\X)$, we deduce from item i) of Theorem \ref{thm:tg_mod}
and the last statement of Theorem \ref{thm:qcrepresentative} that
\[\begin{split}
|\bar\nabla f+\bar\nabla g|^2+|\bar\nabla f-\bar\nabla g|^2
&=\qcr\big(|D(f+g)|^2+|D(f-g)|^2\big)=\qcr\big(2\,|Df|^2+2\,|Dg|^2\big)\\
&=2\,|\bar\nabla f|^2+2\,|\bar\nabla g|^2.
\end{split}\]
This grants that the pointwise parallelogram identity \eqref{eq:Hilbert_module}
is satisfied whenever $v,w$ are $L^0(\Cap)$-linear combinations of elements
of $\big\{\bar\nabla f\,:\,f\in{\rm Test}(\X)\big\}$, whence also for any
$v,w\in L^0_\Cap(T\X)$ by approximation. This proves that $L^0_\Cap(T\X)$
is a Hilbert module, as required.
\end{proof}
\medskip

We now investigate  the relation that subsists between tangent $L^0(\Cap)$-module
and tangent $L^0(\mm)$-module. We start with the following result, which shows the existence of a natural projection operator $\bar\Pr$ sending $\bar\nabla f$ to $\nabla f$:
\begin{proposition}\label{pr:prtan}
There exists a unique linear continuous operator $\bar\Pr:\,L^0_\Cap(T\X)\to L^0_\mm(T\X)$
that satisfies the following properties:
\begin{itemize}
\item[$\rm i)$] $\bar\Pr(\bar\nabla f)=\nabla f$ for every $f\in{\rm Test}(\X)$.
\item[$\rm ii)$] $\bar\Pr(gv)=\Pr(g)\,\bar\Pr(v)$ for every $g\in L^0(\Cap)$
and $v\in L^0_\Cap(T\X)$.
\end{itemize}
Moreover, the operator $\bar\Pr$ satisfies
\begin{equation}\label{eq:preserves_bar_Pi}
\big|\bar\Pr(v)\big|=\Pr\big(|v|\big)\;\;\;\mm\text{-a.e.}
\quad\text{ for every }v\in L^0_\Cap(T\X).
\end{equation}
\end{proposition}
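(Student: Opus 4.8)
The plan is to construct $\bar\Pr$ by defining it first on the dense subspace ${\sf Ptm}/\sim$ of $L^0_\Cap(T\X)$ appearing in the existence part of Theorem~\ref{thm:tg_mod}, and then extending by continuity. On a generic element $[E_n,f_n]_n=\sum_{n\in\N}\nchi_{E_n}\bar\nabla f_n$ I would simply set $T\big([E_n,f_n]_n\big):=\sum_{n\in\N}\nchi_{E_n}\nabla f_n\in L^0_\mm(T\X)$, which is forced by requirements i) and ii). The first thing to check is that this is well posed: if $[E_n,f_n]_n=[F_m,g_m]_m$, then by definition of $\sim$ we have $\qcr\big(|D(f_n-g_m)|\big)=0$ $\Cap$-a.e.\ on $E_n\cap F_m$, and applying $\Pr$ — recalling that $\Pr\circ\qcr$ is the inclusion $W^{1,2}(\X)\hookrightarrow L^0(\mm)$ (Theorem~\ref{thm:qcrepresentative}) and that $\Cap$-null sets are $\mm$-null — yields $|D(f_n-g_m)|=0$ $\mm$-a.e.\ on $E_n\cap F_m$, which by locality of $|D\cdot|$ and the identity $|\nabla h|=|Dh|$ is exactly the relation encoding $\sum_n\nchi_{E_n}\nabla f_n=\sum_m\nchi_{F_m}\nabla g_m$ in $L^0_\mm(T\X)$. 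Hence $T$ is a well-defined linear map on ${\sf Ptm}/\sim$.

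Next I would observe, directly from \eqref{eq:def_ptwse_norm}, that $T$ intertwines the pointwise norms exactly: $\big|T([E_n,f_n]_n)\big|=\sum_n\nchi_{E_n}|Df_n|=\Pr\big(\big|[E_n,f_n]_n\big|\big)$ $\mm$-a.e. Using the elementary inequality $\int h\,\d\mm\le\int h\,\d\Cap$ for Borel $h\colon\X\to[0,+\infty]$ — immediate from $\mm(E)\le\Cap(E)$ (Proposition~\ref{prop:properties_cap}) and Cavalieri's formula — this gives, for each $k$, $\int_{A_k}|Tv-Tw|\wedge 1\,\d\mm\le\int_{A_k}|v-w|\wedge 1\,\d\Cap$; since $\sfd_\Cap$-convergence and the $\sfd_\Cap$-Cauchy property are both characterized through the integrals $\int_{A_k}\cdot\wedge 1\,\d\Cap$, the same per-$A_k$ bound shows $T$ is continuous and maps Cauchy sequences to Cauchy sequences. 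Hence, by density of ${\sf Ptm}/\sim$ in $L^0_\Cap(T\X)$ and completeness of $L^0_\mm(T\X)$, $T$ extends uniquely to a continuous linear $\bar\Pr\colon L^0_\Cap(T\X)\to L^0_\mm(T\X)$. Property i) holds since $\bar\nabla f=[\X,f]$, and the identity $\big|\bar\Pr(v)\big|=\Pr(|v|)$ $\mm$-a.e.\ — i.e.\ \eqref{eq:preserves_bar_Pi} — passes from ${\sf Ptm}/\sim$ to all of $L^0_\Cap(T\X)$ because $|\cdot|$, $\Pr$ and $\bar\Pr$ are continuous (continuity of $\Pr$ being again the Cavalieri estimate).

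For property ii) I would first verify $T(gv)=\Pr(g)\,T(v)$ when $g=\sum_m\alpha_m\nchi_{F_m}\in{\sf Sf}(\X)$ and $v=[E_n,f_n]_n$, which is a one-line computation from \eqref{eq:def_mult}, and then pass to arbitrary $g\in L^0(\Cap)$ and $v\in L^0_\Cap(T\X)$ using density of simple functions (Proposition~\ref{prop:Sf_dense}), density of ${\sf Ptm}/\sim$, and joint continuity of the two multiplications. Uniqueness is then automatic: any operator satisfying i) and ii) must send $\nchi_E\bar\nabla f$ to $\Pr(\nchi_E)\nabla f=\nchi_E\nabla f$, hence agrees with $\bar\Pr$ on simple $\Cap$-vector fields and therefore, by continuity and density, everywhere.

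I do not expect any serious obstacle here: the only genuinely delicate point is the well-posedness step, namely that the equivalence relation $\sim$ defining ${\sf Ptm}/\sim$ is \emph{finer} than the one implicit in $L^0_\mm(T\X)$, and this reduces cleanly to the compatibility $\Pr\circ\qcr=$ inclusion together with $\mm\ll\Cap$. One could instead obtain the $L^0(\mm)$-homogeneity in ii) abstractly from the universal property of Proposition~\ref{pr:pr}, once $\bar\Pr$ is constructed and \eqref{eq:preserves_bar_Pi} is in hand, but the direct check is equally short.
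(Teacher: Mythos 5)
Your proposal is correct and follows essentially the same route as the paper: define the operator on the dense class of simple $\Cap$-vector fields by the formula forced by i) and ii), establish well-posedness and the norm identity $\big|\bar\Pr(v)\big|=\Pr\big(|v|\big)$ there (your direct argument via $\Pr\circ\qcr=$ inclusion and locality is the same computation as the paper's chain of $\mm$-a.e.\ equalities), then extend by continuity using $\mm\le\Cap$, and obtain ii) and \eqref{eq:preserves_bar_Pi} by approximation, with uniqueness coming from density. No gaps; your Cavalieri estimate just makes explicit the continuity step the paper leaves implicit.
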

\begin{proof}
Given a Borel partition $(E_n)_{n\in\N}$ of $\X$ and
$(v_n)_{n\in\N}\subseteq L^0_\Cap(T\X)$, we are forced to set
\begin{equation}\label{eq:def_bar_Pi}
\bar\Pr\bigg(\sum_{n\in\N}[\nchi_{E_n}]_\Cap\bar\nabla f_n\bigg)
:=\sum_{n\in\N}[\nchi_{E_n}]_\mm\nabla f_n.
\end{equation}
The well-posedness of such definition stems from the following $\mm$-a.e.\ equalities:
\begin{equation}\label{eq:equality_bar_Pi}\begin{split}
\bigg|\sum_{n\in\N}{[\nchi_{E_n}]}_\mm\nabla f_n\bigg|
&=\sum_{n\in\N}{[\nchi_{E_n}]}_\mm\,|Df_n|
=\sum_{n\in\N}\Pr\big({[\nchi_{E_n}]}_\Cap\big)\,\Pr\big(\qcr\big(|Df_n|\big)\big)\\
&=\Pr\bigg(\sum_{n\in\N}{[\nchi_{E_n}]}_\Cap\,\qcr\big(|Df_n|\big)\bigg)
=\Pr\bigg(\sum_{n\in\N}{[\nchi_{E_n}]}_\Cap\,|\bar\nabla f_n|\bigg)\\
&=\Pr\bigg(\Big|\sum_{n\in\N}{[\nchi_{E_n}]}_\Cap\bar\nabla f_n\Big|\bigg).
\end{split}\end{equation}
Moreover, we also infer that such map $\bar\Pr$ -- which is linear by construction --
is also continuous, whence it admits a unique linear and continuous
extension $\bar\Pr:\,L^0_\Cap(T\X)\to L^0_\mm(T\X)$. Property i)
is clearly satisfied by \eqref{eq:def_bar_Pi}. From the linearity of $\nabla$
and $\bar\nabla$, we deduce that property ii) holds for any
simple function $g\in L^0(\Cap)$, thus also for any $g\in L^0(\Cap)$ by approximation.
Finally, again by approximation we see that \eqref{eq:preserves_bar_Pi} follows
from \eqref{eq:equality_bar_Pi}.
\end{proof}
The fact that $L^0_\Cap(T\X)$ can be thought of as a natural `refinement' of the already known $L^0_\mm(T\X)$ is now encoded in the following proposition, which shows that $\big(L^0_\mm(T\X),\bar\Pr\big)$ is the canonical quotient of $L^0_\Cap(T\X)$ up to $\mm$-a.e.\ equality (recall Proposition  \ref{pr:pr}):
\begin{proposition}\label{pr:prtan2}
Let $\mathscr N_\mm$ be a $L^0(\mm)$-module and $T:L^0_\Cap(T\X)\to \mathscr N_\mm$ linear and such that
\begin{equation}
\label{eq:ipT2}
\big|T(v)\big|\leq\Pr\big(|v|\big)\;\;\;\mm\text{-a.e.}\quad\text{ for every }
v\in L^0_\Cap(T\X).
\end{equation}
Then there is a unique $L^0(\mm)$-linear and continuous map $S:L^0_\mm(T\X)\to \mathscr N_\mm$ such that the diagram
\begin{equation}
\label{eq:diag2}
\begin{tikzpicture}[node distance=2.5cm, auto]
  \node (A) {${\rm Test}(\X)$};
  \node (B) [right of=A] {$L^0_\Cap(T\X)$};
  \node (C) [below  of=A] {$L^0_\mm(T\X)$};
  \node (D) [below  of=B] {$\mathscr N_\mm$};
  \draw[->] (A) to node {$\bar\nabla $} (B);
  \draw[->] (A) to node [swap] {$\nabla$} (C);
  \draw[->] (B) to node {$T$} (D);
  \draw[->] (C) to node [swap] {$S$} (D);
\end{tikzpicture}
\end{equation}
commutes.
\end{proposition}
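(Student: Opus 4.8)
The statement says precisely that the pair $\big(L^0_\mm(T\X),\bar\Pr\big)$ plays, with respect to $L^0_\Cap(T\X)$, the same role that Proposition \ref{pr:pr} assigns to the canonical quotient $\big(\mathscr M_\mm,\Pr_{\mathscr M}\big)$ of a general $L^0(\Cap)$-module $\mathscr M$. The plan is therefore: (1) show that $L^0_\mm(T\X)$ \emph{is} (canonically isomorphic to) that quotient, via the operator $\bar\Pr$ of Proposition \ref{pr:prtan}; (2) transport the universal property of Proposition \ref{pr:pr} along this isomorphism.

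For step (1), apply Proposition \ref{pr:pr} to $\mathscr M:=L^0_\Cap(T\X)$, $\mathscr N_\mm:=L^0_\mm(T\X)$ and $T:=\bar\Pr$; hypothesis \eqref{eq:ipT} holds because \eqref{eq:preserves_bar_Pi} gives even $\big|\bar\Pr(v)\big|=\Pr\big(|v|\big)$. This produces a unique $L^0(\mm)$-linear continuous $\Psi:=\bar\Pr_{\rm Pr}:\mathscr M_\mm\to L^0_\mm(T\X)$ with $\Psi\circ\Pr_{\mathscr M}=\bar\Pr$. By the same equality \eqref{eq:preserves_bar_Pi} and the definition of the pointwise norm on $\mathscr M_\mm$, the map $\Psi$ preserves the pointwise norm, hence is injective and (being then a distance isometry from a complete space) has closed image; on the other hand its image is a submodule containing $\bar\Pr(\bar\nabla f)=\nabla f$ for every $f\in{\rm Test}(\X)$, and $\{\nabla f:f\in{\rm Test}(\X)\}$ generates $L^0_\mm(T\X)$ (test functions being $W^{1,2}$-dense, $\nabla$ continuous, and $\{\nabla f:f\in W^{1,2}(\X)\}$ generating the tangent module), so the image is also dense. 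Therefore $\Psi$ is an isomorphism of $L^0(\mm)$-modules.

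For step (2), given $\mathscr N_\mm$ and $T$ as in the statement, Proposition \ref{pr:pr} yields a unique $L^0(\mm)$-linear continuous $T_{\rm Pr}:\mathscr M_\mm\to\mathscr N_\mm$ with $T_{\rm Pr}\circ\Pr_{\mathscr M}=T$, and I would set $S:=T_{\rm Pr}\circ\Psi^{-1}$. Then $S$ is $L^0(\mm)$-linear and continuous, and $S\circ\nabla=T_{\rm Pr}\circ\Psi^{-1}\circ\bar\Pr\circ\bar\nabla=T_{\rm Pr}\circ\Pr_{\mathscr M}\circ\bar\nabla=T\circ\bar\nabla$ (using $\bar\Pr\circ\bar\nabla=\nabla$ and $\Psi^{-1}\circ\bar\Pr=\Pr_{\mathscr M}$), so the diagram \eqref{eq:diag2} commutes. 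Uniqueness of $S$ follows since any two $L^0(\mm)$-linear continuous maps on $L^0_\mm(T\X)$ that agree on $\{\nabla f:f\in{\rm Test}(\X)\}$ agree on the dense subspace of simple vector fields $\sum_n[\nchi_{E_n}]_\mm\nabla f_n$, hence everywhere.

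I expect the only point really needing care to be the claim that $\{\nabla f:f\in{\rm Test}(\X)\}$ generates $L^0_\mm(T\X)$ (used both for surjectivity of $\Psi$ and for uniqueness of $S$); this is standard but should be recorded. Everything else is the formal ``quotient $+$ transport'' bookkeeping. Alternatively, one can bypass $\mathscr M_\mm$ altogether and define $S$ directly on simple $\mm$-vector fields by $S\big(\sum_n[\nchi_{E_n}]_\mm\nabla f_n\big):=\sum_n[\nchi_{E_n}]_\mm\,T(\bar\nabla f_n)=T\big(\sum_n[\nchi_{E_n}]_\Cap\bar\nabla f_n\big)$, the only genuine issue then being well-posedness (which follows from the locality of $|D\cdot|$, the identity $T([\nchi_E]_\Cap v)=[\nchi_E]_\mm\,T(v)$ established inside the proof of Proposition \ref{pr:pr}, and \eqref{eq:ipT2}), while the bound $\big|S(\cdot)\big|\le|\cdot|$ and the extension step are immediate; this is the route mirroring the existence proof of Proposition \ref{pr:prtan}.
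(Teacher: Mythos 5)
Your proposal is correct, and your primary argument takes a genuinely different route from the paper's. The paper proves the statement directly and does not even invoke $\bar\Pr$: from \eqref{eq:ipT2} it extracts the locality property \eqref{eq:comp} (if $\big|D(f-g)\big|=0$ $\mm$-a.e.\ on $E$ then $T(\bar\nabla f)=T(\bar\nabla g)$ $\mm$-a.e.\ on $E$), uses it to check that $S\big(\sum_i[\nchi_{E_i}]_\mm\nabla f_i\big):=\sum_i[\nchi_{E_i}]_\mm\,T(\bar\nabla f_i)$ is well posed on the space $V$ of finite such sums, notes $\big|S(v)\big|\le|v|$ $\mm$-a.e.\ there, and extends by continuity using density of $V$ in $L^0_\mm(T\X)$, obtaining $L^0(\mm)$-linearity as in the proof of Proposition \ref{pr:pr}; this is precisely the alternative you sketch at the end. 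Your main route instead identifies $\big(L^0_\mm(T\X),\bar\Pr\big)$ with the canonical quotient $\big(\mathscr M_\mm,\Pr_{\mathscr M}\big)$ of $L^0_\Cap(T\X)$ by applying Proposition \ref{pr:pr} to $\bar\Pr$ (legitimate thanks to \eqref{eq:preserves_bar_Pi} and Proposition \ref{pr:prtan}) and then transports the universal property. What this buys is conceptual economy: it makes explicit the claim, stated in the text preceding the proposition, that $\big(L^0_\mm(T\X),\bar\Pr\big)$ \emph{is} the canonical quotient, and it reuses the well-posedness work already done in Proposition \ref{pr:pr} instead of redoing it. What it costs are two inputs: (a) that $\big\{\nabla f\,:\,f\in{\rm Test}(\X)\big\}$ generates $L^0_\mm(T\X)$, which you rightly flag and which the paper also uses (density of $V$) without proof -- it follows, as you say, from $W^{1,2}$-density of test functions, continuity of $\nabla$, and the fact that gradients of Sobolev functions generate the tangent module; and (b) completeness of the abstract quotient $\mathscr M_\mm$, needed for your closed-image argument giving surjectivity of $\Psi$, which is covered by the paper's assertion that $\mathscr M_\mm$ is an $L^0(\mm)$-module. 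Since both inputs are available in the paper, your proof is complete; the direct construction remains slightly more self-contained, while yours is the cleaner ``quotient plus transport'' formulation.
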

\begin{proof} 
By \eqref{eq:ipT2} it follows that
$\big|T(\bar\nabla f)-T(\bar\nabla g)\big|\leq\big|D(f-g)\big|$ holds $\mm$-a.e.\ and thus
\begin{equation}\label{eq:comp}
f,g\in {\rm Test}(\X),\ \big|D(f-g)\big|=0\;\;\;\mm\text{-a.e.\ on }E\subset\X\quad\Longrightarrow\quad T(\bar\nabla f)=T(\bar\nabla g)\;\;\;\mm\text{-a.e.\ on }E.
\end{equation}
Now let $V\subset L^0_\mm(T\X)$ be the space of finite sums of the form $\sum_i[\nchi_{E_i}]_\mm\nabla f_i$ for $(E_i)_i$ Borel subsets of $\X$ and $(f_i)_i\subset {\rm Test}(\X)$, and define $S:V\to\mathscr N_\mm$ as:
\[
S(v):=\sum_i[\nchi_{E_i}]_\mm\,T(\bar\nabla f_i)\in\mathscr N_\mm\quad\text{ for every }
v=\sum_i[\nchi_{E_i}]_\mm\nabla f_i\in V.
\]
The implication in \eqref{eq:comp} grants that this is a good definition, i.e.\ the value of $S(v)$ depends only on $v$ and not on how it is written as finite sum of the form $\sum_i[\nchi_{E_i}]_\mm\nabla f_i$. It is clear that $S$ is linear and that, by \eqref{eq:ipT2} and item i) of Theorem \ref{thm:tg_mod}, it holds 
\begin{equation}
\label{eq:boundS}
\big|S(v)\big|\leq|v|\;\;\;\mm\text{-a.e.}\quad\text{ for every }v\in V.
\end{equation}
In particular, $S$ is 1-Lipschitz from $V$ (with the $L^0_\mm(T\X)$-distance) to $\mathscr N_\mm$. Since $V$ is dense in $L^0_\mm(T\X)$, $S$  can be uniquely extended to a continuous map -- still denoted by $S$ -- from $L^0_\mm(T\X)$ to $\mathscr N_\mm$. Clearly such extension is linear and, by \eqref{eq:boundS}, it also satisfies $S\big([\nchi_{E}]_\mm v\big)=[\nchi_{E}]_\mm S(v)$ (e.g.\ by mimicking the arguments used in the proof of Proposition \ref{pr:pr}). These two facts easily imply $L^0(\mm)$-linearity, thus showing existence of the desired map $S$. For uniqueness simply notice that the value of $S$ on the dense subspace $V$ of $L^0_\mm(T\X)$ is forced by the commutativity of the diagram in \eqref{eq:diag2}.
\end{proof}

\subsection{Quasi-continuity of Sobolev vector fields on
\texorpdfstring{$\sf RCD$}{RCD} spaces}\label{ss:QC_Sob_vector_fields}
Let $(\X,\sfd,\mm)$ be an ${\sf RCD}(K,\infty)$ space, for some $K\in\R$.
The aim of this conclusive subsection is to prove that any element of the
space $H^{1,2}_C(T\X)$ admits a quasi-continuous representative, in a
suitable sense. We begin with the definition of quasi-continuous vector field on $\X$:
\begin{definition}[Quasi-continuity for vector fields]
We define the set ${\rm Test\bar V}(\X)\subseteq L^0_\Cap(T\X)$ as
\begin{equation}
{\rm Test\bar V}(\X):=\bigg\{\sum_{i=0}^n\qcr(g_i)\,\bar\nabla f_i\;\bigg|
\;n\in\N,\,(f_i)_{i=1}^n,(g_i)_{i=1}^n\subseteq{\rm Test}(\X)\bigg\}.
\end{equation}
Then the space $\Cqcvf$ of \emph{quasi-continuous vector fields} on $\X$
is defined as the $\sfd_{L^0_\Cap(T\X)}$-closure of ${\rm Test\bar V}(\X)$
in $L^0_\Cap(T\X)$. It clearly holds that $\Cqcvf$ is a vector subspace
of $L^0_\Cap(T\X)$.
\end{definition}
\begin{proposition}\label{lem:|v|_qc}
Let $v\in\Cqcvf$ be given. Then it holds that $|v|\in\Cqc$.
\end{proposition}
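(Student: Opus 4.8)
The plan is to prove the claim first for $v$ in the generating set ${\rm Test\bar V}(\X)$ and then extend to all of $\Cqcvf$ by approximation. The extension step is painless once the special case is settled: from the triangle inequality for the pointwise norm (item iii) of Definition \ref{def:n_Cap-m}) one gets $\big||v|-|w|\big|\le|v-w|$ $\Cap$-a.e., hence $\sfd_\Cap\big(|v|,|w|\big)\le\sfd_{L^0_\Cap(T\X)}(v,w)$, so the map $v\mapsto|v|$ from $L^0_\Cap(T\X)$ to $L^0(\Cap)$ is continuous. Thus if $v\in\Cqcvf$ is the $L^0_\Cap(T\X)$-limit of a sequence $(v_n)_n\subseteq{\rm Test\bar V}(\X)$ with $|v_n|\in\Cqc$, then $|v_n|\to|v|$ in $L^0(\Cap)$, and since $\Cqc$ has closed image in $L^0(\Cap)$ by Proposition \ref{prop:QC_complete}, we conclude $|v|\in\Cqc$.

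For the special case, fix $v=\sum_{i=1}^n\qcr(g_i)\,\bar\nabla f_i\in{\rm Test\bar V}(\X)$. Since $L^0_\Cap(T\X)$ is a Hilbert module, I would use the associated pointwise scalar product and its $L^0(\Cap)$-bilinearity to write
\[
|v|^2=\la v,v\ra=\sum_{i,j=1}^n\qcr(g_i)\,\qcr(g_j)\,\la\bar\nabla f_i,\bar\nabla f_j\ra\qquad\Cap\text{-a.e.}
\]
The heart of the matter is that $\la\bar\nabla f,\bar\nabla g\ra\in\Cqc$ for all $f,g\in{\rm Test}(\X)$: by the polarisation identity and linearity of $\bar\nabla$ one has $2\,\la\bar\nabla f,\bar\nabla g\ra=|\bar\nabla(f+g)|^2-|\bar\nabla f|^2-|\bar\nabla g|^2$, and by item i) of Theorem \ref{thm:tg_mod} each summand on the right is the square of a function of the form $\qcr(|Dh|)\in\Cqc$; as $\Cqc$ is an algebra, $\la\bar\nabla f,\bar\nabla g\ra\in\Cqc$. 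Combining this with $\qcr(g_i)\in\Cqc$ and the algebra property of $\Cqc$ again, the displayed formula gives $|v|^2\in\Cqc$. Finally, quasi-continuity is stable under postcomposition with the continuous map $t\mapsto\sqrt t$ on $[0,\infty)$ (on any closed set where a nonnegative Borel representative of $|v|^2$ is continuous, its square root is continuous), so $|v|=\sqrt{|v|^2}\in\Cqc$, which finishes the special case.

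The only genuinely non-formal point is the step $\la\bar\nabla f,\bar\nabla g\ra\in\Cqc$, and even that reduces mechanically to three already-available facts: that $L^0_\Cap(T\X)$ is Hilbert, that $\bar\nabla$ is linear with $|\bar\nabla h|=\qcr(|Dh|)$, and that $\Cqc$ is a closed subalgebra of $L^0(\Cap)$. If one prefers to sidestep the square root, one can instead argue directly that $|v_n|^2\to|v|^2$ in $L^0(\Cap)$ with $|v|^2\in\Cqc$ by the same bilinear expansion, and only then take square roots at the very end; the two routes are equivalent.
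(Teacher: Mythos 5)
Your proposal is correct and takes essentially the same route as the paper: expand $|v|^2$ for $v\in{\rm Test\bar V}(\X)$ via the $L^0(\Cap)$-bilinear pointwise scalar product and the polarisation identity, use that each $|\bar\nabla h|=\qcr(|Dh|)$ lies in the algebra $\Cqc$, and then pass to general $v\in\Cqcvf$ by approximation using the closedness of $\Cqc$ in $L^0(\Cap)$. The only difference is that you spell out two steps the paper leaves implicit, namely the $1$-Lipschitz estimate $\big||v|-|w|\big|\le|v-w|$ giving continuity of $v\mapsto|v|$, and the passage from $|v|^2\in\Cqc$ to $|v|\in\Cqc$ via the square root.
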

\begin{proof}
First of all, if $v=\sum_{i=0}^n\qcr(g_i)\,\bar\nabla f_i\in{\rm Test\bar V}(\X)$ then
\[\begin{split}
|v|^2&=\sum_{i,j=0}^n\qcr(g_i)\,\qcr(g_j)\,\la\bar\nabla f_i,\bar\nabla f_j\ra\\
&=\sum_{i,j=0}^n\qcr(g_i)\,\qcr(g_j)\,
\frac{\big|\bar\nabla(f_i+f_j)\big|^2-|\bar\nabla f_i|^2-|\bar\nabla f_j|^2}{2}\\
&=\sum_{i,j=0}^n\qcr(g_i)\,\qcr(g_j)\,
\frac{\qcr\big(|D(f_i+f_j)|^2\big)-\qcr\big(|Df_i|^2\big)-\qcr\big(|Df_j|^2\big)}{2}
\in\Cqc.
\end{split}\]
For general $v\in\Cqcvf$ we proceed by approximation: chosen any sequence
$(v_n)_n\subset{\rm Test\bar V}(\X)$ such that $\sfd_{L^0_\Cap(T\X)}(v_n,v)\to 0$,
or equivalently $\sfd_\Cap\big(|v_n-v|,0\big)\to 0$, we have that $|v_n|\to|v|$
with respect to $\sfd_\Cap$, whence accordingly $|v|\in\Cqc$ by
Proposition \ref{prop:QC_complete}.
\end{proof}
\begin{proposition}\label{prop:inj_bar_Pi}
It holds that the map
$\bar\Pr\restr\Cqcvf:\,\Cqcvf\to L^0_\mm(T\X)$ is injective.
\end{proposition}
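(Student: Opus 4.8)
Since $\bar\Pr$ is linear, it suffices to show that its restriction to $\Cqcvf$ has trivial kernel: if $v\in\Cqcvf$ satisfies $\bar\Pr(v)=0$, then $v=0$ in $L^0_\Cap(T\X)$. The whole argument is a combination of three facts already available: the norm-preservation identity \eqref{eq:preserves_bar_Pi} for $\bar\Pr$, the fact that the pointwise norm of a quasi-continuous vector field is quasi-continuous (Proposition \ref{lem:|v|_qc}), and the injectivity of $\Pr$ on $\Cqc$ (Proposition \ref{prop:eqmmimplieseqCap}).

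First I would fix $v\in\Cqcvf$ with $\bar\Pr(v)=0$ and apply \eqref{eq:preserves_bar_Pi} to get
\[
\Pr\big(|v|\big)=\big|\bar\Pr(v)\big|=0\quad\mm\text{-a.e.\ in }\X,
\]
that is, the function $|v|\in L^0(\Cap)$ vanishes $\mm$-a.e. Next, by Proposition \ref{lem:|v|_qc} we know that $|v|\in\Cqc$, i.e.\ $|v|$ admits a quasi-continuous representative. Since the zero function is trivially quasi-continuous and agrees with $|v|$ $\mm$-a.e., Proposition \ref{prop:eqmmimplieseqCap} (uniqueness of the quasi-continuous representative, equivalently injectivity of $\Pr\restr{\Cqc}$) forces $|v|=0$ $\Cap$-a.e.\ on $\X$. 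Finally, item iii) of Definition \ref{def:n_Cap-m} gives that $|v|=0$ $\Cap$-a.e.\ implies $v=0$ in $L^0_\Cap(T\X)$, which is what we wanted.

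I do not expect a genuine obstacle here: the statement is essentially the vector-field analogue of Proposition \ref{prop:eqmmimplieseqCap}, and the only ingredient beyond the scalar case is the passage from $v$ to $|v|$, which is precisely what Proposition \ref{lem:|v|_qc} provides. The one point to keep in mind is that the conclusion $v=0$ in the module is inferred from the vanishing of the pointwise norm via the nondegeneracy axiom of the $L^0(\Cap)$-norm, so it is worth stating that step explicitly rather than treating it as automatic.
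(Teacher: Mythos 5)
Your argument is correct and is essentially the paper's own proof: the paper likewise reduces to showing $\Pr\big(|v-w|\big)=0$ $\mm$-a.e.\ via \eqref{eq:preserves_bar_Pi} and then applies Proposition \ref{prop:eqmmimplieseqCap} (with the quasi-continuity of the pointwise norm coming from Proposition \ref{lem:|v|_qc}), the only cosmetic difference being that you phrase it as triviality of the kernel while the paper works directly with two elements $v,w$ having equal image. Your explicit mention of the nondegeneracy axiom to pass from $|v|=0$ $\Cap$-a.e.\ to $v=0$ is a fair point of care, though the paper treats it as immediate.
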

\begin{proof}
Let $v,w\in\Cqcvf$ be such that $\bar\Pr(v)=\bar\Pr(w)$.
In other words, we have that
\[\Pr\big(|v-w|\big)\overset{\eqref{eq:preserves_bar_Pi}}=
\big|\bar\Pr(v-w)\big|=0\quad\text{ holds }\mm\text{-a.e.\ in }\X,\]
whence Proposition \ref{prop:eqmmimplieseqCap} grants that
$|v-w|=0$ holds $\Cap$-a.e.\ in $\X$. This shows that $v=w$,
thus proving the claim.
\end{proof}

We are ready to state and prove the main result of the paper:
any element of $H^{1,2}_C(T\X)$ admits a quasi-continuous
representative in $\Cqcvf$.
This is a generalisation of Theorem \ref{thm:qcrepresentative}
to vector fields over an $\sf RCD$ space.
\begin{theorem}[Quasi-continuous representative of a Sobolev vector field]\label{thm:qcrvf}
Let us fix any ${\sf RCD}(K,\infty)$ space $(\X,\sfd,\mm)$, for some $K\in\R$.
Then there exists a unique map
\begin{equation}
\bar\qcr:\,H^{1,2}_C(T\X)\longrightarrow\Cqcvf
\end{equation}
such that $\bar\Pr\circ\bar\qcr:\,H^{1,2}_C(T\X)\to L^0_\mm(T\X)$
coincides with the inclusion $H^{1,2}_C(T\X)\subset L^0_\mm(T\X)$.
Moreover, $\bar\qcr$ is linear and $\big|\bar\qcr(v)\big|=\qcr\big(|v|\big)$
holds for every $v\in H^{1,2}_C(T\X)$.
\end{theorem}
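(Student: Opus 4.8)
The proof will closely parallel that of Theorem~\ref{thm:qcrepresentative}: the plan is to define $\bar\qcr$ first on the dense subspace ${\rm TestV}(\X)\subseteq H^{1,2}_C(T\X)$, to check there that it is well-posed, linear and H\"older continuous, and then to extend it by density, using that $\Cqcvf$ is a closed (hence complete) subspace of $L^0_\Cap(T\X)$. Uniqueness is immediate: any $\bar\qcr$ with $\bar\Pr\circ\bar\qcr$ equal to the inclusion $H^{1,2}_C(T\X)\subset L^0_\mm(T\X)$ is forced, since $\bar\Pr\restr{\Cqcvf}$ is injective by Proposition~\ref{prop:inj_bar_Pi}.

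First I would set, for $v=\sum_{i=1}^n g_i\,\nabla f_i$ with $f_i,g_i\in{\rm Test}(\X)$,
\[
\bar\qcr(v):=\sum_{i=1}^n\qcr(g_i)\,\bar\nabla f_i\in{\rm Test\bar V}(\X)\subseteq\Cqcvf.
\]
To see this is well-posed, suppose $v=\sum_i g_i\,\nabla f_i=\sum_j h_j\,\nabla e_j$; then the difference $w$ of the two candidate images belongs to $\Cqcvf$ and, by properties i)--ii) of Proposition~\ref{pr:prtan} and the fact that $\Pr\big(\qcr(h)\big)=h$ for every $h\in W^{1,2}(\X)$, satisfies $\bar\Pr(w)=\sum_i g_i\,\nabla f_i-\sum_j h_j\,\nabla e_j=0$; hence $w=0$ by Proposition~\ref{prop:inj_bar_Pi}. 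The same remark shows that $\bar\qcr$ is linear on ${\rm TestV}(\X)$ and that $\bar\Pr\big(\bar\qcr(v)\big)=v$ there. Next I would record the identity $|\bar\qcr(v)|=\qcr(|v|)$ for $v\in{\rm TestV}(\X)$: indeed $|v|\in W^{1,2}(\X)$ by Lemma~\ref{lem:|v|_Sobolev}, so $\qcr(|v|)\in\Cqc$, while $|\bar\qcr(v)|\in\Cqc$ by Proposition~\ref{lem:|v|_qc}; and combining \eqref{eq:preserves_bar_Pi} with $\bar\Pr(\bar\qcr(v))=v$ yields $\Pr\big(|\bar\qcr(v)|\big)=|v|=\Pr\big(\qcr(|v|)\big)$ $\mm$-a.e., so the equality holds $\Cap$-a.e.\ by the injectivity of $\Pr\restr{\Cqc}$ (Proposition~\ref{prop:eqmmimplieseqCap}).

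With this identity in hand, for $v,v'\in{\rm TestV}(\X)$ one estimates
\begin{align*}
\sfd_{L^0_\Cap(T\X)}\big(\bar\qcr(v),\bar\qcr(v')\big)
&=\sfd_\Cap\big(\qcr(|v-v'|),0\big)\le\sfd_\qu\big(\qcr(|v-v'|),0\big)\\
&\le 3\,\big\||v-v'|\big\|_{W^{1,2}(\X)}^{2/3}\le 3\,\|v-v'\|_{W^{1,2}_C(T\X)}^{2/3},
\end{align*}
where the first inequality is Proposition~\ref{prop:QC_complete}(ii), the second follows from Proposition~\ref{prop:linkqusob} together with the continuity of $\qcr$ from Theorem~\ref{thm:qcrepresentative}, and the last uses $\big|D|w|\big|\le|\nabla w|_{\sf HS}$ of Lemma~\ref{lem:|v|_Sobolev}. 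Since ${\rm TestV}(\X)$ is dense in $H^{1,2}_C(T\X)$, this uniformly continuous map extends uniquely to a continuous linear map $\bar\qcr\colon H^{1,2}_C(T\X)\to\Cqcvf$. That $\bar\Pr\circ\bar\qcr$ coincides with the inclusion on all of $H^{1,2}_C(T\X)$ then follows because both are continuous maps into $L^0_\mm(T\X)$ (for the inclusion, $W^{1,2}_C$-convergence implies $L^0_\mm(T\X)$-convergence) agreeing on the dense set ${\rm TestV}(\X)$. Finally, for the identity $|\bar\qcr(v)|=\qcr(|v|)$ in general, I would take $v_n\to v$ in $W^{1,2}_C(T\X)$ with $v_n\in{\rm TestV}(\X)$; then $|\bar\qcr(v_n)|\to|\bar\qcr(v)|$ in $L^0(\Cap)$, while $|v_n-v|\to 0$ in $W^{1,2}(\X)$ by Lemma~\ref{lem:|v|_Sobolev}, hence --- using linearity of $\qcr$, \eqref{eq:normqcr}, and the monotonicity of $\qcr$ (if $h\ge 0$ $\mm$-a.e.\ then $\qcr(h)=\qcr(|h|)=|\qcr(h)|\ge 0$ $\Cap$-a.e.) --- one obtains $\big|\qcr(|v_n|)-\qcr(|v|)\big|=\qcr\big(\big|\,|v_n|-|v|\,\big|\big)\le\qcr(|v_n-v|)\to 0$ in $L^0(\Cap)$; passing to the limit in $|\bar\qcr(v_n)|=\qcr(|v_n|)$ gives the claim.

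I expect the two genuinely substantive steps to be: the well-posedness of $\bar\qcr$ on ${\rm TestV}(\X)$, which rests entirely on the injectivity statement of Proposition~\ref{prop:inj_bar_Pi}; and the passage of the pointwise-norm identity from test vector fields to arbitrary elements of $H^{1,2}_C(T\X)$, which needs the monotonicity of $\qcr$ together with the Sobolev bound of Lemma~\ref{lem:|v|_Sobolev} in order to control $|v_n-v|$ in $W^{1,2}(\X)$. Everything else is a routine density-and-continuity argument modelled on Theorem~\ref{thm:qcrepresentative}.
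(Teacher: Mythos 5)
Your proposal is correct and takes essentially the same route as the paper's proof: approximation by test vector fields, the $W^{1,2}$-control of $|v_n-v|$ from Lemma \ref{lem:|v|_Sobolev}, the H\"older-type estimate coming from Proposition \ref{prop:linkqusob} (extended to all Sobolev functions via the continuity of $\qcr$), and the injectivity of $\bar\Pr\restr{\Cqcvf}$ from Proposition \ref{prop:inj_bar_Pi} for well-posedness and uniqueness. The only differences are cosmetic: you define $\bar\qcr$ explicitly on ${\rm TestV}(\X)$ and extract a uniform $2/3$-H\"older modulus before extending, where the paper runs a Cauchy-sequence argument through locally quasi-uniform convergence, and you prove the norm identity by a limit/monotonicity argument where the paper deduces it from the injectivity of the projection maps.
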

\begin{proof}
Fix $v\in H^{1,2}_C(T\X)$. Pick $(\bar v_n)_n\subseteq{\rm Test\bar V}(\X)$ such
that $v_n:=\bar\Pr(\bar v_n)\to v$ in $W^{1,2}_C(T\X)$.
We know from Lemma \ref{lem:|v|_Sobolev} that $|v_n-v|\in W^{1,2}(\X)$ and
$\big|D|v_n-v|\big|\leq\big|\nabla(v_n-v)\big|_{\sf HS}$ $\mm$-a.e.\ for all $n\in\N$,
thus accordingly $|v_n-v|\to 0$ in $W^{1,2}(\X)$ as $n\to\infty$.
Proposition \ref{prop:linkqusob} grants that -- up to a (not relabeled) subsequence --
we have that $\qcr\big(|v_n-v|\big)\to 0$ locally quasi-uniformly as $n\to\infty$,
whence $\qcr\big(|v_n-v_m|\big)\to 0$ locally quasi-uniformly as $n,m\to\infty$.
Thus Proposition \ref{prop:QC_complete} yields
\[\sfd_{L^0_\Cap(T\X)}(\bar v_n,\bar v_m)
=\sfd_\Cap\big(\qcr\big(|v_n-v_m|\big),0\big)\longrightarrow 0
\quad\text{ as }n,m\to\infty.\]
This shows that $(\bar v_n)_n\subseteq L^0_\Cap(T\X)$ is Cauchy,
thus it converges to some $\bar v\in L^0_\Cap(T\X)$. Hence one has
$\bar\Pr(\bar v)=\bar\Pr\big(\lim_n\bar v_n\big)=\lim_n\bar\Pr(\bar v_n)
=\lim_n v_n=v$, so that we define $\bar\qcr(v):=\bar v$.
Proposition \ref{prop:inj_bar_Pi} grants that the map $\bar\qcr:\,H^{1,2}_C(T\X)\to\Cqcvf$
is well-defined and is the unique map such that $\bar\Pr\circ\bar\qcr$ coincides
with the inclusion $H^{1,2}_C(T\X)\subset L^0_\mm(T\X)$. Finally, the last
two statements follow from linearity of $\bar\Pr$,
Theorem \ref{thm:qcrepresentative} and Proposition \ref{prop:inj_bar_Pi}.
\end{proof}
\begin{remark}{\rm From the defining property of $\bar\qcr$ and Propositions \ref{pr:prtan}, \ref{prop:inj_bar_Pi} we see that $\bar\qcr(\nabla f)=\bar\nabla f$ for every $f\in {\rm Test}(\X)$. Then it is easy to see that   $\bar\qcr\big({\rm TestV}(\X)\big)={\rm Test\bar V}(\X)$.
\fr}\end{remark}

\begin{remark}[Alternative notion of quasi-continuous vector field]{\rm
It is well-known that a vector field $v:\,\R^n\to\R^n$ in the Euclidean space
is quasi-continuous if and only if $\R^n\ni x\mapsto\big|v(x)-\nabla f(x)\big|$
is quasi-continuous for every smooth function $f:\,\R^n\to\R$.
This would suggest an alternative definition of quasi-continuous vector field
on the ${\sf RCD}(K,\infty)$ space $\X$:
\begin{equation}
\label{eq:qtt}
\tCqcvf:=\big\{v\in L^0_\Cap(T\X)\;\big|\;
|v-\bar\nabla f|\in\Cqc\text{ for every }f\in{\rm Test}(\X)\big\}.
\end{equation}
The well-posedness of the previous definition follows from
the fact that quasi-continuity is preserved under modification on $\Cap$-negligible sets.
As we are going to show, it holds that
\begin{equation}\label{eq:C_subset_tC}
\Cqcvf\subset\tCqcvf.
\end{equation}
In order to prove it, let us fix $v\in\Cqcvf$. Given any $f\in{\rm Test}(\X)$ and
$(v_n)_n\subset{\rm Test\bar V}(\X)$ such that $v_n\to v$ in $L^0_\Cap(T\X)$,
we see (by arguing as in the proof of Lemma \ref{lem:|v|_qc}) that
$|v_n-\bar\nabla f|\in\Cqc$ holds for every $n\in\N$, therefore also
$|v-\bar\nabla f|\in\Cqc$ as an immediate consequence of the fact that
$|v_n-\bar\nabla f|\to|v-\bar\nabla f|$ in $L^0(\Cap)$. Since $f\in{\rm Test}(\X)$
is arbitrary, the claim \eqref{eq:C_subset_tC} is proven.

Notice that due to the non-linearity of the defining condition \eqref{eq:qtt} it is not clear if $\tCqcvf$ is a vector space or not. In particular, it is not clear if the inclusion in \eqref{eq:C_subset_tC} can be strict.
\fr}\end{remark}

We conclude giving a simple and explicit example to which the definitions and constructions presented in the paper can be applied:

\begin{example}[The case $\X=  {[0,1]}$]\label{ex:1d}{\rm Let us see how the definitions we gave work in the case $\X$ is the Euclidean segment $[0,1]$ equipped with its natural distance and measure. It is well known and easy to check that in this space every singleton has positive capacity. It follows that the space $L^0(\Cap)$ coincides, as a set, with the space of all real valued Borel functions on $\X$ and similarly the space $\Cqc$ coincides with the space of continuous functions on $\X$. In particular, the quasi-continuous representative of a Sobolev function is, in fact, its continuous representative. 

A direct verification of the definitions then shows that for $f\in D(\Delta)\subset W^{1,2}(\X)$ we have $f'\in W^{1,2}(\X)$ as well and, identifying $f,f'$ with their continuous representatives, it also holds $f'(0)=f'(1)=0$. In particular, for any $f\in {\rm Test}(\X)$ we have that (the continuous representative of) $f'$ is continuous and equal to 0 in $\{0,1\}$.

We now claim that $L^0_\Cap(T\X)$  is (=can be identified with) the space of Borel functions on $[0,1]$ which are 0 on $\{0,1\}$, the corresponding gradient map $\bar\nabla$ being the one which assigns to any $f\in {\rm Test}(\X)$ the continuous representative of $f'$, which shall hereafter be denoted by $f'$. The verification of this claim follows from the above discussion and the uniqueness part of Theorem \ref{thm:tg_mod}.

It is then clear that $\Cqcvf$ consists of continuous elements in $L^0_\Cap(T\X)$, i.e.\ of continuous functions which are zero on $\{0,1\}$, and that $\Cqcvf$ coincides with the space $\tCqcvf$ introduced in the previous remark.

This simple example shows that:
\begin{itemize}
\item[a)] The constant dimension property of $\RCD(K,N)$ spaces recently obtained in \cite{BS18}, which is known to carry over to the `standard' tangent module $L^0_\mm(T\X)$, does not carry over to the module $L^0_\Cap(T\X)$ introduced in this manuscript: adapting the definitions in \cite{Gigli14}, one can see that in our example the dimension of $L^0_\Cap(T\X)$ over $\{0,1\}$ is 0 and over $(0,1)$ is 1.
\item[b)] The estimates obtained in \cite{NV17} from which one can deduce that the capacity of the critical set of solutions of elliptic PDEs is 0, do not carry over to the $\RCD$ setting, and in fact not even in the setting of non-collapsed $\RCD$ spaces. Indeed, in our example the critical set of any function on $\X$ whose Laplacian is also a function (and not a measure) contains $\{0,1\}$: the problem seems to be the presence of the `boundary', see also \cite{GDP17} for further comments about the definition of boundary of a $\ncRCD$ space.   
\end{itemize} \ 
}\fr\end{example}

\end{document}